\newtheorem{theorem}{Theorem}
\newtheorem{example}{Example}
\newtheorem{problem}{Problem}
\newtheorem{question}{Question}
\newtheorem{lemma}{Lemma}
\newtheorem{corollary}{Corollary}
\newtheorem{proposition}{Proposition}
\newtheorem{observation}{Observation}
\begin{document}

\title[Jet schemes, Newton polygons and continued fractions]{Jet schemes, Newton polygons and continued fractions}

\author[{G. Abdallah}]{Ghadi Abdallah}
\address{Lebanese University, Kalma, Hadath Campus, Lebanon}  
\email{ghadi.abdallah.1@st.ul.edu.lb}

\author[M. Leyton-Alvarez]{Maximiliano Leyton-\'Alvarez} %
\address{Instituto de Matem\'aticas, Universidad de Talca, Talca, Chile} %
\email{mleyton@utalca.cl}

 \author[B. Mourad]{Bassam Mourad}
\address{Lebanese University, Faculty of Science I, Hadath Campus, Beirut, Lebanon}  \email{bmourad@ul.edu.lb}

\author[H. Mourtada]{Hussein Mourtada}
\address{Universit\'e Paris Cit\'e, Sorbonne Universit\'e, CNRS, IMJ-PRG, F-75013 Paris, France }
\email{hussein.mourtada@imj-prg.fr}

\date{\today}

\thanks{The second author author was partially supported by Fondecyt project 1221535 and  Fondecyt Exploraci\'on project 13250049. The fourth author was partially supported by
the ANR-SINTROP}

\begin{abstract}
We study jet schemes of Newton non-degenerate plane curve singularities. We identify a subgraph of the graph of jet components and show that it can be constructed from walks on the lattice points in the first quadrant of the Cartesian plane. In particular, we determine all the irreducible components of the jet schemes. Furthermore, we prove that this subgraph encodes the embedded topological type of the curve singularity in the plane. Finally, we introduce a generating series defined in terms of the irreducible components of the jet schemes and their (co-)dimensions, and we prove that this series is rational and explicitly determines its poles.
\\ \\
\noindent\textbf{Keywords:} jet schemes, curve singularities, newton non-degenerate, generating series
\end{abstract}
\maketitle
\tableofcontents

\section{Introduction} \label{intro} 

The main objects studied in this article are the jet schemes of plane singular curves. For every integer $m \in \mathbf{N},$ the $m$-th jet scheme of a singular variety $X$ (here defined over an algebraically closed field $k$), denoted by $X_m,$ provides a finite-dimensional approximation of the arc space of $X.$ The arc space, denoted by $X_\infty,$ is the moduli space parametrizing arcs traced on $X;$ in other words, each point of $X_\infty$ corresponds to an arc on $X.$ In a certain sense, the geometry of $X_m$ is equivalent (up to a trivial fibration) to the geometry of the family of arcs lying in a smooth ambient space containing $X,$ whose "order of contact" with $X$ is greater than or equal to $m.$\\

Jet schemes and arc spaces are known to encode a great deal of information about singularities. Through jet schemes, one can detect rational complete intersection singularities and compute the log canonical threshold of a pair \cite{M1,M2}. They also lie at the heart of the mechanism of geometric motivic integration \cite{DL_mot}, and they allow for the definition of the Igusa motivic zeta function (the motivic analogue of the Igusa zeta function in the $p$-adic setting) \cite{DL_Igusa}. Although their structure is generally rather intricate, the guiding philosophy for studying them can be summarized as follows \cite{M_hand}:\\

\begin{center}
    
"Arc spaces and jet schemes can transform a difficult problem concerning
a relatively simple object into a relatively simple problem concerning a difficult object."
\end{center}

Their structure has been understood for several classes of singularities, \emph{e.g.}, \cite{M_eq, M_rdp, DN, M_ts, CM,MVV}. A distinctive feature in the study of jet schemes is that, in order to extract information about singularities, one must consider them in a dynamical way: letting $m$ vary and carefully analyzing how the geometry of the jet schemes changes with $m.$  This was one of the motivation  behind introducing the concept known as  \textit{the graph of jet components} \cite{M_eq,M_ts}. This is a leveled graph which for a variety $X$ is defined as follows:
Its vertices at the level $m$ correspond to the irreducible components of $X_m$  and an edge is drawn from a vertex $v'$ of order $m+1$ to a vertex $v$ of order $m$
whenever the component corresponding to $v'$ maps into the one corresponding to $v$ under the natural truncation map $X_{m+1}\longrightarrow X_m.$ It has been shown, for certain classes of singularities, that the data of this graph (in fact, already of a suitable subgraph) can completely determine the embedded topological type of a singularity \cite{M_eq, M_ts}, and in some cases even its analytical type \cite{CM}. Moreover, it is closely related to the problem of resolution of singularities and to the embedded Nash problem \cite{dFSh1, dFSh2, M_rdp, MP, KMPT, M_g, B_al, K}.

The main purpose of this article is to determine the irreducible components of the jet schemes of a Newton non-degenerate plane curve singularity, and to identify a subgraph of the graph of jet components that encodes the embedded topological type of the singularity. Recall that Newton non-degenerate plane curve singularities are those that can be resolved by a toric morphism whose target is the plane. In the case of an irreducible singularity, we show that this problem is related to the following problem.
\begin{problem}    
 Let $p,q$ be two coprime positive integers with $0<p<q.$ Denote by $L_{(p,q)}$ the ray emanating from the origin of the plane in the direction of the point $(p,q),$ i.e., the half-line passing through $(p,q)$ with slope $q/p.$  Let $\alpha\in L_{(p,q)}\cap \mathbf{N}^2_0$. Determine the walk in the plane traced by the broken line (i.e. union of line segments) which starts at the point $\alpha+(1,1)$ and, at each step, it moves towards $L_{(p,q)}$ until reaching it. 
\end{problem}
See Figure \ref{walk(2,3)} for a graphical intuition of the problem. We show that this walk (uniquely determined by the problem) can be described by a special type of continued fraction defined via iterated Euclidean divisions. The subgraph of the graph of jet components that we seek is “almost” precisely given by the walks described above (as $\alpha$ varies), where $p$ and $q$ are determined by the Newton polygon of the (irreducible) curve; the integers $p$ and $q$ are closely related to the Puiseux pairs. In the reducible case, the walk is slightly more involved, but we obtain an analogous result. In particular, we determine the irreducible components of all the jet schemes. In addition, we prove that the data of the graph of jet components determines the embedded topological type of the curve.\\

In the last part, for a plane curve singularity $\mathcal{C}$ as above, we consider the following generating series:

$$G=\sum_{K,m} u^{\text{codim}(K;\mathbf{A}^2_\infty)}\cdot v^m\in \mathbf{Z}[[u,v]]$$
where $m\geq 1$ and $K$ is an irreducible component in $\text{Cont}^m(\mathcal{C})^0\neq \varnothing.$ Here, $\text{Cont}^m(\mathcal{C})^0$ is very much related to the $(m-1)-$th jet scheme, see section 2 for more details. This series was considered first in the PhD thesis of the last author. 
We show that $G=G(\mathcal{C})$ is rational in $u,v$ (this means that $G\in \mathbf{Z}(u,v)$) and determine its poles. This series is very much related to other zeta functions associated with singularities \cite{DL_Igusa,LL,V} and their poles have a lot in common but may be different.\\

The article is organized as follows:

\begin{itemize}
	\item In Section \ref{sec:jets}, we define the jet schemes and arc spaces of a variety over an algebraically closed field $k$. The arc spaces, will in fact, be used in computing the irreducible components of the jet schemes (over the origin) of a Newton non-degenerate plane curve singularity.
	\item In Section \ref{stair}, we will describe a staircase algorithm that will determine the irreducible components of these jet schemes.
	\item Sections \ref{sec : newton-br} and \ref{sec:newton} are dedicated to defining Newton non-degenerate branches and curves in the plane $\mathbf{C}^2$.
	\item In Section \ref{sec:crucial}, we prove some auxiliary results that will be used in showing the main theorems in Section \ref{sec:theorem}. 
	\item Finally, in Section \ref{sec:sum}, we compute the following invariant: the generating series associated to the jet schemes over the origin of a Newton non-degenerate plane curve singularity. Then, we show that it is rational and we determine its "poles" as well. 
\end{itemize}

\section{Jet Schemes and Arc Spaces} \label{sec:jets} 
In this section, we introduce the jet schemes and arc space of a variety. Denote by $k$ an algebraically closed field. Denote by $\mathbf{Var}_k$ the category of varieties over $k$ and $\textbf{Set}$ that of sets. Let $X\rightarrow Spec(k)$ be such a variety. For $m\in \mathbf{N}_0$, denote by $\mathcal{X}_m:\mathbf{Var}_k\rightarrow \mathbf{Set}$, the functor defined by
$$\mathcal{X}_m(Z)=\mathbf{Var}_k[Z\times_k Spec(k[t]/\langle t^{m+1}\rangle), X].$$
This functor is representable \cite{Voj} by a variety $X_m$ over $k$ called the \emph{jet scheme} at the level $m$ of $X$. For $m\geq n$, there is a  morphism $\pi_{m,n}:X_m\rightarrow X_n$ called the \emph{truncation morphism} that is induced by the natural map of rings
$$k[t]/\langle t^{n+1}\rangle \rightarrow k[t]/\langle t^{m+1}\rangle.$$ 
These morphisms verify: $\pi_{m,n}\circ \pi_{n,p} = \pi_{m,p}$ for $m\geq n\geq p$.
The \emph{arc space} $X_\infty$ of $X$ is the limit object in $\textbf{Var}_k$ of the jet schemes $X_m$ along the truncation morphisms $\pi_{m,n}:X_m\rightarrow X_n$ for $m\geq n$. To illustrate these notions, we take the following example.

\begin{example}
\normalfont{In order to find the jet schemes of an affine hypersurfaces in $k^n=\mathbf{A}^n$ (using the arc space $\mathbf{A}^n_\infty$), we set $\mathcal{O}_{k^n}=k[\mathbf{x}]$ : $\mathbf{x}=(x_1,..., x_n)$ the ring of the affine space $k^n$ and we let $f \in \mathcal{O}_{k^n}$ define a radical ideal. Define $X = \{ f = 0\}\subset k^n$ to be the reduced algebraic set whose ring of functions is 
$$\mathcal{O}_{X} = \mathcal{O}_{k^n}/{\langle f \rangle}.$$ 
The ring of the arc space of $k^n$ is $\mathcal{O}_{\mathbf{A}^n_\infty}=k[\mathbf{x}^{(0)},\mathbf{x}^{(1)}, \mathbf{x}^{(2)},...]$  with variables $\mathbf{x}^{(j)} = (x^{(j)}_1, ... , x^{(j)}_n)$. Next, we introduce the formal power series: 
$$x_i(t) = \sum_{j=0}^\infty x_i^{(j)} t^j \in  \mathcal{O}_{\mathbf{A}^n_\infty}[[t]].$$ 
Then, we expand $f[x_1(t),..., x_n(t)]=\sum_{m\geq 0} f^{(m)}t^m$.
Now we define $F_m$ to be the algebraic set in $\mathbf{A}^n_\infty$
$$F_m=\{f^{(j)}=0\}_{ 0\leq j\leq m} \subset \mathbf{A}^n_\infty.$$
Setting $\mathbf{A}^n_m=\{\mathbf{x}^{(j)}=0\}_{j>m} \subset \mathbf{A}^n_\infty$, then the \emph{jet scheme} of $X$ at the level $m$ is $X_m=\mathbf{A}^n_m\cap F_m\subset \mathbf{A}^n_m$.}
\end{example}

 In this paper, we take $n=2$, $x=x_1$, $y=x_2$ and every chosen $f\in \mathcal{O}_{k^2}$ is reduced (i.e. with no square factors). We are mainly interested in the jet schemes at each level $m$ of $\mathcal{C}=\{f=0\}$ over the origin of $k^2$ i.e. $$\mathcal{C}^0_m = \mathbf{A}^2_m\cap F_m \cap \{ x^{(0)} = y^{(0)} = 0\} \subset \mathbf{A}^2_m.$$ 
For this reason, we shall introduce $\mathcal{H}^\alpha_\infty$ for $\alpha\in \mathbf{N}^2$,  to be 
$$\mathcal{H}^\alpha_\infty = \{x^{(i)}=y^{(j)}=0 : 0\leq i<\alpha_1, 0\leq j<\alpha_2\}\subset \mathbf{A}^2_\infty.$$
 Also, we shall denote $F^\alpha_m=\{x^{(\alpha_1)}\neq 0\} \cap \{y^{(\alpha_2)}\neq 0\}\cap F_m\cap \mathcal{H}^\alpha_\infty \subset \mathbf{A}^2_\infty$ and 
 $$\mathcal{H}^\alpha_m=\mathcal{H}^\alpha_\infty \cap \mathbf{A}^2_m\subset \mathbf{A}^2_m.$$
 
\noindent Moreover, the contact locus 
$\text{Cont}^m(\mathcal{C})^0\subset \mathbf{A}^2_\infty$ is defined as
$$\text{Cont}^m(\mathcal{C})^0 = \{x^{(0)}=y^{(0)}=f^{(0)}=...=f^{(m-1)}=0\}\cap \{f^{(m)}\neq 0\}.$$ 
If we embed $\mathbf{A}^2_{m-1}\subset \mathbf{A}^2_{\infty}$ naturally for all $m$, then one can write
$$\text{Cont}^m(\mathcal{C})^0 = \mathcal{C}^0_{m-1}\cap \{f^{(m)}\neq 0\}.$$
Thus, studying the irreducible components of $\mathcal{C}^0_m$ for every $m$ is equivalent to studying those of $\text{Cont}^m(\mathcal{C})^0$ for all $m\in \mathbf{N}$.
\section{Special Staircase in the Plane} \label{stair} 

To determine the irreducible components of the jet schemes in the main theorems, a staircase algorithm will be described in this section.

Let $p,q$ be two coprime positive integers such that $0<p<q.$ Let $L_{(p,q)}$ be the half-line which starts at the origin of the plane and whose slope is $q/p;$ it passes through the point $(p,q)$.  

We call \emph{elementary steps} the unit vectors $(1,0)$ (horizontal) and $(0,1)$ (vertical). A \emph{broken line} is a finite sequence of such elementary steps. In this section, we shall answer the following question:

\begin{question}\label{Comb}
\textit{Let $\alpha\in L_{(p,q)}\cap \mathbf{N}^2_0$. What is the walk in the plane obtained by the broken line which starts at the point $\alpha+(1,1)$ and at each step it moves towards the half-line $L_{(p,q)}$ until reaching it?} 
\end{question}

It's worth noting that starting at the point $(1,1)$, the walker is always attracted by the half-line $L_{(p,q)}$. However, because of the unit of the steps he is obliged to jump over $L_{(p,q)}$ sometimes before reaching it. Note that the case $0<q < p$ is symmetric where we replace vertical steps by horizontal ones and vice versa. To clarify our procedures, we take the following examples.

\begin{example}
	Take $p=2$, $q=3$ and $\alpha=(0,0)$. Then, clearly the walker moves along the following points: $(1,1),(1,2), (2,2)$, $(2,3)$ in that order. This is seen in Figure \ref{walk(2,3)}. Clearly, we can encode this fact from the following formula:
$$(1,1)+\mathbf{(0,1)+(1,0)+(0,1)}=(2,3).$$
\end{example}

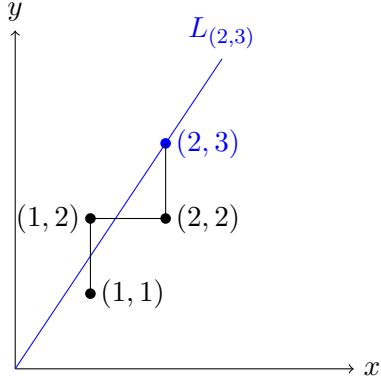
\begin{figure}[ht]
\centering
\begin{tikzpicture}
\draw[->] (0, 0) -- (4.5, 0) node[right] {$x$};
  \draw[->] (0, 0) -- (0, 4.5) node[above] {$y$};
  \draw[-] [scale=0.5, domain=0:5.5, smooth, variable=\x, blue] plot ({\x}, {1.5*\x}) node[above] {$L_{(2,3)}$};
  \draw[blue] (2,3) node[right] {$(2,3)$} ; 
  \fill[blue] (2,3) circle[radius=2pt];
  \draw (1,1) node[right] {$(1,1)$}; 
  \fill (1,1) circle[radius=2pt];
\draw (1,2) node[left] {$(1,2)$};
\fill (1,2) circle[radius=2pt]; 
\draw (2,2) node[right] {$(2,2)$}; 
\fill (2,2) circle[radius=2pt];
\draw[-] (1,1) -- (1,2);
\draw[-] (1,2) -- (2,2);
\draw[-] (2,2) -- (2,3);
  \end{tikzpicture}
  \caption{The walk mentioned in Question \ref{Comb} for $p=2$, $q=3$ and $\alpha=(0,0)$.} \label{walk(2,3)}
\end{figure}

\begin{example}
	Similarly to the previous example, if we take $p=2$, $q=3$ and $\alpha=(2,3)$, we find the same pattern for $(3,4)=(2,3)+(1,1)$:
$$(3,4)+\mathbf{(0,1)+(1,0)+(0,1)}=(4,6)$$
\end{example}

To answer Question \ref{Comb} for $0<p<q$ coprime, we start with $\alpha=(0,0)$ and introduce a
form of continued fractions which is given by the following iterated Euclidean divisions:

\begin{eqnarray}\label{Euclide}
    q & = & d_1p+r_1 \nonumber \\
    q+r_1 & = & d_2p+r_2  \nonumber\\
     &\vdots & \\
    q+r_{k-1} & = & d_kp+r_k  \nonumber\\
     &\vdots & \nonumber\\
     \nonumber
\end{eqnarray}

\noindent where in the first equality $d_1$ and $r_1$ are respectively the quotient and the remainder of the Euclidean division of $q$ by $p$. Recursively, for $k\geq 2,$ $d_k$ and $r_k$ are respectively the quotient and the remainder of the division of 
$q+r_{k-1}$ by $p$. In particular, we have $0\leq r_k<p.$ Once we obtain  $r_i=0$ for some index $i\geq 1,$ we stop
and we denote the continued fractions
$$\mathrm{SC}\left(\frac{q}{p}\right)=[[d_1,d_2,\ldots,d_i]].$$ 

Here $\mathrm{SC}$ stands for staircase; this name can be justified in Theorem \ref{walk}. Before proving the theorem, we need the following lemma.
\begin{lemma}\label{r_p}
With the same notation as above, we have the following.\\
  For $k=1,\ldots,p-1,$ $r_k>0$ and $r_p=0$ so that $$\mathrm{SC}\left(\frac{q}{p}\right)=[[d_1,\ldots,d_p]],$$
and  $q=d_1+\ldots+d_p.$
\end{lemma}

\begin{proof}
By summing up the first $k$ equations in (\ref{Euclide}), and then dividing by $p,$ we get
\begin{equation}\label{r_k}
\frac{kq}{p}=d_1+\cdots+d_k +\frac{r_k}{p}.
\end{equation}
Since $p$ and $q$ are coprime, then for $k<p,$ the left-hand side  of Equation (\ref{r_k}) is not an integer, and hence $r_k\not=0.$ However,
for $k=p,$ the left-hand side of Equation (\ref{r_k}) is an integer so that the right-hand side also is. This implies that $r_p=0$
$q=d_1+\ldots+d_p, $ and the proof is complete.

\end{proof}

As a result, we have the following theorem.

\begin{theorem}\label{walk}The walk which answers Question \ref{Comb} for $\alpha = (0,0)$ is determined by the following description:
\begin{itemize}
    \item The walker starts at $(1,1)$,
    \item For $1\leq i\leq p-1$, the walker moves vertically $d_i$  unit steps followed by one unit step horizontally,
    \item 
Finally,   the walker makes $d_p-1$  unit steps vertically.
\end{itemize}
\end{theorem}


\begin{proof}
Clearly, it is enough to prove that the walker moves along the following points:
\begin{center}
 $(1,1),(1,2),\ldots,(1,d_1+1),(2,d_1+1),$\\ 
  $(2,d_1+2),\ldots,(2,d_1+d_2+1),(3,d_1+d_2+1),$ \\  
  $\vdots$\\
   $(p-1,d_1+\cdots+d_{p-2}+1),\ldots,(p-1,d_1+\cdots+d_{p-1}+1),(p,d_1+\cdots+d_{p-1}+1)$\\
 $(p,d_1+\cdots+d_{p-1}+2),\ldots,(p,d_1+\cdots+d_{p-1}+d_p).$
\end{center}
As $p<q$, the line $L_{p,q}$ is above the point $(1,1),$ and hence the first step is vertical. To prove the same  for $k=1,\ldots,p-1,$ it is enough to show the following:

\begin{enumerate}
\item The point $(k,d_1+\cdots+d_k)$ is strictly below
$L_{p,q}.$
\item The point $(k,d_1+\cdots+d_k+1)$ is strictly above $L_{p,q}.$
\item The point $(k+1,d_1+\cdots+d_k+1)$ is strictly below $L_{p,q}.$
\item The point $(p,d_1+\cdots+d_{p-1}+d_p)$ belongs to $L_{p,q}$.
\end{enumerate}

Note that the last assertion follows from the equality $q=d_1+\ldots+d_p$ in  Lemma \ref{r_p}. Here the point being strictly below (respectively above) $L_{p,q}$ means that it belongs to the relative interior of the cone generated by $(1,0)^t$ and $(p,q)^t$ (respectively by $(p,q)^t$ and $(0,1)^t)$ where $t$ stands for transpose. The first assertion is equivalent to 
$\det\left[
\begin{array}{cc}
k & p \\
d_1+\cdots+d_k & q
\end{array}
\right] >0.$ Now by replacing $pd_i$ by $q+r_{i-1}-r_i,$ in (\ref{Euclide}), this latter determinant is equal to $$kq-pd_1-\cdots-pd_k=$$ $$kq-(q-r_1)-(q+r_1-r_2)-\cdots-(q+r_{k-1}-r_k)=r_k.$$ From Lemma $\ref{r_p}$, we know that $r_k>0,$ so that we obtain the first assertion. Similarly,
$$\det\left[
\begin{array}{cc}
p & k \\
q & d_1+\cdots+d_k+1
\end{array}
\right]=p-r_k>0$$ which gives the second assertion. Finally, the last assertion follows from the fact that $\det\left[
\begin{array}{cc}
k+1 & p \\
d_1+\cdots+d_k+1 & q
\end{array}
\right]=q-p+r_k>0$.
\end{proof}
Finally, the answer to Question \ref{Comb} for all $\alpha$ is given in the next corollary.
\begin{corollary} \label{th : equiv}
	Let $0<p<q$ be coprime integers and $\alpha\in L_{(p,q)}\cap \mathbf{N}^2_0$. Then, the answer to Question \ref{Comb} for both $(p,q,\alpha)$ and $(p,q,\alpha_0)$ with $\alpha_0=(0,0)$  is given by the same sequence of elementary steps.
\end{corollary}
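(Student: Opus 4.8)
The plan is to reduce the walk for an arbitrary $\alpha$ to the walk for $\alpha_0=(0,0)$ by a translation argument. First I would record that, since $p$ and $q$ are coprime, the lattice points of $L_{(p,q)}\cap\mathbf{N}^2_0$ are exactly the points $\alpha=(jp,jq)$ with $j\in\mathbf{N}_0$: a lattice point $(a,b)$ lies on $L_{(p,q)}$ iff $aq=bp$, and coprimality then forces $p\mid a$. In particular $\alpha_0$ corresponds to $j=0$.

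Next I would make the rule defining the walk explicit. At a lattice point $\beta\notin L_{(p,q)}$ the unique elementary step "towards $L_{(p,q)}$'' is the vertical step $(0,1)$ when $\beta$ lies strictly below $L_{(p,q)}$, i.e. when $\det(\beta^t,(p,q)^t)>0$ ($\beta$ in the relative interior of the cone generated by $(1,0)^t$ and $(p,q)^t$), and the horizontal step $(1,0)$ when $\beta$ lies strictly above $L_{(p,q)}$, i.e. when $\det(\beta^t,(p,q)^t)<0$; the walk stops as soon as $\beta\in L_{(p,q)}$. This is precisely the dichotomy underlying the four inequalities in the proof of Theorem \ref{walk}. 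Consequently the walk, and in particular its sequence of elementary steps, is completely determined by the value of $\beta\mapsto\operatorname{sgn}\det(\beta^t,(p,q)^t)$ along the visited points.

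The key observation is that this sign is invariant under translation by $\alpha=(jp,jq)$: for every $\beta$,
\[
\det\bigl((\alpha+\beta)^t,(p,q)^t\bigr)=\det(\alpha^t,(p,q)^t)+\det(\beta^t,(p,q)^t)=\det(\beta^t,(p,q)^t),
\]
since $\det(\alpha^t,(p,q)^t)=jpq-jqp=0$. Writing $P_0=(1,1),P_1,\ldots,P_N$ for the points visited by the walk for $\alpha_0$ — which, by Theorem \ref{walk}, reaches $L_{(p,q)}$ at $P_N=(p,q)$ — an immediate induction on $i$ shows that the walk for $\alpha$ visits exactly $\alpha+P_0=\alpha+(1,1),\,\alpha+P_1,\ldots,\alpha+P_N$: at each stage $\alpha+P_i$ has the same position relative to $L_{(p,q)}$ as $P_i$, hence the same elementary step is taken, and $\alpha+P_N=((j+1)p,(j+1)q)\in L_{(p,q)}$, so the $\alpha$-walk terminates there. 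Thus the two walks have the same sequence of elementary steps, proving Question \ref{Comb} for $(p,q,\alpha)$ and $(p,q,\alpha_0)$ have the same answer. All visited points lie in $\mathbf{N}^2_0$ because $j\ge 0$, so no step leaves the first quadrant.

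I do not anticipate a genuine obstacle; the only two points needing care are (i) phrasing the walk as a deterministic process whose step at $\beta$ depends only on $\operatorname{sgn}\det(\beta^t,(p,q)^t)$, which is exactly what the proof of Theorem \ref{walk} already verifies, and (ii) observing that termination of the $\alpha$-walk is inherited from the $\alpha_0$ case through the identical step sequence, rather than requiring a separate argument.
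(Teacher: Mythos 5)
Your proof is correct and follows essentially the same route as the paper: both reduce the walk for $\alpha=(jp,jq)$ to the case $\alpha_0=(0,0)$ by translating along the line $L_{(p,q)}$. Your version is in fact slightly more complete, since you explicitly verify, via the vanishing of $\det(\alpha^t,(p,q)^t)$, that each translated point has the same position relative to $L_{(p,q)}$ as the corresponding point of the base walk — a step the paper's proof leaves implicit.
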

\begin{proof}
Let $\alpha=a\cdot (p,q)$ with $a\geq 0$, and such that $\alpha+(1,1)=(ap+1,aq+1)$. Clearly, 
$$(p,q)=(1,1)+\sum_{i=1}^{p-1}[ d_i\cdot (0,1)+(1,0)]+(d_p-1)(0,1).$$
As $\alpha\in L_{(p,q)},$ then obviously $\alpha+(p,q)\in L_{(p,q)}.$  Thus,
$$\alpha+(1,1)+\sum_{i=1}^{p-1}[ d_i\cdot (0,1)+(1,0)]+(d_p-1)(0,1)=\alpha+(p,q).$$
This concludes the proof.
\end{proof}

\section{Jet Schemes of Newton  Plane Branches} \label{sec : newton-br} 

In this section, we are interested in jet schemes of Newton non-degenerate plane branches. We begin by presenting a well-known combinatorial object: the Newton polygon. First, recall that a plane curve singularity  $\mathcal{C}$ is the zero locus of an (reduced) element $$f=\sum c_{\alpha \beta}x^\alpha y^\beta \in \mathcal{O}_{k^2}$$ where $\mathcal{O}_{k^2}=k[x,y]$ has variables $x=x_1$ and $y=x_2$. More precisely, one can associate to any $f \in \mathcal{O}_{k^2}$ such that $f(0)=0$, its Newton polygon at the origin as follows. Let 
$$\Gamma^+(f,x,y)=\mbox{Conv}\{(\alpha,\beta) + \mathbf{R}_{\geq 0}^2;\:\ c_{\alpha \beta}\not=0\}\subset  \mathbf{R}_{\geq 0}^2$$ where Conv stands for the convex hull, then the Newton polygon $\Gamma(f,x,y)$ is the union of all  compact faces of (the boundary of) $\Gamma^+(f,x,y);$ it has 1-dimensional faces (line segments) and $0-$dimensional faces (vertices). The dual fan of $\Gamma (f,x,y)$ (the Newton dual fan) is a fan (a collection of cones) whose support is $\mathbf{R}_{\geq 0}^2$ and which is defined as follows: take $\omega \in \mathbf{R}^2_{\geq 0};$ and let $$\nu_{\omega}(f)=\min \{ \omega(\alpha,\beta);~~ c_{\alpha \beta}\not=0\},$$ where $\omega(\alpha,\beta)$ can be thought as the scalar product of $\omega$ and $(\alpha,\beta).$ The supporting hyperplane associated with $\omega$ is $$H_{\omega}(f)=\{(\alpha,\beta)\in \mathbf{R}^2  \omega(\alpha,\beta)=\nu_\omega(f)\}.$$ With a face $\Phi$ of $\Gamma(f,x,y),$ one associates the cone $$C_\Phi:=\{\omega \in \mathbf{R}^2; \Phi\subset H_{\omega}(f)\}.$$ The set $\{C_\Phi,\Phi~\mbox{a face of } \Gamma (f,x,y) \}$ is the Newton dual  fan. The tropical variety $T(f)$ of $f$ is the union of the cones $C_\Phi$ associated to each compact segment $\Phi$ of the Newton Polygon of $f$. Let $p$ and $q$ be co-prime integers such that $0<p<q$ (as in Section \ref{stair}). A Newton non-degenerate plane branch (or simply a \emph{Newton plane branch}) $\mathcal{C}$ is the zero locus of an element 
$$f=y^p-cx^q+\sum_{pa+qb>pq}c_{ab}x^ay^b \in \mathcal{O}_{k^2}.$$
(This definition coincides with the one defined  later in Section \ref{sec:newton}).
 In the case of $f$ defining a plane Newton non-degenerate branch, $\Gamma(f,x,y)$ has two vertices $(q,0)$ and $(0,p)$ and one compact face which is the line-segment joining these two vertices. The dual of $(q,0)$ (respectively $(0,p))$ is the 2-dimensional cone generated by $(p,q)$ and $(0,1)$ (respectively  $(1,0)$ and $(p,q))$. Also, the dual cone of the compact segment is the one dimensional cone generated by $(p,q)$. This 1-dimensional cone is in fact $L_{(p,q)}$.

Next, we are going to recall from \cite{M_ts}, the definition of the jet-components graph of a plane curve singularity. 

 Set $\tau_{m+1}=\pi_{m+1,m} : \mathcal{C}_{m+1}\rightarrow \mathcal{C}_m$ the truncation morphism for $m\in \mathbf{N}_0$, and
 denote by $irr(\mathcal{C}^0_m)$ to be the set of irreducible components of the reduced scheme $\mathcal{C}^0_m$. Then, the jet-component graph $Jet(f)$ of $f$ can be defined as follows: 
\begin{itemize}
	\item the vertices of $Jet(f)$ are the elements of $\bigcup_{m\in \mathbf{N}_0}irr(\mathcal{C}^0_m)$,
	\item each vertex is weighted by $(d,e)$ where $d$ is its Krull dimension and $e$ its embedding dimension,
	\item an edge of $Jet(f)$ joins two irreducible components $V_m\in irr(\mathcal{C}_m^0)$ and $V_{m+1}\in irr(\mathcal{C}_{m+1}^0)$ for some $m\in \mathbf{N}_0$ whenever $\tau_{m+1}(V_{m+1})\subseteq V_m.$
\end{itemize}
Finally, an element of $irr(\mathcal{C}^0_m)$ is called a vertex of $Jet(f)$ at the level $m\in \mathbf{N}_0$.
 
It is worth noting that in $Jet(f)$ there is a redundancy of irreducible components of the form $\mathcal{H}^\alpha_m$. In fact, one may have $\mathcal{H}^\alpha_n$ and $\mathcal{H}^\alpha_m$ as components with $n\neq m$. This allows us to construct a new graph where some of these components are removed without losing any information. The next example illustrates these ideas. 
 
 \begin{example}
 \normalfont{Let us draw the jet-components graph of the cusp i.e. $f=y^2-x^3$ in $\mathbf{A}^2$ in the two ways mentioned earlier (See Figure \ref{fig:jet-cusp} below). First, we introduce the notation $(\alpha,w)=\mathcal{H}^\alpha_w$ and we recall that $w=\nu_\alpha(f)-1$.

  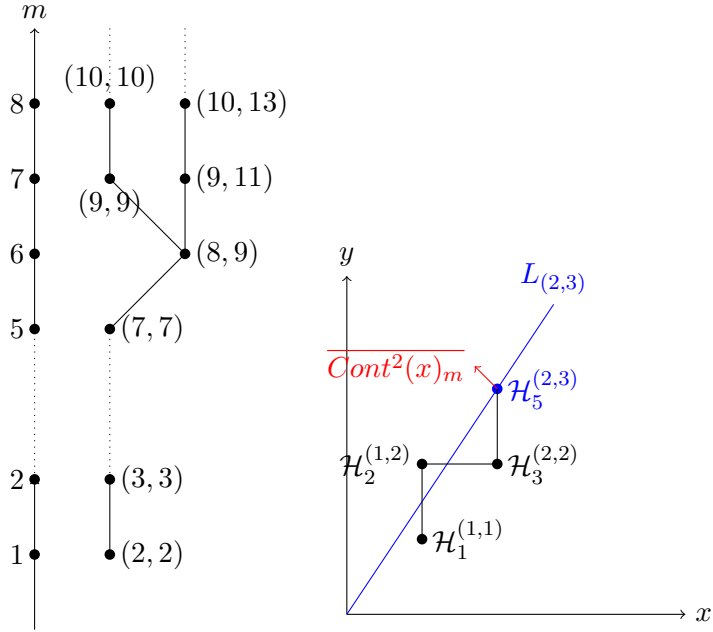
\begin{figure} [ht]
  \centering
  \subfloat{
\begin{tikzpicture}

\fill (0,1) circle[radius=2pt];
\fill (0,2) circle[radius=2pt];
\draw (0, 2) node[left] {$2$};
\fill (0,4) circle[radius=2pt];
\draw (0, 4) node[left] {$5$};

\fill (0,5) circle[radius=2pt];
\draw (0, 5) node[left] {$6$};

\fill (0,6) circle[radius=2pt];
\draw (0, 6) node[left] {$7$};

\fill (0,7) circle[radius=2pt];
\draw (0, 7) node[left] {$8$};

\draw (0, 1) node[left] {$1$};
  \draw[->] (0, 0) -- (0, 2) node[above] {};
  \draw[->] (0, 4) -- (0, 8) node[above] {$m$};
  \draw[dotted] (0, 2) -- (0, 4) node[above] {};
  \draw (1,1) node[right] {$(2,2)$}; 
  \fill (1,1) circle[radius=2pt];
\draw[-] (1,2) -- (1,1);
\draw[dotted] (1,2) -- (1,4);

\draw (1,4) node[right] {$(7,7)$}; 
  \fill (1,4) circle[radius=2pt];

\draw (2,5) node[right] {$(8,9)$}; 
  \fill (2,5) circle[radius=2pt];
\draw[-] (2,5) -- (1,4);

\draw (1,2) node[right] {$(3,3)$}; 
  \fill (1,2) circle[radius=2pt];

\draw (1,6) node[below] {$(9,9)$}; 
  \fill (1,6) circle[radius=2pt];

\draw (2,6) node[right] {$(9,11)$}; 
  \fill (2,6) circle[radius=2pt];

\draw (2,7) node[right] {$(10,13)$}; 
  \fill (2,7) circle[radius=2pt];

\draw (1,7) node[above] {$(10,10)$}; 
  \fill (1,7) circle[radius=2pt];
\draw[-] (1,6) -- (2,5);
\draw[-] (2,5) -- (2,6);

\draw[-] (1,6) -- (1,7);
\draw[-] (2,6) -- (2,7);

\draw[dotted] (1,8) -- (1,7);
\draw[dotted] (2,8) -- (2,7);

  \end{tikzpicture}
  }
    \subfloat{
  \begin{tikzpicture}
\draw[->] (0, 0) -- (4.5, 0) node[right] {$x$};
  \draw[->] (0, 0) -- (0, 4.5) node[above] {$y$};
  \draw[-] [scale=0.5, domain=0:5.5, smooth, variable=\x, blue] plot ({\x}, {1.5*\x}) node[above] {$L_{(2,3)}$};
  \draw[blue] (2,3) node[right] {$\mathcal{H}^{(2,3)}_5$} ; 
  \fill[blue] (2,3) circle[radius=2pt];
  \draw (1,1) node[right] {$\mathcal{H}^{(1,1)}_1$}; 
  \fill (1,1) circle[radius=2pt];
\draw (1,2) node[left] {$\mathcal{H}^{(1,2)}_2$};
\fill (1,2) circle[radius=2pt]; 
\draw (2,2) node[right] {$\mathcal{H}^{(2,2)}_3$}; 
\fill (2,2) circle[radius=2pt];
\draw[-] (1,1) -- (1,2);
\draw[-] (1,2) -- (2,2);
\draw[-] (2,2) -- (2,3);
\draw[->] [red]  (2,3) -- (1.7,3.3);
\draw (1.7,3.3) node[left] {$\textcolor{red}{\overline{Cont^2(x)_m}}$};
  \end{tikzpicture}
   }
   \caption{Different representations of the jet components graph of the cusp.}
  \label{fig:jet-cusp}
\end{figure}

 
 The drawing on the left is obtained by the first way and the one on the right is the new representation of the jet components graph. What makes the drawing on the right more appealing than the one on the left might be the following reasons:
 \begin{itemize}
 	\item The drawing on the right is obtained by an algorithm  whose steps are described in Section \ref{sec:theorem}. It describes a link between the tropical variety and the irreducible components of the jet schemes (which isn't the case for the drawing on the left). In addition, this algorithm is the same for any Newton non-degenerate plane branch (which we will see in the next example).

 	\item When $k=\mathbf{C}$, the graph on the left determines the embedded topological type of the singularity (defined by the cusp in our case) as proven in \cite{M_eq}. Whereas the graph on the right gives richer algebraic data such as an exact description of the $\alpha$ (appearing in the irreducible components of the form $\mathcal{H}^\alpha_m$).
 	
 	\item The weights $(d,e)$ on the left should be added on each vertex in order to know which component vanishes at a certain level (that is when $d=e$) and which does not. However, in the graph on the right, all nonvanishing components lie on the tropical variety.
 	
 	\item The graph on the left can be obtained from the one on the right as follows. For every edge $\alpha\alpha'$ appearing in the graph on the right (where $w=\nu_\alpha(f)-1$ and $w'=\nu_{\alpha'}(f)-1$ are the corresponding weights with $w<w'$), an amount of $w'-w-1$ dots should be added on the edge $\alpha\alpha'$. These dots correspond to $\mathcal{H}^\alpha_m$ for $w\leq  m<w'$.
 \end{itemize}
 
Now, we justify why both representations are essentially the same. According to \cite{M_eq}, every irreducible component $V_m \subset\mathcal{C}^0_m$ can be written as $V_m=\overline{\mathbf{A}^2_m\cap F^\alpha_m}$ for some unique $\alpha\in \mathbf{N}^2$. Using the same notation as in \cite{M_eq}, we note that if $\nu_\alpha(f)>m$ then $V_m=B_m$ and if $\alpha\in L_{(2,3)}$ and  $\nu_\alpha(f)\leq m$, then $V_m=\overline{Cont^{\alpha_1}(x)_m}$ does not vanish and continues to $\infty$. This is due to the fact that $\nu_\alpha(f)=6r$ when $\alpha=(2r,3r)$ for  $r\in \mathbf{N}$.

	}
 \end{example}


 First, we replace $\mathcal{H}^\alpha_m$ by $\alpha$ and remove the text in red (for $f=y^2-x^3$) that  is the name of an irreducible component that does not vanish. This is because  there is a correspondance between the irreducible components of $\mathcal{C}^0_m$ : $m\in \mathbf{N}$ and some weighted points in $\mathbf{R}_{\geq 0}^2$. Here is how it is defined: let $V_m\subset \mathcal{C}^0_m$ be an irreducible component written as $V_m=\overline{\mathbf{A}^2_m\cap F^\alpha_m}$ for a unique $\alpha\in \mathbf{N}^2$. We associate to $V_m$ the weighted point $(\alpha, \infty) $ if $\alpha\in L_{(2,3)}$, $\nu_\alpha(f)\leq m$ and otherwise, we associate to $V_m$ the weighted point $[\alpha, \nu_\alpha(f)-1]$. 

It's worthy to notice that there are two kinds of irreducible components for $\mathcal{C}^0_m$ which can be seen in the next table.
 
\begin{table}[h]
\begin{tabular}{|l|l|l|}
\hline
\textbf{Irreducible components} & \textbf{Interpretation} & \textbf{Jet-components graph} \\ \hline
                      $\mathcal{H}^{\alpha}_m : m<\nu_\alpha(f)$ &    Big, Vanishing & $[\alpha,\nu_\alpha(f)-1]$                                  \\ \hline
                     $\mathbf{A}^2_m\cap F^\alpha_m : \alpha\in T(f)$  & Nonvanishing & $(\alpha,\infty)$                               \\ \hline
                       
\end{tabular}
\caption{Dictionary of notations for $f=y^2-x^3$.} 
\end{table} 
 The weight of the point $\alpha$ is the level $w$ of the jet scheme $\mathcal{C}^0_{w+1}$ where the irreducible component $V_{w+1}$ vanishes: if $w<\infty$,  $V_w=\mathcal{H}^\alpha_w$ is replaced by  either $\mathcal{H}^{\alpha+(0,1)}_{w+1}$ (when $\alpha$ is below $L_{(2,3)}$) or $\mathcal{H}^{\alpha+(1,0)}_{w+1}$ (when $\alpha$ is above $L_{(2,3)}$). If $w=\infty$, then $V_m$ does not vanish.
 
 Concerning $f=y^2-x^3$, we notice that if we remove the weights from all the points $(\alpha,w)\mapsto \alpha$ then these $\alpha$'s follow the same pattern as in Figure \ref{walk(2,3)}. 
\\ \\
Next, we consider the case of a Newton plane branch having a semi-line as a tropical variety.
 
\begin{example} \normalfont{Let $f$ define a Newton non-degenerate plane branch with $L_{(p,q)}$ as a tropical variety and $0<p<q$ coprime with the staircase notation $[[d_1,..., d_p]]$ associated to $(p,q)$ from Theorem \ref{walk}. An inspection shows that Figure \ref{fig:lambda} is the staircase representation $J_{SC}(f)$ associated to $f$} which can be described as follows.
For $\lambda\in L_{(p,q)}\cap \mathbf{N}^2_0$, then
\begin{itemize}
	\item the walker starts at $\lambda+(1,1)$,
	\item for $1\leq i\leq p-1$, the walker moves $d_i$ unit steps vertically followed by one step horizontally,
	\item the walker takes $d_p-1$ steps vertically until reaching $\lambda+(p,q)$,
	\item next, we add a red arrow above $\lambda+(p,q)$,
	\item finally, the walker moves to $\lambda+(p,q)+(1,1)$ and restarts over.
\end{itemize}
 Note that each time the walker moves from a point $\alpha$ to either $\alpha+(0,1)$ or $\alpha+(1,0)$, then $\alpha$ is deleted and  replaced by the new point. In addition,  each point $\alpha$ has a weight of  $\nu_\alpha(f)-1$. Also, the red arrows are never deleted and they are associated to points on the tropical variety with weight $\infty$. 
 In the case where $0<q<p$, vertical steps are replaced by horizontal steps and vice versa.
\end{example}
 
 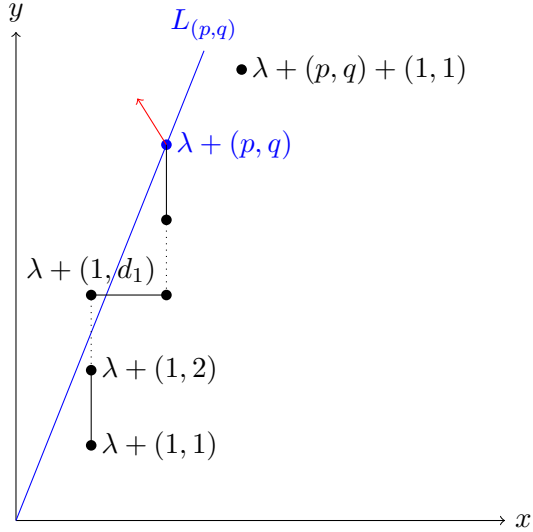
\begin{figure} [ht]
\centering
\begin{tikzpicture}
\draw[->] (0, 0) -- (6.5, 0) node[right] {$x$};
  \draw[->] (0, 0) -- (0, 6.5) node[above] {$y$};
  \draw[-] [scale=0.5, domain=0:5, smooth, variable=\x, blue] plot ({\x}, {5*0.5*\x}) node[above] {$L_{(p,q)}$};
  \draw[blue] (2,5) node[right] {$\lambda+(p,q)$} ; 
  \fill[blue] (2,5) circle[radius=2pt];
  \draw[red, ->] (2,5) -- (2,6,1); 
  \draw (1,1) node[right] {$\lambda+(1,1)$}; 
  \fill (1,1) circle[radius=2pt];
\draw (1,2) node[right] {$\lambda+(1,2)$};
\fill (1,2) circle[radius=2pt]; 
\draw (1,3) node[above] {$\lambda+(1,d_1)$}; 
\fill (1,3) circle[radius=2pt];
\draw[-] (1,1) -- (1,2);
\draw[dotted] (1,2) -- (1,3);
\draw[-] (1,3) -- (2,3);
\draw[-] (2,4) -- (2,5);
\draw[dotted] (2,3) -- (2,4);
\fill (2,3) circle[radius=2pt];
\fill (2,4) circle[radius=2pt];
\fill (3,6) circle[radius=2pt];
\draw (3,6) node[right] {$\lambda+(p,q)+(1,1)$}; 
  \end{tikzpicture}
  \caption{Drawing of the unweighted points representing the irreducible components of $\mathcal{C}^0_m$ : $m\in \mathbf{N}$. The red arrow corresponds to an irreducible component that continues to infinity.}
  \label{fig:lambda}
\end{figure}
 
 The next proposition shows that $J_{SC}(f)$ determines $Jet(f)$.
 
 \begin{proposition}
 	If $f,g$ define some Newton plane branches with $J_{SC}(f)=J_{SC}(g)$ then we have $Jet(f)=Jet(g)$.
 \end{proposition}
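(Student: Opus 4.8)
The plan is to reduce the statement to one observation: for a Newton plane branch both $J_{SC}(f)$ and $Jet(f)$ are determined by the pair of coprime integers $0<p<q$ read off from the Newton polygon of $f$. So I would establish \emph{(i)} $J_{SC}(f)$ determines $(p_f,q_f)$, and \emph{(ii)} $Jet(f)$ depends on $f$ only through $(p_f,q_f)$; then $J_{SC}(f)=J_{SC}(g)$ gives $(p_f,q_f)=(p_g,q_g)$ by \emph{(i)}, whence $Jet(f)=Jet(g)$ by \emph{(ii)}.

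For \emph{(i)}: by its construction (Figure \ref{fig:lambda}, Theorem \ref{walk}, Corollary \ref{th : equiv}), $J_{SC}(f)$ is the weighted configuration obtained by concatenating, for $a\geq 0$, the staircase walk $\mathrm{SC}(q_f/p_f)=[[d_1,\dots,d_{p_f}]]$ started at $a(p_f,q_f)+(1,1)$, and planting a red arrow at each point $a(p_f,q_f)$ with $a\geq 1$. Hence the red-arrowed points of $J_{SC}(f)$ are precisely the nonzero lattice points of $L_{(p_f,q_f)}$, the first of which is $(p_f,q_f)$; so $(p_f,q_f)$ is read off directly. (Equivalently: recover $[[d_1,\dots,d_{p_f}]]$ from the step heights of one period, then $p_f$ as its length and $q_f=d_1+\cdots+d_{p_f}$ by Lemma \ref{r_p}.)

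For \emph{(ii)}: the leveled spaces $\mathbf{A}^2_m$, the truncations $\tau_{m+1}=\pi_{m+1,m}$, and the subvarieties $\mathcal{H}^\alpha_m$ and $F^\alpha_m$ do not involve $f$; and since $f$ is Newton non-degenerate, the generic order of vanishing of $f$ along $\mathcal{H}^\alpha_\infty$ equals $\nu_\alpha(f)$, which by definition depends only on the Newton polygon of $f$. For a Newton plane branch that polygon is the segment joining $(q_f,0)$ and $(0,p_f)$, so $\alpha\mapsto\nu_\alpha(f)$, the tropical variety $T(f)=L_{(p_f,q_f)}$, and with them the list of irreducible components $\overline{\mathbf{A}^2_m\cap F^\alpha_m}$ of $\mathcal{C}^0_m$, the Krull and embedding dimensions $(d,e)$ of each, and the truncation incidences between consecutive levels, are all functions of $(p_f,q_f)$ alone. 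Making this precise is exactly the role of the crucial lemmas of Section \ref{sec:crucial} and the main theorems of Section \ref{sec:theorem}: they express $irr(\mathcal{C}^0_m)$, the vertex weights, and the edges of $Jet(f)$ through the staircase $[[d_1,\dots,d_p]]$ --- the ``vanishing'' vertices being the affine spaces $\mathcal{H}^\alpha_m$ indexed by the walk points $\alpha$ off $L_{(p,q)}$, each alive on the level interval dictated by the orders $\nu_{\alpha'}(f)$ at the walk points $\alpha'$ adjacent to $\alpha$; the ``non-vanishing'' vertices being the components $\overline{\mathbf{A}^2_m\cap F^\alpha_m}$ attached to the red-arrowed points $\alpha\in L_{(p,q)}$; and the edges being forced by the inclusions $\mathcal{H}^{\alpha+(0,1)}_{m+1}\subseteq\mathcal{H}^\alpha_{m+1}$ and $\mathcal{H}^{\alpha+(1,0)}_{m+1}\subseteq\mathcal{H}^\alpha_{m+1}$ and their analogues at $L_{(p,q)}$. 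Running this recipe for $f$ and for $g$ sharing the same $(p,q)$ yields literally the same weighted graph, so $Jet(f)=Jet(g)$.

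The main obstacle is \emph{(ii)}, and within it the invariance of the reduced schemes $\mathcal{C}^0_m$ --- in particular of the embedding dimensions of their components, which are not visibly invariants of the Newton polygon --- under changing the higher-order monomials of $f$. This is where the non-degeneracy hypothesis genuinely enters: one must show that on the stratum where $x$ has order exactly $\alpha_1$ and $y$ has order exactly $\alpha_2$, the first non-trivial jet equation --- whose restriction there is governed by the non-degenerate initial form of $f$ --- cuts out a locus of the expected codimension with the expected singular locus, independently of the remaining terms of $f$; this is precisely the content of Section \ref{sec:crucial}. A secondary, purely combinatorial point is to pin down, along each edge of $J_{SC}(f)$, the exact interval of levels $m$ on which a given $\mathcal{H}^\alpha_m$ really is an irreducible component (near the restart points of the walk it can be momentarily swallowed by a larger non-vanishing component); but this interval is again recorded by the weights in $J_{SC}(f)$, so once \emph{(ii)} is available the proposition reduces to bookkeeping with the staircase of Theorem \ref{walk}.
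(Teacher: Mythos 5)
Your proposal is correct and is in substance the same as the paper's argument: the paper proves the proposition by exhibiting $Jet(h)$ as the output of a construction whose only input is the weighted graph $J_{SC}(h)$ (on each edge with weighted endpoints $(\alpha,w)$ and $(\alpha',w')$ insert $w'-w-1$ intermediate vertices for the components $\mathcal{H}^\alpha_m$ with $w\leq m<w'$, hang countably many vertices on each red arrow for the non-vanishing components, and rescale weights $w\mapsto w+1$), which is precisely your point that $Jet(f)$ and $J_{SC}(f)$ are determined by the same combinatorial data, with the same supporting results from Sections \ref{sec:crucial}--\ref{sec:theorem} and \cite{M_eq} doing the real work in both versions. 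Your intermediate factorization through the pair $(p,q)$ is a branch-specific repackaging of this; the paper's direct edge-subdivision formulation has the minor advantage of applying verbatim to reducible non-degenerate curves, which is how it is reused at the end of Section \ref{sec:theorem}.
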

 \begin{proof}
 Consider first an edge in Figure \ref{fig:lambda}. Denote by $(\alpha,w)$ and $(\alpha',w')$ with $w<w'<\infty $ the weighted vertices of this edge. Note that $\alpha'=\alpha+(1,0)$ or $\alpha'=\alpha+(0,1)$ with $w=\nu_\alpha(f)-1$ and $w'=\nu_{\alpha'}(f)-1$. Add on the edge $\alpha\alpha'$, $w'-w-1$ vertices representing the big components defined by the same $\alpha$. 
 Now consider a nonvanishing component represented by $(\alpha, \infty)$. Add on the arrow above $\alpha$ a countably infinite amount of vertices. These represent the nonvanishing components for $m\geq \nu_\alpha(f)$ and $\alpha\in T(f)$. Finally, scale each weight $w$ by its level $m=w+1$.\\
This construction that we just described allows us to obtain $Jet(h)$ from $J_{SC}(h)$ for every polynomial (or power series) $h$ which is non-degenerate (with respect to $x$ and $y$). One of the advantages of this construction is that it depends only on the weighted graph $J_{SC}(h)$ and not on the polynomial $h$.
 This allows us to conclude that since $J_{SC}(f)=J_{SC}(g)$ then $Jet(f)=Jet(g)$. \end{proof}
\section{Jet Schemes of Newton Plane Curves} \label{sec:newton} 

We call $\mathcal{C}\subset k^2$ a \emph{Newton non-degenerate plane curve} (or simply a \emph{Newton plane curve}) if it is the zero locus of some reduced 
$$f=\sum_{(\alpha,\beta)\in supp(f)} c_{\alpha,\beta}\cdot x^\alpha y^\beta\in \mathcal{O}_{k^2}$$
 where for every face $\Phi$ of the Newton Polygon of $f$,  the restriction of $f$ to $\Phi$
 $$f_{\Phi}=\sum_{(\alpha,\beta)\in \Phi\cap supp(f)}c_{\alpha,\beta}\cdot x^\alpha y^\beta$$ 
has no critical points in the algebraic torus $(k^\times)^2$  i.e. the system of equations:
$$f_\Phi = \frac{\partial f_\Phi}{\partial x}=\frac{\partial f_\Phi}{\partial y}=0$$
has no solutions in this torus.

It is worth mentioning that it is crucial in our study  to know the description of Newton plane curves in order to compute their jet schemes over the origin of $k^2$. In this section, we will describe the equations of these curves.

First, we start with Proposition \ref{th : new-br} that describes the equation of a Newton plane branch up to an isomorphism. 

\begin{proposition} \cite{zbMATH05577991}\label{th : new-br}
The Newton plane branches are up to an isomorphism the plane curves defined by

$$f = y^p - cx^q + \sum_{pa+qb>pq} c_{a,b} x^ay^b \in \mathcal{O}_{k^2}$$
with $p,q\geq 0$ coprime integers (and $f$ has only one Puiseux characteristic pair).
\end{proposition}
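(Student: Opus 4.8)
The plan is to prove Proposition \ref{th : new-br} by translating the Newton non-degeneracy condition for a branch into the classical Puiseux-parametrization picture. First I would observe that since $\mathcal{C}$ is a branch (irreducible germ) passing through the origin, its Newton polygon $\Gamma(f,x,y)$ has a single compact edge; indeed if there were two or more edges, the associated face polynomials would, upon the non-degeneracy hypothesis, force $f$ to factor (each edge contributing a distinct tangent/Puiseux behaviour), contradicting irreducibility. So I would begin by showing: \emph{a Newton non-degenerate branch has exactly one compact edge, joining a point $(q,0)$ on the $x$-axis to a point $(0,p)$ on the $y$-axis}. The fact that the edge meets both axes uses that $f$ is reduced and that $\{f=0\}$ is a curve singularity at the origin with $f(0)=0$; if the edge did not touch, say, the $y$-axis, then $x$ would divide $f_\Phi$ and one could check non-degeneracy fails (the partial $\partial f_\Phi/\partial y$ and $f_\Phi$ would share a common zero in the torus), or else $x \mid f$ contradicting that $\mathcal{C}$ is a branch.

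Next I would analyze the face polynomial of that single edge. Write $d=\gcd(p,q)$, $p=d p'$, $q = d q'$ with $p',q'$ coprime. The edge polynomial has the form $f_\Phi = \sum_{i=0}^{d} c_i\, x^{(d-i)q'} y^{i p'}$, i.e. a homogeneous polynomial of degree $d$ in the monomials $x^{q'}, y^{p'}$. The key step is that Newton non-degeneracy along $\Phi$ is equivalent to $f_\Phi$, viewed as a binary form $\sum c_i X^{d-i}Y^i$ in $X=x^{q'}$, $Y=y^{p'}$, having no repeated roots in $(k^\times)$ — a standard computation with the chain rule shows $f_\Phi = \partial_x f_\Phi = \partial_y f_\Phi = 0$ has a torus solution iff the binary form and its derivative share a nonzero root. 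Then the statement "$f$ has only one Puiseux characteristic pair" forces $d=1$: if $d\geq 2$, the squarefree binary form $\sum c_i X^{d-i} Y^i$ factors into $d\geq 2$ distinct linear factors over $\bar k$, and each factor gives a separate branch, so $\mathcal C$ is not a branch (or has more than one characteristic pair). Hence $d=1$, $p=p'$, $q=q'$ are coprime, and $f_\Phi = c_0 y^p + c_1 x^q$ with $c_0, c_1 \neq 0$.

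With $d=1$ in hand, I would finish by normalizing. Rescaling $y$ (an isomorphism of $k^2$ fixing the origin) I may assume the coefficient of $y^p$ is $1$; setting $c = -c_1/c_0$ gives $f_\Phi = y^p - c x^q$. Every other monomial $x^a y^b$ appearing in $f$ lies in $\Gamma^+(f,x,y)$ strictly above the compact edge, and the equation of the line through $(q,0)$ and $(0,p)$ is $pa+qb = pq$; lying strictly above it (and in $\mathbf{R}^2_{\geq 0}$) means precisely $pa+qb > pq$. This yields exactly the asserted normal form $f = y^p - cx^q + \sum_{pa+qb>pq} c_{a,b}x^ay^b$. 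Conversely, for any such $f$ with $p,q$ coprime, the Newton polygon is the single edge from $(q,0)$ to $(0,p)$, the edge polynomial is $y^p - cx^q$ which (being coprime-exponent and $c\neq 0$) is non-degenerate and irreducible with a single characteristic pair $q/p$; so such curves are exactly the Newton plane branches.

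I expect the main obstacle to be the equivalence in the second paragraph: making rigorous the claim that the torus-solvability of $f_\Phi = \partial_x f_\Phi = \partial_y f_\Phi = 0$ is controlled exactly by the discriminant of the associated binary form, and then tying "squarefree binary form of degree $d$" to "number of branches/characteristic pairs equals $d$." The cleanest route is probably to invoke the classical description of the branches of $f$ along a Newton edge via Newton–Puiseux (each root of the edge binary form, with multiplicity, seeds a Puiseux expansion), which is where the cited reference \cite{zbMATH05577991} does the heavy lifting; I would lean on that rather than re-deriving Newton–Puiseux from scratch. Care is also needed at the boundary-of-the-polygon step (why the edge meets both axes), which should be handled as a short lemma using that $f$ is reduced and defines an isolated curve singularity.
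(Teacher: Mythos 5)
The paper does not actually prove this proposition: it is quoted from the cited reference, and the only supporting material in the text is the characterization (borrowed from another reference after Theorem~\ref{thm: newton shape}) that $f$ is Newton non-degenerate iff each edge initial form $in_{(p,q)}(f)$ is a monomial times a product of \emph{distinct} binomials $y^p-c_jx^q$ with $p,q$ coprime. Your sketch is the standard argument and matches that characterization in substance: one compact edge by irreducibility, non-degeneracy of the edge equivalent to squarefreeness of the associated binary form, $\gcd=1$ forced by irreducibility since each distinct binomial factor seeds its own branch, and then the normalization and the converse. So your route is essentially the same as the one the paper (implicitly, via citation) relies on, and I see no genuine gap, with two caveats you already flag or should flag: the bijection between binomial factors of the edge polynomial and branches of $f$ is the nontrivial Newton--Puiseux/toric input that must come from the reference, and the chain-rule reduction $\partial_x f_\Phi = q'x^{q'-1}F_X(x^{q'},y^{p'})$ needs the characteristic of $k$ not to divide $p'$ or $q'$ (harmless in characteristic $0$, where the paper's topological statements live, but worth noting since $k$ is only assumed algebraically closed). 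You should also record that $c\neq 0$ and $p,q\geq 1$ are implicit in the normal form, an imprecision of the statement rather than of your argument.
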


 Recall that every curve which is contained in a Newton plane curve is also Newton non-degenerate. 
So we can describe the equations of Newton plane curves through their defining polynomials (or power series) as well as the (equational) relations between their different branches.

\begin{theorem} \label{thm: newton shape} 
Any Newton plane curve $\mathcal{C}\subset k^2$ is defined (up to an isomorphism) by:

\begin{enumerate}
\item each branch $\mathcal{B} \subset \mathcal{C}$ is given by some $g\in \mathcal{O}_{k^2}$ such that 

$$g = y^p - cx^q + \sum_{pa+qb>pq} c_{a,b} x^ay^b.$$

\item If $g' = y^{p'} - c' x^{q'} + \sum_{p'a+q'b>p'q'} c'_{a,b} x^ay^b$ is another branch of $\mathcal{C}$ with $(p,q) = (p',q'),$ then $c\neq c'$.

\end{enumerate}
\end{theorem}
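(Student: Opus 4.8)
The plan is to reduce both assertions to the multiplicativity of the initial forms $\mathrm{in}_\omega$ (lowest-weight parts for a positive weight $\omega$) together with the non-degeneracy hypothesis imposed on $f$ itself. First I would factor $f=u\cdot g_1\cdots g_r$ in $\mathcal{O}=k[[x,y]]$, with $u$ a unit and $g_1,\dots,g_r$ the irreducible factors cutting out the branches $\mathcal{B}_1,\dots,\mathcal{B}_r$ of $\mathcal{C}$. Since each $\mathcal{B}_i\subset\mathcal{C}$, the remark preceding the statement shows that each $g_i$ is Newton non-degenerate with respect to $x,y$. The first real step is to check that, \emph{in these same coordinates}, $\mathrm{NP}(g_i)$ is a single primitive segment joining a point $(0,p_i)$ on the $y$-axis to a point $(q_i,0)$ on the $x$-axis with $\gcd(p_i,q_i)=1$ (setting aside, or removing beforehand, the degenerate situation in which a branch is a coordinate axis): if $\mathrm{NP}(g_i)$ had two compact edges of different slopes, $g_i$ would carry a branch attached to each of them and hence be reducible; if the single edge failed to meet an axis, then $x\mid g_i$ or $y\mid g_i$; and if the edge from $(0,p_i)$ to $(q_i,0)$ had $d=\gcd(p_i,q_i)\geq 2$, its face polynomial would be a binary form of degree $d$ in $x^{q_i/d},y^{p_i/d}$, hence — by non-degeneracy, which forbids a repeated root — a product of $d$ distinct linear forms, again forcing $g_i$ to be reducible. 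Once this is known, $\mathrm{supp}(g_i)$ lies on or above that primitive segment, so $g_i=a_iy^{p_i}+b_ix^{q_i}+(\text{terms with }p_ia+q_ib>p_iq_i)$ with $a_i,b_i\neq0$; dividing by $a_i$ and absorbing the resulting units into $u$ gives the normal form of Proposition \ref{th : new-br} with $c_i\neq0$. This proves (1).

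For (2), fix a pair $(p,q)$, put $S=\{i:(p_i,q_i)=(p,q)\}$, and suppose toward a contradiction that $c_{i_0}=c_{j_0}=:c$ for two distinct indices $i_0,j_0\in S$. I would work with $\omega=(p,q)$ and use that $\mathrm{in}_\omega$ is multiplicative on the domain $\mathcal{O}$, so $\mathrm{in}_\omega(f)=u(0)\prod_i\mathrm{in}_\omega(g_i)$ with $u(0)\neq0$. A short computation on the normal forms above shows $\mathrm{in}_\omega(g_i)=y^{p}-c_ix^{q}$ when $i\in S$ (only the two vertices attain the minimal weight $pq$), whereas $\mathrm{in}_\omega(g_i)$ is a single monomial when $i\notin S$ (the two vertices of the segment then have distinct $\omega$-weights, because both $(p,q)$ and $(p_i,q_i)$ are coprime), and a monomial has no zero on $(k^\times)^2$. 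Hence
$$\mathrm{in}_\omega(f)=(\text{monomial})\cdot\prod_{i\in S}\bigl(y^{p}-c_ix^{q}\bigr),$$
which — since $S\neq\varnothing$, so the right-hand side is not a monomial — is exactly the face polynomial $f_\Phi$ of the \emph{one-dimensional} compact face $\Phi$ of $\mathrm{NP}(f)$ dual to $\omega$. Under our assumption $(y^{p}-cx^{q})^2$ divides $f_\Phi$; and since $c\neq0$, the factor $y^p-cx^q$ vanishes at some point of $(k^\times)^2$, where the product rule then forces $f_\Phi=\partial_xf_\Phi=\partial_yf_\Phi=0$. This contradicts the Newton non-degeneracy of $\mathcal{C}$, so the coefficients $c_i$ with $i\in S$ are pairwise distinct.

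I expect the main obstacle to be the first step of (1): converting the \emph{irreducibility} of a Newton non-degenerate branch into the combinatorial statement that its Newton polygon, computed in the given ambient non-degenerate coordinates, is a single primitive segment meeting both axes — and, within that, disposing cleanly of coordinate-axis branches, which are themselves non-degenerate, do not fit the stated form, and cannot be moved away by an arbitrary linear substitution without risking the loss of non-degeneracy. The remaining ingredients (multiplicativity of $\mathrm{in}_\omega$; the identification of $\mathrm{in}_\omega(f)$ with $f_\Phi$ for a genuine edge $\Phi$; the product-rule computation) are routine.
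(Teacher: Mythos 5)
Your argument is correct and follows essentially the same route as the paper, which establishes this theorem by invoking the equivalent characterization of non-degeneracy via initial forms: for each primitive $(p,q)$ in the tropical variety, $in_{(p,q)}(f)$ must be a constant times a monomial times a product of pairwise distinct binomials $y^p-c_jx^q$, and each such binomial corresponds to a branch in the normal form of Proposition \ref{th : new-br}. The only difference is one of self-containedness: where the paper defers to the cited references for both the equivalent definition and the branch normal form, you prove these ingredients directly (single primitive edge for an irreducible non-degenerate factor via the Newton--Puiseux/Hensel factorization, and distinctness of the $c_i$ via multiplicativity of $in_\omega$ and the product rule on the face polynomial), and you are right to flag the coordinate-axis branches as an edge case that the paper's statement also glosses over.
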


In order to prove Theorem \ref{thm: newton shape}, we recall from \cite{zbMATH06142113} an equivalent definition for Newton non-degenerate polynomials which can be described as follows. Let $f=\sum_{a,b}f_{a,b}\cdot x^ay^b\in k[x,y]$ be reduced and $w\in \mathbf{R}_{\geq 0}^2$, the initial form 
$$in_w(f) = \sum_{(a,b)\cdot w = \nu_w(f)}f_{a,b}\cdot x^ay^b$$
where $(a,b)\cdot w = aw_1+bw_2$ is the natural inner product. Then, $f$ defines a Newton Plane Curve if and only if for all $w\in \mathbf{R}_{\geq 0}^2$, the reduced algebraic set defined by $in_w(f)$ has no singularities in the torus $(k^\times)^2$.

Equivalently, $f$ is Newton non-degenerate if and only if for all coprime  $(p,q)\in T(f)\cap \mathbf{N}^2$, then $in_{(p,q)}(f)$  is the product of a non-zero constant by a monomial in $x,y$ (which might be equal to $1$) and a finite product of binomials in $x,y$:
$\prod_{j}(y^p-c_j x^q)$
where $j \mapsto c_j$ is injective.

Now, that we know the equations of the branches of a Newton plane curve and how they are related, let us consider the following example.

\begin{example}
The tropical variety of $f=(y^2-x^3)(y^3-x^2)$ is $T(f)=L_{(2,3)}\cup L_{(3,2)}$.
Denote by $\sigma=L_{(2,3)}+L_{(3,2)}$ the cone whose boundary is $T(f)$.
	The special staircase walk for $f=(y^2-x^3)(y^3-x^2)$ determines the irreducible components of $\mathcal{C}^0_m$ for  $m\in \mathbf{N}$.

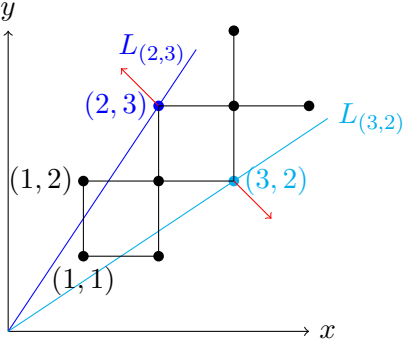
\begin{figure}[h] \centering
\begin{tikzpicture}
\draw[->] (0, 0) -- (4, 0) node[right] {$x$};
  \draw[->] (0, 0) -- (0, 4) node[above] {$y$};
  \draw[-] [scale=0.5, domain=0:5, smooth, variable=\x, blue] plot ({\x}, {1.5*\x}) node[left] {$L_{(2,3)}$};
  \draw[-] [scale=0.5, domain=0:8.5, smooth, variable=\x, cyan] plot ({\x}, {2/3*\x}) node[right] {$L_{(3,2)}$};
  \draw[blue] (2,3) node[left] {$(2,3)$} ; 
  \fill[blue] (2,3) circle[radius=2pt];
  
  \draw[cyan] (3,2) node[right] {$(3,2)$} ; 
  \fill[cyan] (3,2) circle[radius=2pt];
  
  \draw (1,1) node[below] {$(1,1)$}; 
  \fill (1,1) circle[radius=2pt];
\draw (1,2) node[left] {$(1,2)$};
\fill (1,2) circle[radius=2pt]; 

\fill (2,2) circle[radius=2pt];
\fill (2,1) circle[radius=2pt];
\draw[-] (1,1) -- (1,2);
\draw[-] (1,2) -- (2,2);
\draw[-] (2,2) -- (2,3);

\draw[-] (1,1) -- (2,1);
\draw[-] (2,1) -- (2,2);
\draw[-] (2,2) -- (3,2);
\draw[red, ->] (2,3) -- (2-0.5,3+0.5);
\draw[red, ->] (3,2) -- (3+0.5,2-0.5);

\fill (3,3) circle[radius=2pt]; 
\fill (3,4) circle[radius=2pt]; 
\fill (4,3) circle[radius=2pt]; 
\draw[-] (2,3) -- (3,3);
\draw[-] (3,2) -- (3,3);
\draw[-] (3,3) -- (3,4);
\draw[-] (3,3) -- (4,3);
\end{tikzpicture}
  \caption{Special staircase for $f=(y^2-x^3)(y^3-x^2)$} \label{walk:T(f)}
\end{figure}
\end{example}
In fact, every irreducible component $V_m\subset\mathcal{C}^0_m$ is written as $V_m=\overline{\mathbf{A}^2_m\cap F^\alpha_m}$ for some unique $\alpha\in \mathbf{N}^2$. Then we associate with $V_m$  the weighted point $(\alpha, \infty)$ if $\alpha\in T(f)$, $\nu_\alpha(f)\leq m$ and otherwise, we associate with $V_m$ the weighted point $[\alpha,\nu_\alpha(f)-1]$.
Thus, we obtain Figure \ref{walk:T(f)} from the following rules:

\begin{itemize}
\item The walker starts always at $(1,1)$,
\item if $\alpha\in \overset{\circ}{\sigma}$, then we replace $\alpha$  with $\alpha+(0,1)$ and $\alpha+(1,0)$,
\item if $\alpha\in L_{(2,3)}$, then we draw a red arrow above $\alpha$ and add $\alpha+(1,0)$,
\item if $\alpha$ is above $L_{(2,3)}$ then we replace $\alpha$ by $\alpha+(1,0)$,
\item if $\alpha\in L_{(3,2)}$, then we draw a red arrow above $\alpha$ and add $\alpha+(0,1)$,
\item if $\alpha$ is below $L_{(3,2)}$ then we replace $\alpha$ by $\alpha+(0,1)$.
\end{itemize}

The weighted point $(\alpha,w)$ represents the irreducible components $V_m=\overline{\mathbf{A}^2_m\cap F^\alpha_m}$  that "vanish" at the level $w+1$ for $m<w<\infty$. For $w=\infty$, then $V_m$ continues to $w=\infty$ without vanishing. Those $V_m$ that do not vanish, are represented by a red arrow. 

Next, we define the order of a power series $h=\sum_{k} h_k t^k \in k[[t]]\backslash \{0\}$. First, we set $ord_t(h)$ to be the least exponent $e$ such that $h_e \neq 0$. Otherwise if $h=0$, we set 
$$ord_t(0) = \infty.$$

Recall, now, from \cite{CDG} the definition of the semigroup of a reduced plane curve with $r$ branches. Let $f=f_1...f_r$ be a reduced polynomial defining such a curve with $k=\mathbf{C}$. Choose $\phi_i : \mathbf{C}_t\rightarrow \mathbf{C}_{x,y}^2$ a parametrisation (with variable $t$) of $\{f_i(x,y)=0\}$, obtained by the Newton-Puiseux theorem. Denote by $v_i(g)=(f_i,g)_0=ord_t(g\circ \phi_i)$ for $g\in \mathbf{C}[[x,y]]$.

The \emph{semigroup} of $f$ is the subsemigroup $\mathcal{S}(f)\subseteq \mathbf{N}^r_0$ whose elements are given by:
$$v(g)=[v_1(g),..., v_r(g)]$$
with $v_i(g)<\infty$ for all $g\in \mathbf{C}[x,y]$ and index $i$.

It is known \cite{waldi1} that the semigroup determines the embedded topological type of the singularity defined by $f$. When $f$ is Newton non-degenerate, we can rewrite this fact as follows.
\begin{theorem} \label{th : semigrp}
	Let $f=f_1...f_r\in \mathcal{O}_{k^2}$ be a Newton non-degenerate product of $r$ branches and let $g=g_1...g_r$ be another such polynomial with the same number of branches. Assume
	$$T(f_i)=T(g_i)$$
    for $1\leq i\leq r$ 
	and for $j\neq i$,
	$$(f_i,f_j)_0 = (g_i,g_j)_0$$
are equal. Then $f$ and $g$ have the same embedded topological type of the singularity at $0$.
\end{theorem}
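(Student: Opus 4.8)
The plan is to reduce the statement to the classical fact (cited as \cite{waldi1}) that the semigroup $\mathcal{S}(f)\subseteq \mathbf{N}^r_0$ of a reduced plane curve determines the embedded topological type of the singularity. Concretely, it suffices to prove that the two hypotheses — $T(f_i)=T(g_i)$ for each $i$ and $(f_i,f_j)_0=(g_i,g_j)_0$ for $i\neq j$ — together force $\mathcal{S}(f)=\mathcal{S}(g)$. I would proceed branch by branch and then assemble.

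First I would treat a single Newton non-degenerate branch. By Proposition \ref{th : new-br}, $f_i$ is, up to isomorphism, $y^{p_i}-c_ix^{q_i}+(\text{higher terms})$, so its Newton polygon — equivalently its tropical variety $T(f_i)=L_{(p_i,q_i)}$ — records exactly the pair $(p_i,q_i)$, and since a Newton non-degenerate branch has a single Puiseux characteristic pair, this pair is $(\beta_0,\beta_1)$ in the usual normalization. Hence the semigroup $\mathcal{S}(f_i)\subseteq\mathbf{N}_0$ is the numerical semigroup generated by $p_i$ and $q_i$, which depends only on $T(f_i)$. So the hypothesis $T(f_i)=T(g_i)$ gives $\mathcal{S}(f_i)=\mathcal{S}(g_i)$, i.e. each branch of $f$ has the same embedded topological type as the corresponding branch of $g$.

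Next I would assemble the global semigroup. For a curve with $r$ branches, $\mathcal{S}(f)$ is determined by the individual semigroups $\mathcal{S}(f_i)$ together with the pairwise intersection multiplicities $(f_i,f_j)_0$: this is the content of the structure theorems for semigroups of reducible plane curves in \cite{CDG} (the maximal contact / coincidence data between two branches of a Newton non-degenerate curve is encoded in $(f_i,f_j)_0$ precisely because, by Theorem \ref{thm: newton shape}(2), two branches with the same pair $(p,q)$ are distinguished only by the coefficient $c_j$, so their intersection behavior is governed by the order of contact, hence by $(f_i,f_j)_0$ alone, while branches with distinct $(p,q)$ have contact controlled by the tropical data already fixed). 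Therefore the two hypotheses of the theorem determine all the data entering $\mathcal{S}(f)$, so $\mathcal{S}(f)=\mathcal{S}(g)$; invoking \cite{waldi1} finishes the proof.

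The main obstacle I anticipate is the last step: showing cleanly that for Newton non-degenerate curves the pairwise intersection numbers $(f_i,f_j)_0$, together with the branch tropical varieties, genuinely suffice to reconstruct the full semigroup $\mathcal{S}(f)$ — in general the semigroup of a reducible curve depends on more refined "contact" data among the branches, and one must argue that Newton non-degeneracy (via the explicit normal forms of Proposition \ref{th : new-br} and Theorem \ref{thm: newton shape}) collapses that extra data down to exactly the intersection multiplicities. I would isolate this as a lemma, proving it by computing $(f_i,f_j)_0$ directly from the Puiseux parametrizations $\phi_i$, $\phi_j$ (whose leading exponents are fixed by $T(f_i)$, $T(f_j)$) and checking that the coincidence exponent between the two branches is recovered from that single number together with $(p_i,q_i)$ and $(p_j,q_j)$.
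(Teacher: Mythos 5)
Your proposal is correct and follows exactly the route the paper intends: the paper offers no separate argument for this theorem, presenting it as a direct restatement of the classical fact from \cite{waldi1} that the semigroup (equivalently, the branch topological types together with the pairwise intersection multiplicities) determines the embedded topological type, combined with the observation that $T(f_i)$ fixes the single Puiseux pair $(p_i,q_i)$, hence the semigroup $\langle p_i,q_i\rangle$, of each branch. The ``obstacle'' you flag in the last step is itself classical --- for reduced plane curve germs the embedded topological type is equivalent data to the characteristic exponents of the branches plus the pairwise intersection numbers, with no further contact data needed --- so there is no genuine gap.
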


\section{Useful Results} \label{sec:crucial}

In this part of the article, we shall present the key results that are very useful in proving our theorems in the remaining sections.

We say that two polynomials $f,g\in k[x_1,..., x_n]$ are \textit{congruent modulo a variety} $W\subset \mathbf{A}^\infty$ (which we write $f=g \mod W$) if they are congruent modulo the defining ideal of $W$ in $k[x_1,x_2, ...]$.

Now for $\alpha\in \mathbf{R}^2$, we set $|\alpha|=\alpha_1+\alpha_2$. With this in mind,  we can introduce the terminology, $f=g \mod \alpha$ when $W=\mathcal{H}^\alpha_\infty$.  But first we need the following observation which will be useful for later.
\begin{observation} \label{obs:branch}
	Let $f\in \mathcal{O}_{k^2}$ define a non-degenerate Newton plane branch. Let $\alpha = s(p,q)\in T(f)$ with $gcd(p,q)=1$. Then, $\nu_\alpha(f)>|\alpha|$ if $p,q>1$.
\end{observation}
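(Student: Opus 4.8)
The plan is to compute $\nu_\alpha(f)$ explicitly and then reduce the statement to an elementary inequality in $p$ and $q$.

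First I would record the shape of $f$. By Proposition~\ref{th : new-br} (equivalently, by the definition of a Newton plane branch) we may take
$$f = y^p - cx^q + \sum_{pa+qb > pq} c_{a,b}\, x^a y^b,$$
so that $\Gamma(f,x,y)$ has a single compact face $\Phi$, namely the segment joining $(q,0)$ and $(0,p)$, and the cone dual to $\Phi$ is $L_{(p,q)} = T(f)$. Since $\gcd(p,q)=1$, the lattice points of $L_{(p,q)}$ are precisely the points $s(p,q)$ with $s\in\mathbf{N}_0$; writing $\alpha = s(p,q)$ we have $s\geq 1$, because $\alpha_1=sp$ and $\alpha_2=sq$ are both positive.

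Next I would evaluate $\nu_\alpha(f)=\min\{\,\alpha\cdot(a,b) : c_{a,b}\neq 0\,\}$ term by term. For the exponent $(0,p)$ of $y^p$ and the exponent $(q,0)$ of $x^q$ one gets $\alpha\cdot(0,p)=\alpha\cdot(q,0)=spq$; for every remaining monomial $x^ay^b$ occurring in $f$ one has $pa+qb>pq$ by hypothesis, hence $\alpha\cdot(a,b)=s(pa+qb)>spq$. Thus the minimum is attained exactly along $\Phi$, and $\nu_\alpha(f)=spq$.

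Finally, since $|\alpha|=\alpha_1+\alpha_2=s(p+q)$, the claimed inequality $\nu_\alpha(f)>|\alpha|$ becomes $spq>s(p+q)$, i.e. $pq>p+q$, i.e. $(p-1)(q-1)>1$. As $p,q>1$ are coprime they cannot both equal $2$, so one of $p-1,\,q-1$ is $\geq 2$ and the other is $\geq 1$; hence $(p-1)(q-1)\geq 2>1$, which finishes the proof. There is no real obstacle here: the only step that uses the Newton-non-degenerate set-up is the identification $\nu_\alpha(f)=spq$, which follows from $\alpha$ lying on the cone dual to the compact segment of the Newton polygon; everything afterwards is arithmetic. (The computation also makes the hypothesis $p,q>1$ transparent: if, say, $p=1$, then $\nu_\alpha(f)=sq<s(1+q)=|\alpha|$, so the inequality would fail.)
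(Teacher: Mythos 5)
Your proof is correct and follows essentially the same route as the paper's: identify $\nu_\alpha(f)=spq$ from the initial form along the compact face and compare with $|\alpha|=s(p+q)$. You are in fact more careful than the paper at the final arithmetic step, where the inequality $pq>p+q$ genuinely needs the coprimality of $p,q$ to rule out $p=q=2$ — a point the paper's one-line proof silently skips.
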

\begin{proof}
	Write $in_\alpha(f)=y^p-cx^q$. Then, $pq>p+q$ since $p,q>1$. 
	$$\nu_\alpha(f)=spq>s(p+q)=|\alpha|.$$
This proves our observation.
\end{proof} 

The next lemma will be used (in the next section) in proving Theorem \ref{th : decom}.
\begin{lemma} \label{lm : T}
	Let $f\in \mathcal{O}_{k^2}$ define a non-degenerate plane curve with $T(f)\simeq \mathbf{R}_{\geq 0}$. Set $f=f_1...f_r$ to be the decomposition of $f$ into branches. Let $\alpha\in T(f)\cap \mathbf{N}^2$ and $m\geq \nu_\alpha(f)$. Then we have
	$$F^\alpha_{m}=\bigsqcup_{i=1}^r F^\alpha_{i,\nu_\alpha(f_i)+m-\nu_\alpha(f)}.$$
\end{lemma}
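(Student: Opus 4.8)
The statement asserts a decomposition of the locally closed set $F^\alpha_m$ into a disjoint union of analogous sets attached to the branches $f_i$, with carefully shifted jet levels. The natural approach is to work with the arc-space picture: recall that $F^\alpha_m = \{x^{(\alpha_1)}\neq 0\}\cap\{y^{(\alpha_2)}\neq 0\}\cap F_m \cap \mathcal{H}^\alpha_\infty$, so a point of $F^\alpha_m$ is (essentially) an arc $\phi(t)=(x(t),y(t))$ with $x(t)=x^{(\alpha_1)}t^{\alpha_1}+\cdots$, $y(t)=y^{(\alpha_2)}t^{\alpha_2}+\cdots$, both leading coefficients nonzero, and $\mathrm{ord}_t f(\phi(t))\geq m+1$. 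Since $\alpha\in T(f)\cap\mathbf{N}^2$ lies on the (unique, since $T(f)\simeq\mathbf{R}_{\geq 0}$) ray, say $\alpha=s(p,q)$, the key computation is to understand $\mathrm{ord}_t f(\phi(t))$ in terms of the contributions of the individual branches.

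\textbf{Key steps.} First I would pin down the leading behaviour: for any arc $\phi$ in $\mathcal{H}^\alpha_\infty$ with the nonvanishing conditions, $\mathrm{ord}_t f(\phi(t)) = \sum_{i=1}^r \mathrm{ord}_t f_i(\phi(t))$, and for each $i$ one has $\mathrm{ord}_t f_i(\phi(t)) \geq \nu_\alpha(f_i)$, with the deficiency $\mathrm{ord}_t f_i(\phi(t)) - \nu_\alpha(f_i)$ measuring how much the truncation of $\phi$ beyond the initial terms interacts with the initial form $in_\alpha(f_i) = y^p - c_i x^q$. Summing over $i$ gives $\mathrm{ord}_t f(\phi(t)) - \nu_\alpha(f) = \sum_i \big(\mathrm{ord}_t f_i(\phi(t)) - \nu_\alpha(f_i)\big)$. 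Second, because the $c_i$ are pairwise distinct (Theorem \ref{thm: newton shape}), at most one factor $y^p - c_i x^q$ can vanish to higher order along $\phi$: if $\phi$ makes $y(t)^p - c_i x(t)^q$ have order strictly bigger than $\nu_\alpha(f_i)$ for one index $i$, then for every $j\neq i$ the order of $y(t)^p - c_j x(t)^q$ along $\phi$ stays exactly equal to $\nu_\alpha(f_j)$ (the leading terms cannot simultaneously cancel for two different constants). This is what produces the \emph{disjointness} of the union: the index $i$ for which the extra contact occurs is determined by $\phi$, and it partitions $F^\alpha_m$. Third, with this index $i$ fixed, the condition $\mathrm{ord}_t f(\phi(t))\geq m+1$ becomes $\mathrm{ord}_t f_i(\phi(t)) \geq m+1 - \sum_{j\neq i}\nu_\alpha(f_j) = m+1 + \nu_\alpha(f_i) - \nu_\alpha(f)$, i.e. exactly the defining inequality for $F^\alpha_{i,\,\nu_\alpha(f_i)+m-\nu_\alpha(f)}$; this identifies the $i$-th piece of the partition with $F^\alpha_{i,\,\nu_\alpha(f_i)+m-\nu_\alpha(f)}$. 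One must also check the hypothesis $m\geq \nu_\alpha(f)$ guarantees $\nu_\alpha(f_i)+m-\nu_\alpha(f)\geq \nu_\alpha(f_i)$, so the shifted level is admissible and the nonvanishing conditions $x^{(\alpha_1)}\neq 0$, $y^{(\alpha_2)}\neq 0$ match up on both sides.

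\textbf{Main obstacle.} The delicate point is the disjointness argument: I need to argue rigorously that along a single arc the initial forms $y^p - c_i x^q$ of two distinct branches cannot both drop in order below the generic value, i.e. that the "extra-contact" index is well-defined. Concretely, if $x(t)=x^{(\alpha_1)}t^{\alpha_1}(1+\cdots)$ and $y(t)=y^{(\alpha_2)}t^{\alpha_2}(1+\cdots)$, then $y(t)^p/x(t)^q$ is a well-defined element of $k[[t]]$ (using $pq\cdot$(something) bookkeeping from $\alpha=s(p,q)$) whose constant term is $\lambda:=(y^{(\alpha_2)})^p/(x^{(\alpha_1)})^q$, and $\mathrm{ord}_t(y^p-c_i x^q)$ exceeds $\nu_\alpha(f_i)$ precisely when $\lambda = c_i$; since the $c_i$ are distinct this happens for at most one $i$. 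I expect this to be the step requiring the most care to phrase correctly (including the trivial-fibration bookkeeping relating $F^\alpha_m \subset \mathbf{A}^2_\infty$ to the finite-level jet scheme), while the order-additivity and level-shift computations are routine. The reducibility of the target is not an issue since $T(f)$ is a single ray, so there is exactly one relevant $(p,q)$.
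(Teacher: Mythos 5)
Your proof is correct, and it rests on the same two facts as the paper's argument --- additivity of the order of contact over the branches, and the pairwise coprimality of the initial forms $in_\alpha(f_i)$ (equivalently, the distinctness of the constants $c_i$) --- but it is organized differently. The paper proceeds by induction on $m$: the base case $m=\nu_\alpha(f)$ splits $F^\alpha_{\nu_\alpha(f)}$ using the factorization $f^{[\nu_\alpha(f)]}=\prod_i f_i^{[\nu_\alpha(f_i)]}$ modulo $\mathcal{H}^\alpha_\infty$ into pairwise coprime non-monomial factors, and the inductive step peels off one equation at a time via the congruence $f^{(m)}\equiv f_i^{(k)}\prod_{j\neq i}f_j^{[\nu_\alpha(f_j)]}$ modulo the ideal of $F^\alpha_{i,k-1}$, on which the second factor is invertible. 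Your route is a one-shot valuative computation on arcs, which makes the disjointness completely transparent: the constant term $\lambda$ of $y(t)^p/x(t)^q$ can equal at most one $c_i$. The induction keeps the paper entirely at the level of the defining equations; your version is shorter and explains more directly \emph{why} exactly one branch absorbs all the excess contact. Two small points to make explicit if you write this up: (i) the identity of these subsets of $\mathbf{A}^2_\infty$ may indeed be checked on $K$-points for all fields $K$, since all sets involved are reduced and cut out by finitely many polynomials in finitely many coordinates; and (ii) an arc with no excess contact on any branch has $\mathrm{ord}_t f(\phi)=\nu_\alpha(f)\leq m<m+1$ and hence lies on neither side of the claimed equality --- this is the case your partition argument leaves implicit.
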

\begin{proof}
By induction on $m\geq \nu_\alpha(f)$.
\begin{itemize}
	\item At $m=\nu_\alpha(f)$, we have the following:
	$$f^{[\nu_\alpha(f)]}=f_1^{[\nu_\alpha(f_1)]} ... f_r^{[\nu_\alpha(f_r)]} \mod \alpha$$
	and since $T(f)$ is the semi-line that passes through $(0,0)$ and $\alpha$, then for all $i$ it holds that
	$$f_i^{[\nu_\alpha(f_i)]} = in_{\alpha}(f_i)(x^{(\alpha_1)}, y^{\alpha_2}) \mod \alpha$$
	which is clearly not a monomial.
	Moreover, the $in_\alpha(f_j)$ are pairwise coprime as polynomials since $f$ is Newton non-degenerate. This allows us to conclude that:
	$$F^\alpha_{\nu_\alpha(f)} = \bigsqcup_{i=1}^r F^\alpha_{i,\nu_\alpha(f_i)}, $$
	which proves the initial case $m=\nu_\alpha(f)$.
	\item Suppose that for $m-1$, we have:
	$$F^\alpha_{m-1}=\bigsqcup_{i=1}^r F^\alpha_{i,\nu_\alpha(f_i)+m-1-\nu_{\alpha}(f)}.$$
	Then, as $F^\alpha_m = F^\alpha_{m-1} \cap \{f^{(m)}=0\}$, we need to show that:
	$$F^\alpha_{i,\nu_\alpha(f_i)+m-1-\nu_\alpha(f)}\cap \{f^{(m)}=0\} = F^\alpha_{i,\nu_\alpha(f_i)+m-\nu_\alpha(f)} .$$
	However, this can be seen from the fact that if $k=\nu_\alpha(f_i)+m-\nu_\alpha(f)$ then
	$$f^{(m)} = f^{(k)}_i \prod_{j\neq i} f^{[\nu_\alpha(f_i)]}_i \mod F^\alpha_{i,k-1}.$$
	So the proposition is true for $m$, and this concludes the induction.	
\end{itemize}

\end{proof}

In the next section, we shall show that the irreducible components of the jet schemes $\mathcal{C}^0_m$ (over the origin)  of Newton plane curve singularities $\mathcal{C}=\{f=0\}\subset \mathbf{A}^2$ are of two kinds:
\begin{itemize}
	\item the ones of the form $\mathcal{H}^\alpha_m$ that are irreducible and having $|\alpha|$ as codimension. In this case, the $\alpha$'s are chosen so  that there is no $\beta\leq \alpha$ component-wise with $\alpha\neq\beta$. These $\mathcal{H}^\alpha_m$ are said to have hyperplane coordinates;
	\item the ones of the form $\overline{G^\alpha_k\cap \mathbf{A}^2_m}$ where  $\{g=0\} \subset \mathcal{C}$ is a branch, $m\geq \nu_\alpha(f)$, $k=\nu_\alpha(g)+m-\nu_\alpha(f)$ and $\alpha\in T(g)\cap \mathbf{N}^2$; these are called infinite components.
\end{itemize}

The next lemma is concerned with determining  their codimensions and verifying their irreducibility.
\begin{lemma} \label{lm : codim+irr}
	Let $f\in \mathcal{O}_{k^2}$ define a Newton plane branch. Let $\alpha\in T(f)\cap \mathbf{N}^2$ and $\nu_\alpha(f)\leq k\leq m$. Then, we have $$codim(F^\alpha_k\cap \mathbf{A}^2_m ; \mathbf{A}^2_m)=|\alpha|+k-\nu_\alpha(f)+1$$ 
	and $F^\alpha_k\cap \mathbf{A}^2_m$ is irreducible.
\end{lemma}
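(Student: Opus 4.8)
The plan is to compute $F^\alpha_k\cap\mathbf A^2_m$ explicitly by choosing coordinates on the arc space adapted to $\alpha$, and then to recognize the resulting scheme as a product of an affine space with a (quasi-affine) variety whose irreducibility is easy to see. Write $\alpha=s(p,q)$ with $\gcd(p,q)=1$ (the case $T(f)\simeq\mathbf R_{\ge0}$ forces $\alpha$ to lie on a single ray, so this makes sense), and set $N=\nu_\alpha(f)=spq$. First I would recall that on $\mathcal H^\alpha_\infty$ the variables $x^{(0)},\dots,x^{(\alpha_1-1)},y^{(0)},\dots,y^{(\alpha_2-1)}$ vanish, and that the free coordinates on $\mathcal H^\alpha_m$ are $x^{(\alpha_1)},\dots,x^{(m)},y^{(\alpha_2)},\dots,y^{(m)}$, so that $\mathcal H^\alpha_m$ is an affine space of dimension $(m-\alpha_1+1)+(m-\alpha_2+1)=2m+2-|\alpha|$, i.e. of codimension $|\alpha|$ in $\mathbf A^2_m$. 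Then $F^\alpha_k\cap\mathbf A^2_m$ is cut out inside $\mathcal H^\alpha_m$ by the equations $f^{(j)}=0$ for $\nu_\alpha(f)\le j\le k$ together with the two open conditions $x^{(\alpha_1)}\neq0$, $y^{(\alpha_2)}\neq0$. (For $j<\nu_\alpha(f)$ the coefficient $f^{(j)}$ vanishes identically modulo $\mathcal H^\alpha_\infty$, by definition of $\nu_\alpha$.)

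The key computation, which I would isolate as the technical core, is the triangular structure of the system $\{f^{(j)}=0\}_{N\le j\le k}$. Expanding $f\big(\sum_i x^{(i)}t^i,\sum_i y^{(i)}t^i\big)=\sum_j f^{(j)}t^j$ and using Newton non-degeneracy of the branch, the lowest-order coefficient is
$$f^{(N)}=\operatorname{in}_\alpha(f)\big(x^{(\alpha_1)},y^{(\alpha_2)}\big)=\big(y^{(\alpha_2)}\big)^p-c\,\big(x^{(\alpha_1)}\big)^q,$$
a polynomial in the two lowest free variables only. For $N<j\le k$, the coefficient $f^{(j)}$ has the form
$$f^{(j)}=p\,\big(y^{(\alpha_2)}\big)^{p-1}y^{(\,\alpha_2+j-N)}-c\,q\,\big(x^{(\alpha_1)}\big)^{q-1}x^{(\,\alpha_1+j-N)}+(\text{terms in strictly lower-indexed }x^{(i)},y^{(i)}),$$
so that, on the open set where $x^{(\alpha_1)}\ne0$ and $y^{(\alpha_2)}\ne0$, each equation $f^{(j)}=0$ determines $y^{(\alpha_2+j-N)}$ (equivalently $x^{(\alpha_1+j-N)}$) as a regular function of the previously introduced variables — here one uses $p\big(y^{(\alpha_2)}\big)^{p-1}\neq0$, which needs $\operatorname{char} k\nmid p$; if that fails one divides out the other way using $\operatorname{char}k\nmid q$, and coprimality of $p,q$ guarantees at least one of these holds. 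Thus the $k-N$ equations $f^{(N+1)}=\dots=f^{(k)}=0$ form a regular sequence that simply eliminates $k-N$ of the free coordinates, contributing codimension $k-N$ and no loss of irreducibility. The remaining locus is $\{(y^{(\alpha_2)})^p=c(x^{(\alpha_1)})^q,\ x^{(\alpha_1)}\neq0,\ y^{(\alpha_2)}\neq0\}$ in the two distinguished coordinates, crossed with an affine space in all the not-yet-eliminated coordinates.

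To finish I would observe that the plane curve $\{Y^p=cX^q\}$ with the coordinate axes removed is irreducible: since $\gcd(p,q)=1$ it is the image of the injection $t\mapsto(t^p,c^{1/p}t^q)$ from $k^\times$ (or, coordinate-freely, $\operatorname{Spec}k[X^{\pm1},Y^{\pm1}]/(Y^p-cX^q)$ is a torus, hence irreducible), so the whole set $F^\alpha_k\cap\mathbf A^2_m$, being this times an affine space, is irreducible. Counting codimensions: $\mathcal H^\alpha_m$ has codimension $|\alpha|$ in $\mathbf A^2_m$; inside it, the single equation $f^{(N)}=0$ (which is not a monomial and is not a zero-divisor on the ambient affine space intersected with the two open conditions) drops dimension by $1$; the equations $f^{(N+1)},\dots,f^{(k)}$ drop it by a further $k-N$. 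Altogether $\operatorname{codim}(F^\alpha_k\cap\mathbf A^2_m;\mathbf A^2_m)=|\alpha|+1+(k-\nu_\alpha(f))$, as claimed. The main obstacle I anticipate is making the "triangular elimination" claim fully rigorous — precisely, showing that for $N<j\le k$ the coefficient of the newly appearing variable in $f^{(j)}$ is a unit on the open set $\{x^{(\alpha_1)}\neq0\}\cap\{y^{(\alpha_2)}\neq0\}$ and that the lower-order correction terms involve only strictly earlier variables; this requires a careful bookkeeping of which monomials of $f$ above the Newton segment can contribute to $f^{(j)}$ and at which index, and a small case distinction on the characteristic.
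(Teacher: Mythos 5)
Your proof is correct, but it takes a different (and more self-contained) route than the paper. The paper's own proof is a two-line reduction: it observes that $F^\alpha_k\cap \mathbf{A}^2_m=\pi^{-1}(F^\alpha_k\cap\mathbf{A}^2_k)$ for the truncation $\pi:\mathbf{A}^2_m\to\mathbf{A}^2_k$ (a trivial affine fibration, so irreducibility and codimension transfer), and then cites Lemma 4.3 of \cite{M_eq} for the level-$k$ statement. What you have done is essentially inline the proof of that cited lemma: the triangular structure of $f^{(N)},f^{(N+1)},\dots,f^{(k)}$ modulo $\mathcal{H}^\alpha_\infty$ on the open set $\{x^{(\alpha_1)}\neq 0,\ y^{(\alpha_2)}\neq 0\}$, with $f^{(N)}=\mathrm{in}_\alpha(f)(x^{(\alpha_1)},y^{(\alpha_2)})$ cutting out an irreducible torus coset (irreducible in any characteristic since $\gcd(p,q)=1$ makes the kernel of $(X,Y)\mapsto Y^pX^{-q}$ connected) and each subsequent $f^{(j)}$ eliminating one new variable with unit coefficient. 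The bookkeeping you flag as the main obstacle does go through: writing $\alpha=s(p,q)$ and $N=spq$, a monomial $x^ay^b$ of $f$ contributes to $f^{(j)}$ only variables $x^{(i)}$ with $i\le j-\nu_\alpha(x^ay^b)+\alpha_1$ (and similarly for $y$), so monomials strictly above the Newton segment contribute only strictly lower-indexed variables, and your characteristic dichotomy is sound because $\gcd(p,q)=1$ prevents $\operatorname{char}k$ from dividing both exponents. Your approach buys a self-contained argument at the cost of redoing the computation that \cite{M_eq} already packages; the paper's approach is shorter but only works because the base case is available in the literature with exactly the right normalization of codimension.
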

\begin{proof}
	
	Denote by $\pi : \mathbf{A}^2_m \rightarrow \mathbf{A}^2_k$ the truncation morphism of the jet schemes of $\mathbf{A}^2$. They are defined as $\pi(x_0,..., x_m,y_0,..., y_m) = (x_0,..., x_k,y_0,..., y_k)$ where $v_i = v^{(i)}$ if $v=x$ or $v=y$. Notice that
	$$F^\alpha_k\cap \mathbf{A}^2_m = \pi^{-1}(F^\alpha_k\cap \mathbf{A}^2_k).$$
	In order to have $F^\alpha_k\cap \mathbf{A}^2_m$ irreducible, it's enough to have $F^\alpha_k\cap \mathbf{A}^2_k$ irreducible. Since $f$ defines a Newton plane branch, then $F^\alpha_k\cap \mathbf{A}^2_k$ is irreducible by Lemma 4.3 of \cite{M_eq} and using our notation it has codimension
	$|\alpha|+k-\nu_\alpha(f)+1$. Note that at the level $m$, the irreducibility of $F_k^\alpha\cap \mathbf{A}^2_m$ is preserved as well as the codimension since it has the same defining equations as $F^\alpha_k\cap \mathbf{A}^2_k$.
\end{proof}

To distinguish the candidates for the irreducible components of $\mathcal{C}^0_m$ for all $m$, we only need to compare them (by inclusion) as their irreducibility has been proven in the previous result.
\\ \\
First, let us remind the reader with our notation:  $\mathbf{A}^2_m$ is the $m$-jet space of $\mathbf{A}^2$ with variables $x^{(0)},..., x^{(m)},y^{(0)},..., y^{(m)}$ and for $\alpha\in \mathbf{N}^2$, 
\begin{itemize}
	\item $\mathcal{H}^\alpha_m = \{x^{(i)}=y^{(j)} = 0 : 0\leq i<\alpha_1, 0\leq j<\alpha_2\}\subset \mathbf{A}^2_m$;
	\item Let $g$ define a branch in $\mathbf{A}^2$ and $\nu_\alpha(g)\leq k\leq m$, Next, set
	$$\mathbf{A}^2_m\cap G^\alpha_k = \{x^{(\alpha_1)}\neq 0\}\cap \{y^{(\alpha_2)}\neq 0\}\cap \mathcal{H}^\alpha_m\cap \{g^{[\nu_\alpha(g)]}=...=g^{(k)}=0\}.$$
\end{itemize}

With this in mind, we can sum up the purpose of this section in Table 2 which deals with how to determine the irreducible components of $\mathcal{C}^0_m$. Whenever $\mathcal{H}^\gamma_m$ is mentioned we impose the condition $\nu_\gamma(f)>m$ and whenever 
$$\mathbf{A}^2_m\cap F^\alpha_{i,a}$$ 
corresponds to a branch defined by a factor $f_i$ of $f$ with $\alpha\in T(f_i)$, we impose the condition $m\geq \nu_\alpha(f)$ and  $a=\nu_\alpha(f_i)+m-\nu_\alpha(f)$.


\begin{table}[ht] 

\begin{tabular}{|l|l|l|l|}
\hline
$A$ & $B$ & when does $A\subseteq B?$ & reason behind $A\subseteq B$\\ \hline
                      $\mathcal{H}^{\alpha}_m$ &    $\mathcal{H}^\beta_m$ & $\alpha\geq \beta$ component-wise &                                   \\ \hline
                     $\mathcal{H}^{\gamma}_m$ &    $\overline{\mathbf{A}^2_m\cap F^\alpha_{i,a}}$ & $\gamma=\alpha+(1,1)$, $m=\nu_\alpha(f)$ &                                         this holds when $T(f) = T(f_i)$                       \\ \hline
                     $\overline{\mathbf{A}^2_m\cap F^\alpha_{i,a}}$ &  $\mathcal{H}^\gamma_m$   & never & otherwise, $\alpha\geq\gamma$ absurd                                                              \\ \hline
                      $\overline{\mathbf{A}^2_m\cap F^\alpha_{i,a}}$ &  $\overline{\mathbf{A}^2_m\cap F^\beta_{j,b}}$   & $\alpha=\beta$ and $ f_i = f_j$ & compare codimensions                                                             \\ \hline
                   
\end{tabular}
    
\caption{Table describing when the candidates $A,B \subseteq \mathcal{C}^0_m$ verify $A\subseteq B$.} 
\end{table} 

\begin{lemma} \label{lm : minimal}
    Let $f\in \mathcal{O}_{k^2}$ define a Newton plane curve singularity, $\alpha\in T(f)$, with $\nu_\alpha(f)\leq m$. Then, there is no $(1,1)\leq \lambda<\alpha$ with $\nu_\lambda(f)>m-1$ i.e. $\mathcal{H}^\lambda_{m-1}\subseteq \mathcal{C}^0_{m-1}$.
\end{lemma}
\begin{proof}
    Suppose such  $\lambda$ exists. Then, $\nu_\lambda(f)\geq m\geq \nu_\alpha(f)$. However, as $\lambda<\alpha$ then $\nu_\lambda(f)\leq\nu_\alpha(f)$ and $m=\nu_\alpha(f) = \nu_\lambda(f)$. Let's set $H=\mathcal{H}^{\lambda+\varepsilon}_m$ and $V_i = \mathcal{H}^\alpha_m\cap \{f^{[\nu_\alpha(f_i)]}_i=0\}$ where $f_i$ is an irreducible factor of $f$ with $\alpha\in T(f_i)$, and
    $$\mathcal{H}^\alpha_{m-1}\subsetneq\mathcal{H}^\lambda_{m-1}\subseteq \mathcal{C}^0_{m-1},$$
    where $\varepsilon\in \{(1,0),(0,1)\}$ and $\pi_{m,m-1}:\mathcal{C}^0_m\rightarrow \mathcal{C}^0_{m-1}$ is the truncation morphism. By considering the preimage $\pi^{-1}_{m,m-1}$, we obtain $$V_i\subsetneq \mathcal{H}^\alpha_m\cap \{f^{(m)}=0\}\subseteq H.$$ 
   Since $H$ is irreducible and $V_i$ does not contain $H\backslash V_i$, then $H=\overline{H\backslash V_i}$ and
   $$(x^{(0)},..., x^{(\lambda_1-1)},y^{(0)},..., y^{(\lambda_2-1)})=\mathcal{I}_{H}=\{r\in \mathcal{O}_{\mathbf{A}^2_m} : r\cdot \mathcal{I}_{V_i}\subseteq \mathcal{I}_H\}$$ 
   is an equality for the defining ideals $\mathcal{I}_H = (\mathcal{I}_H : \mathcal{I}_{V_i})$ (colon ideal). By denoting  $r=x^{(\lambda_1-1)}$ and $g=in_\alpha(f_i)(x^{(\alpha_1)},y^{(\alpha_2)})\in \mathcal{I}_{V_i}$ in
   $r\cdot g\in \mathcal{I}_{H}$, then we can write $r\cdot g = \sum_{i=0}^{\alpha_1-1}x^{(i)}X_i+\sum_{j=0}^{\alpha_2-1}y^{(j)}Y_j$. Moreover, by annihilating the variables $x^{(0)},..., x^{(\lambda_1-2)}, y^{(0)},..., y^{(\alpha_2-1)}$, we get
   $$x^{(\lambda_1-1)}[{y^{(\alpha_2)}}^p-c_i\cdot {x^{(\alpha_1)}}^q]=\sum_{i=\lambda_1-1}^{\alpha_1-1}x^{(i)}X'_i.$$
   Now, we distinguish the following 2 cases.
   \begin{itemize}
       \item If $\alpha_1>\lambda_1$ then by annihilating $x^{(\alpha_1)},x^{(\alpha_1+1)},..., x^{(m)},y^{(\alpha_2+1)},..., y^{(m)}$, we notice that
   $$x^{(\lambda_1-1)}{y^{(\alpha_2)}}^p=\sum_{i=\lambda_1-1}^{\alpha_1-1}x^{(i)}X''_i.$$
   So annihilating the variables
   $x^{(\lambda_1)},..., x^{(\alpha_1-1)}$ leads to a contradiction because the non-zero monomial $x^{(\lambda_1-1)}{y^{(\alpha_2)}}^p=0$ in a power series ring.
       \item If $\alpha_1=\lambda_1$ then $\alpha_2>\lambda_2$.  By taking $r'=y^{(\lambda_2-1
       )}$ and $g'=g$,  we can write
       $$r'g=y^{(\lambda_2-1)}\cdot [{y^{(\alpha_2)}}^p-c_i{x^{(\alpha_1)}}^q]=\sum_{i=0}^{\alpha_1-1}x^{(i)}U_i+\sum_{j=0}^{\alpha_2-1}y^{(j)}V_j,$$
       which after annihilating the variables
       $$x^{(0)},..., x^{(\alpha_1-1)},y^{(j)} : j\neq \lambda_2-1$$
       leads to  $-c_i\cdot y^{(\lambda_2-1)}{x^{(\alpha_1)}}^q=0$ (in a power series ring)  which  can not be the case.
   \end{itemize}
   Therefore, in both cases we arrived at a contradiction. Thus, the proof is complete.
\end{proof}

\begin{theorem} \label{th : M1+}
Let $f\in \mathcal{O}_{k^2}$ define a Newton plane curve singularity such that $T(f)=T(f_i)$ where $f_i$ defines a branch of $f$. 
	Let $\alpha,\beta\in T(f)\cap \mathbf{N}^2$ and $\gamma= \beta+(1,1)$ and suppose $\nu_\gamma(f)>m\geq \nu_\alpha(f)$. Then, we have 
	$$\mathcal{H}^{\gamma}_m \subseteq \overline{F^\alpha_{i,\nu_\alpha(f_i)+m-\nu_\alpha(f)}\cap \mathbf{A}^2_m}$$
	if and only if $\alpha=\beta$ and $m=\nu_\alpha(f)$. Moreover, the inclusion on the left is always strict. \end{theorem}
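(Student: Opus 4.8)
The plan is to handle the two implications separately, reducing both to a study of the irreducible variety $V:=\overline{F^{\alpha}_{i,\nu_\alpha(f_i)+m-\nu_\alpha(f)}\cap\mathbf{A}^2_m}$ through the contact equations cutting it out. Write $T(f_i)=\mathbf{R}_{\geq 0}(p,q)$ with $\gcd(p,q)=1$ and, interchanging $x$ and $y$ if necessary, $p\leq q$; write $\alpha=a(p,q)$ and $\beta=b(p,q)$ with $a,b\in\mathbf{N}_0$, so that $\gamma=(bp+1,bq+1)$. Put $k:=\nu_\alpha(f_i)+m-\nu_\alpha(f)$; one has $\nu_\alpha(f_i)\leq k\leq m$, because $\alpha\in T(f_i)$ gives $\nu_\alpha(f_i)\leq\nu_\alpha(f)$ and $m\geq\nu_\alpha(f)$ by hypothesis.

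For the ``if'' direction, assume $\alpha=\beta$ and $m=\nu_\alpha(f)$. Then $\gamma=\alpha+(1,1)$ and $k=\nu_\alpha(f_i)$, so inside $\mathcal{H}^\alpha_m$ the set $F^\alpha_{i,k}\cap\mathbf{A}^2_m$ is cut out by the single equation $f_i^{[\nu_\alpha(f_i)]}=(y^{(\alpha_2)})^p-c\,(x^{(\alpha_1)})^q \bmod \alpha$ together with $x^{(\alpha_1)}y^{(\alpha_2)}\neq 0$. Since $y^p-cx^q$ is irreducible for coprime $p,q$ with $c\neq 0$ (or linear when $p=q=1$), the hypersurface $\{(y^{(\alpha_2)})^p=c\,(x^{(\alpha_1)})^q\}\cap\mathcal{H}^\alpha_m$ is irreducible and $F^\alpha_{i,k}\cap\mathbf{A}^2_m$ is a dense open subset of it, so $V$ equals this whole hypersurface. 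As $(0,0)$ lies on $\{v^p=cu^q\}$ and $\alpha_1,\alpha_2\leq\nu_\alpha(f_i)\leq m$, we get $\mathcal{H}^{\alpha+(1,1)}_m=\mathcal{H}^\alpha_m\cap\{x^{(\alpha_1)}=y^{(\alpha_2)}=0\}\subseteq V$; by Lemma~\ref{lm : codim+irr} this is strict, since $\mathrm{codim}(V;\mathbf{A}^2_m)=|\alpha|+1<|\alpha|+2=\mathrm{codim}(\mathcal{H}^{\alpha+(1,1)}_m;\mathbf{A}^2_m)$.

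For the ``only if'' direction, suppose $\mathcal{H}^\gamma_m\subseteq V$. First, $V\subseteq\mathcal{H}^\alpha_m$ forces $\gamma\geq\alpha$ componentwise; with the ray description of $\alpha,\beta$ and coprimality this yields $a\leq b$ (the alternative $a=b+1$ would require $p=q=1$ and $\alpha=\gamma$, hence $\nu_\alpha(f)=\nu_\gamma(f)>m\geq\nu_\alpha(f)$, absurd), so in particular $\gamma_1>\alpha_1$ and $\gamma_2>\alpha_2$. Next, $f_i^{[\nu_\alpha(f_i)]}=(y^{(\alpha_2)})^p-c\,(x^{(\alpha_1)})^q \bmod \alpha$ vanishes on $F^\alpha_{i,k}\cap\mathbf{A}^2_m$ (as $\nu_\alpha(f_i)\leq k$), hence on $V$; since $\mathcal{H}^\gamma_m\subseteq\{x^{(\alpha_1)}=0\}$, on $V$ this locus coincides with $\{x^{(\alpha_1)}=y^{(\alpha_2)}=0\}$, so $\mathcal{H}^\gamma_m\subseteq W:=V\cap\{x^{(\alpha_1)}=y^{(\alpha_2)}=0\}$, a \emph{proper} closed subvariety of the irreducible $V$. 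Hence $\mathrm{codim}(\mathcal{H}^\gamma_m)\geq\mathrm{codim}(V)+1=|\alpha|+m-\nu_\alpha(f)+2$, and (using $\mathrm{codim}(\mathcal{H}^\gamma_m)\leq|\gamma|=|\beta|+2$, once $\gamma_1,\gamma_2\leq m+1$ is checked) we obtain $m-\nu_\alpha(f)\leq|\beta|-|\alpha|$. A second constraint comes from the fact that all $f_i^{(\ell)}$ with $\nu_\alpha(f_i)\leq\ell\leq k$ vanish on $V\supseteq\mathcal{H}^\gamma_m$, whereas a generic point of $\mathcal{H}^\gamma_m$ has $\mathrm{ord}_t(f_i\circ\,\cdot\,)=\min(\nu_\gamma(f_i),m+1)$; this forces $\nu_\gamma(f_i)>k$, i.e. $m<\nu_\gamma(f_i)-\nu_\alpha(f_i)+\nu_\alpha(f)$.

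It remains to push these two inequalities, together with $m\geq\nu_\alpha(f)$, to the conclusion $\alpha=\beta$ and $m=\nu_\alpha(f)$. Here one evaluates the Newton valuations with the explicit branch data: for $j\neq i$ (writing $T(f_j)=\mathbf{R}_{\geq 0}(p_j,q_j)$) one has $\nu_\alpha(f_j)=a\,\min(pq_j,qp_j)$ (the minimum over $\mathrm{supp}(f_j)$ is attained at a vertex of the Newton polygon of $f_j$ since $\alpha\notin T(f_j)$), while $\nu_\alpha(f_i)=apq$ and $\nu_\gamma(f_i)=bpq+p$ (a short Newton-polygon computation). Substituting reduces the statement to an elementary arithmetic comparison, and this is the step I expect to be the main obstacle: the codimension bound only controls $m-\nu_\alpha(f)$ up to $|\beta|-|\alpha|$ and does not by itself force $m=\nu_\alpha(f)$. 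What closes the gap is a precise description of $W=V\cap\{x^{(\alpha_1)}=y^{(\alpha_2)}=0\}$: restricting the conditions $f_i^{(\ell)}=0$ for $\nu_\alpha(f_i)<\ell\leq k$ to $\{x^{(\alpha_1)}=y^{(\alpha_2)}=0\}$ successively forces the vanishing of $x^{(\alpha_1+1)},y^{(\alpha_2+1)},\dots$ following exactly the pattern of the staircase $\mathrm{SC}(q/p)$ of Theorem~\ref{walk}, so that whenever $k>\nu_\alpha(f_i)$ (i.e. $m>\nu_\alpha(f)$) the variety $W$ is of the same shape as $V$ but ``one staircase step higher'' and contains no $\mathcal{H}^\gamma_m$ with $\gamma$ of the required form, whereas for $k=\nu_\alpha(f_i)$ one gets $W=\mathcal{H}^{\alpha+(1,1)}_m$, which forces $\gamma=\alpha+(1,1)$ and hence $\beta=\alpha$. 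The hypothesis $\nu_\gamma(f)>m$ is used precisely here, to guarantee that $\mathcal{H}^\gamma_m$ is a genuine level-$m$ object that has not already degenerated further along the staircase. Finally, strictness of the inclusion in the surviving case is the codimension comparison $|\alpha|+1<|\alpha|+2$ already recorded in the ``if'' direction.
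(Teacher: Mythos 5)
Your ``if'' direction and the strictness argument are correct and essentially identical to the paper's (explicit description of $V$ as the binomial hypersurface $\{(y^{(\alpha_2)})^p=c(x^{(\alpha_1)})^q\}$ inside $\mathcal{H}^\alpha_m$, then a codimension count). The genuine gap is exactly where you flag it: ruling out $b>a$, i.e.\ $\beta$ a farther lattice point of the ray than $\alpha$. Your two necessary conditions ($m-\nu_\alpha(f)\le|\beta|-|\alpha|$ and $k<\nu_\gamma(f_i)$) do not suffice, and the staircase analysis of $W=V\cap\{x^{(\alpha_1)}=y^{(\alpha_2)}=0\}$ that you propose to close the argument ends in a non sequitur: when $m=\nu_\alpha(f)$ one indeed has $W=\mathcal{H}^{\alpha+(1,1)}_m$, but $\mathcal{H}^\gamma_m\subseteq\mathcal{H}^{\alpha+(1,1)}_m$ only forces $\gamma\ge\alpha+(1,1)$, i.e.\ $b\ge a$, not $b=a$. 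No refinement of this analysis can force $b=a$, because the inclusion genuinely holds for $b>a$: take $f=y^2-x^3$, $\alpha=(2,3)$, $\beta=(4,6)$, $\gamma=(5,7)$, $m=6=\nu_\alpha(f)$; then $\nu_\gamma(f)=14>m$, and $\overline{F^{(2,3)}_{6}\cap\mathbf{A}^2_6}=\mathcal{H}^{(2,3)}_6\cap\{(y^{(3)})^2=(x^{(2)})^3\}$ contains $\mathcal{H}^{(5,7)}_6$ since $x^{(2)}$ and $y^{(3)}$ both vanish there. So the ``only if'' direction as literally stated fails for $\beta>\alpha$; what saves it is the minimality of $\gamma$ for the product order, which is in force wherever the theorem is actually applied ($\gamma\in B^*_m$ in Theorem~\ref{th : decom}) and which immediately excludes $\gamma\ge\alpha+(1,1)$ with $\gamma\neq\alpha+(1,1)$. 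That extra hypothesis, not a finer description of $W$, is the missing ingredient.

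For comparison, the paper's proof treats $\alpha=\beta$ exactly as you do (strictness of the inclusion plus the codimension count $|\alpha|+2>|\alpha|+1+m-\nu_\alpha(f)$ gives $m=\nu_\alpha(f)$), and treats $a>b$ by the same integrality trick you sketch ($\gamma\ge\alpha$ componentwise forces $p=q=a-b=1$, hence $\gamma=\alpha$, contradicting $\nu_\gamma(f)>m\ge\nu_\alpha(f)$); it leaves the case $a<b$ untreated as well. So your diagnosis of where the difficulty sits is accurate, but the proof is not complete, and the route you propose for completing it cannot work without importing the minimality of $\gamma$.
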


\begin{proof}
Suppose first that $\alpha=\beta$ and that
$\mathcal{H}^{\gamma}_m =\overline{F^\alpha_{i,\nu_\alpha(f_i)+m-\nu_\alpha(f)}\cap \mathbf{A}^2_m}$. Then, 
$$U := F^\alpha_{i,\nu_\alpha(f_i)+m-\nu_\alpha(f)}\cap \mathbf{A}^2_m \subset \mathcal{H}^{\gamma}_m.$$
However, $x^{(\alpha_1)}\neq 0$ in $U$ and $x^{(\alpha_1)}=0$ in $\mathcal{H}^\gamma_m$, which is a contradiction. So the inclusion if it occurs is always strict. 
Now, let us show the equivalence in the theorem. Suppose that $m=\nu_\alpha(f)$ and $\alpha=\beta$, then 
$$\overline{F^\alpha_{i,\nu_\alpha(f_i)}\cap \mathbf{A}^2_m}=\{x^{(0)}=...=x^{(\alpha_1-1)}=y^{(0)}=...=y^{(\alpha_2-1)}=f_i^{[\nu_\alpha(f_i)]}=0\}$$
with $f^{[\nu_\alpha(f_i)]}_i=[{y^{(\alpha_2)}}]^p-c_i\cdot [{x^{(\alpha_1)}}]^q\mod \alpha$ where $T(f_i)=\mathbf{R}_{\geq 0}(p,q)$. So that we have the inclusion $\mathcal{H}^{\gamma}_m \subseteq \overline{F^\alpha_{i,\nu_\alpha(f_i)+m-\nu_\alpha(f)}\cap \mathbf{A}^2_m}$, with $m=\nu_\alpha(f)$ and $\gamma=\alpha+(1,1)$.

To prove the other direction of the equivalence, suppose that we have the inclusion above with $m\geq \nu_\alpha(f)$ together with the condition $\alpha=\beta$. Then, 
$$\mathcal{H}^{\gamma}_m \neq  \overline{F^\alpha_{i,\nu_\alpha(f_i)+m-\nu_\alpha(f)}\cap \mathbf{A}^2_m}.$$
By comparing the codimensions of these irreducible sets: 
$$|\gamma|=|\alpha|+2 > |\alpha|+1+m-\nu_\alpha(f),$$
we arrive at the inequality $1 > m-\nu_\alpha(f)\geq 0$ which shows that $m=\nu_\alpha(f)$. 

Now, assume on the other hand that $\alpha\neq \beta$ with $\mathcal{H}^\gamma_m\subseteq \overline{F^\alpha_{i,\nu_\alpha(f_i)+m-\nu_\alpha(f)}\cap \mathbf{A}^2_m}$. Let us recall that for every reduced plane curve $\mathcal{C}=\{g=0\}\subset \mathbf{A}^2$,
$$m<\nu_\gamma(g)\Leftrightarrow (x^{(0)},y^{(0)},g^{(j)})_{0\leq j\leq m}\subseteq(x^{(i)},y^{(j)})_{0\leq i< \gamma_1\:\ 0\leq j<\gamma_2}.$$
As $\mathcal{H}^\gamma_m\subseteq \mathcal{H}^\alpha_m\cap \{f^{(j)}_i=0\}_{0\leq j\leq a}\subseteq \mathcal{C}^0_{i,a}$, then
$$(x^{(0)},y^{(0)},f^{(j)}_i)_{0\leq j\leq a}\subseteq (x^{(i)},y^{(j)})_{0\leq i< \gamma_1\:\ 0\leq j<\gamma_2}$$
and $\nu_\gamma(f_i)>a=\nu_\alpha(f_i)+m-\nu_\alpha(f)$. If $a+h=\nu_\gamma(f_i)-1$ then
$$(x^{(0)},..., x^{(\alpha_1-1)},y^{(0)},..., y^{(\alpha_2-1)}, f_i^{(j)})_{0\leq j\leq a+h} \subseteq (x^{(i)},y^{(j)})_{0\leq i<\gamma_1\:\ 0\leq j<\gamma_2},$$
$$\mathcal{H}^\gamma_{a+h} \subset \mathcal{H}^\alpha_{a+h}\cap \{f_i^{(j)}=0\}_{\nu_\alpha(f_i)\leq j\leq a+h}$$
is a strict inclusion. Therefore, $|\gamma|\geq 1+a+h-\nu_\alpha(f_i)+1+|\alpha|$. By replacing $\gamma=\beta+(1,1)=b\cdot \alpha+(1,1)$, we find that
$$b|\alpha|+2=|\beta|+2 = |\gamma|\geq 2+|\alpha|+\nu_\gamma(f_i)-1-\nu_\alpha(f_i),$$
$$b|\alpha|\ge |\alpha|+\nu_\gamma(f_i)-1-\nu_\alpha(f_i).$$
As $\nu_\gamma(f_i)>a\geq \nu_\alpha(f_i)$ then $b|\alpha|\geq |\alpha|$ and $b\geq 1$. We have $\mathcal{H}^\beta_{m-1}\subseteq \mathcal{H}^\alpha_{m-1}$ i.e. $\beta\geq \alpha$. Moreover, by the previous lemma, there is no $\lambda<\beta$ such that $\nu_\lambda(f)>m-1$. Thus, we must have $\alpha=\beta$.

Now suppose $m=\nu_\alpha(f)$. If
$$\mathcal{H}^\gamma_m \subseteq \overline{F^\alpha_{i,\nu_\alpha(f_i)}\cap \mathbf{A}^2_m}= \mathcal{H}^\alpha_m\cap \{f_i^{[\nu_\alpha(f_i)]}=0\}$$
then again by comparing codimensions, (since the inclusion is strict), we get $|\gamma| >1+|\alpha|$. Writing $\beta=b\cdot \alpha$ (with $b>0$) in
$$(b-1)|\alpha|=|\beta|-|\alpha|=|\gamma|-(2+|\alpha|)\geq 0,$$
guarantees that  $b\geq 1$ so that $\mathcal{H}^\beta_{m-1}\subseteq \mathcal{H}^\alpha_{m-1}$ i.e. $\beta\geq \alpha$. As there is no $\lambda<\beta$ such that $\nu_\lambda(f)>m-1$, we again must have $\alpha=\beta$.
\end{proof}

Our next theorem deals with the fact that distinct infinite components (of the jet schemes of a Newton Plane Curves) come from distinct branches of this curve. This in turns means that
whenever one such irreducible component is inside another infinite component, then those components are essentially the same.

Before doing so, we need to recall the formula of the codimension of $G^\alpha_k\cap \mathbf{A}^2_m$ (where $G$ is defined by an irreducible Newton non-degenerate $g$ that divides $f$) is given by:
$$codim(G^\alpha_k\cap \mathbf{A}^2_m ; \mathbf{A}^2_m)=1+|\alpha|+m-\nu_\alpha(f)$$
in which we set $k=\nu_\alpha(g)+m-\nu_\alpha(f)$.
With this in mind, we have the following theorem.
\begin{theorem}\label{lm : inclusion}
	Let $f\in \mathcal{O}_{k^2}$ define a Newton plane curve singularity. Let $f_i$ and $f_j$ define branches of $f$ such that 	$$T(f_l)=\mathbf{R}_{\geq 0} \cdot \alpha_l \mbox{ with } \alpha_l\in \mathbf{N}^2$$ 
	and $\nu_{\alpha_l}(f)\leq m$ for  $l\in\{i,j\}$. Set $k_l=\nu_{\alpha_l}(f_l)+m-\nu_{\alpha_l}(f)$. If 
	$$F^{\alpha_i}_{i,k_i}\cap \mathbf{A}^2_m \subseteq \overline{F^{\alpha_j}_{j, k_j}\cap \mathbf{A}^2_m}$$ 
	then $\alpha_i=\alpha_j$ and 
	$$\overline{\mathbf{A}^2_m\cap F^{\alpha_{i}}_{i,k_i}}=\overline{\mathbf{A}^2_m\cap F^{\alpha_{j}}_{j,k_j}}.$$
	In particular, $f_i=f_j$.
\end{theorem}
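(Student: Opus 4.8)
The plan is to argue by comparing codimensions and using the structural description of the infinite components together with the Newton non-degeneracy hypothesis. First I would observe that since $F^{\alpha_i}_{i,k_i}\cap \mathbf{A}^2_m \subseteq \overline{F^{\alpha_j}_{j, k_j}\cap \mathbf{A}^2_m}$, both sides being irreducible by Lemma \ref{lm : codim+irr}, the inclusion of codimensions gives
\[
1+|\alpha_j|+m-\nu_{\alpha_j}(f) \;=\; \mathrm{codim}\!\left(\overline{F^{\alpha_j}_{j,k_j}\cap\mathbf{A}^2_m};\mathbf{A}^2_m\right)\;\leq\;\mathrm{codim}\!\left(F^{\alpha_i}_{i,k_i}\cap\mathbf{A}^2_m;\mathbf{A}^2_m\right)\;=\;1+|\alpha_i|+m-\nu_{\alpha_i}(f),
\]
so $|\alpha_j|-\nu_{\alpha_j}(f)\leq |\alpha_i|-\nu_{\alpha_i}(f)$. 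To get the reverse inequality, and hence equality of codimensions, I would use that $F^{\alpha_i}_{i,k_i}\cap\mathbf{A}^2_m\subseteq \mathcal{H}^{\alpha_i}_m$ while the inclusion forces (on the closures) $\overline{F^{\alpha_j}_{j,k_j}\cap\mathbf{A}^2_m}\subseteq \mathcal{H}^{\alpha_j}_m$ as well; intersecting with the generic condition $\{x^{(\alpha_{i,1})}\neq 0, y^{(\alpha_{i,2})}\neq 0\}$ on the smaller set and tracking which coordinates are forced to vanish should yield $\alpha_j\leq \alpha_i$ component-wise, hence $|\alpha_j|\leq |\alpha_i|$. The subtle point is that $\alpha_i$ and $\alpha_j$ lie on rays $L_{(p_i,q_i)}$ and $L_{(p_j,q_j)}$ which may differ; I would write $\alpha_i=a_i(p_i,q_i)$, $\alpha_j=a_j(p_j,q_j)$ and use $\alpha_j\leq\alpha_i$ together with $\nu_{\alpha_i}(f)=\sum_{l}v_{\alpha_i}(f_l)$ (the order being additive over branches, and $v_{\alpha_i}(f_l)$ computed from the intersection multiplicity / the slope of $L_{(p_l,q_l)}$ against the direction $\alpha_i$) to force $(p_i,q_i)=(p_j,q_j)$ and $a_i=a_j$, i.e. $\alpha_i=\alpha_j=:\alpha$.

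Once $\alpha_i=\alpha_j=\alpha$, the two sets $\overline{F^{\alpha}_{i,k}\cap\mathbf{A}^2_m}$ and $\overline{F^{\alpha}_{j,k}\cap\mathbf{A}^2_m}$ (with $k=\nu_\alpha(f_i)+m-\nu_\alpha(f)=\nu_\alpha(f_j)+m-\nu_\alpha(f)$, the two being equal since $\nu_\alpha(f_i)=\nu_\alpha(f_j)$ now that the rays coincide) have equal codimension $1+|\alpha|+m-\nu_\alpha(f)$ by the boxed formula, so the inclusion between irreducibles of the same dimension is an equality: $\overline{\mathbf{A}^2_m\cap F^{\alpha}_{i,k}}=\overline{\mathbf{A}^2_m\cap F^{\alpha}_{j,k}}$. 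It remains to deduce $f_i=f_j$. For this I would restrict to the level $m=\nu_\alpha(f)$ (the statement at higher $m$ is obtained by pullback along the truncation $\pi:\mathbf{A}^2_m\to\mathbf{A}^2_{\nu_\alpha(f)}$, exactly as in the proof of Lemma \ref{lm : codim+irr}), where
\[
\overline{F^{\alpha}_{i,\nu_\alpha(f_i)}\cap\mathbf{A}^2_m}=\{x^{(0)}=\cdots=x^{(\alpha_1-1)}=y^{(0)}=\cdots=y^{(\alpha_2-1)}=in_\alpha(f_i)(x^{(\alpha_1)},y^{(\alpha_2)})=0\},
\]
and similarly with $f_j$. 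Equality of these hypersurfaces inside the linear subspace $\mathcal{H}^\alpha_m$ forces the reduced equations $in_\alpha(f_i)(x^{(\alpha_1)},y^{(\alpha_2)})$ and $in_\alpha(f_j)(x^{(\alpha_1)},y^{(\alpha_2)})$ to differ by a unit; since each branch contributes a single binomial factor $y^{p}-c_i x^{q}$ to $in_\alpha(f)$ and the map $i\mapsto c_i$ is injective by Theorem \ref{thm: newton shape}(2), this gives $c_i=c_j$ and hence $f_i=f_j$.

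The main obstacle I anticipate is the middle step: showing that the inclusion forces the rays $L_{(p_i,q_i)}$ and $L_{(p_j,q_j)}$ to coincide and $\alpha_i=\alpha_j$, rather than merely $|\alpha_j|-\nu_{\alpha_j}(f)\leq|\alpha_i|-\nu_{\alpha_i}(f)$. One has to rule out a "thin" component on one ray sitting inside a "fat" one on another ray, and the clean way is probably to extract from $F^{\alpha_i}_{i,k_i}\cap\mathbf{A}^2_m\subseteq\mathcal{H}^{\alpha_j}_m$ the componentwise inequality $\alpha_i\geq\alpha_j$ (because on the left-hand set $x^{(r)}=0$ for $r<\alpha_{i,1}$ but $x^{(\alpha_{i,1})}\neq 0$, so $\alpha_{j,1}\leq\alpha_{i,1}$, likewise in $y$), combine it with the codimension inequality $|\alpha_i|-\nu_{\alpha_i}(f)\geq|\alpha_j|-\nu_{\alpha_j}(f)$, and then use Observation \ref{obs:branch}-type estimates $\nu_{\alpha_l}(f_l)=(\text{product of the two coordinates})$ to pin everything down; I expect this to reduce to a short arithmetic argument about lattice points on two rays in the first quadrant, but getting the inequalities to point the right way is where care is needed.
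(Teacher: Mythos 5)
Your setup matches the paper's: you extract the codimension inequality $|\alpha_i|-|\alpha_j|\geq \nu_{\alpha_i}(f)-\nu_{\alpha_j}(f)$ from the inclusion of irreducibles, and the componentwise inequality $\alpha_i\geq\alpha_j$ from $F^{\alpha_i}_{i,k_i}\cap\mathbf{A}^2_m\subseteq\mathcal{H}^{\alpha_j}_m$ together with $x^{(\alpha_{i,1})}\neq 0$, $y^{(\alpha_{i,2})}\neq 0$ on the smaller set. The final step ($\alpha_i=\alpha_j$ forces equal codimension, hence equality of the irreducible closures, hence $c_i=c_j$ and $f_i=f_j$ by injectivity of the coefficients) is also fine, and in fact more explicit than what the paper writes. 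But the decisive middle step is missing, and you say so yourself: you ``expect'' a short arithmetic argument to force $\alpha_i=\alpha_j$ from the two inequalities. That expectation is not justified as stated, because the two inequalities alone do \emph{not} imply $\alpha_i=\alpha_j$: for $f=y-x^2$ one has $\alpha_j=(1,2)$, $\alpha_i=(2,4)$ on the same ray with $\alpha_i\geq\alpha_j$ and $|\alpha_i|-|\alpha_j|=3\geq 2=\nu_{\alpha_i}(f)-\nu_{\alpha_j}(f)$. So any correct argument must use that $f$ is singular at the origin and that $\alpha_i\in T(f)$, and this is exactly where the paper does its real work.

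Concretely, the paper sets $\delta=\alpha_i-\alpha_j\in\mathbf{N}_0^2$, picks $\eta\in\mathrm{supp}(f)$ with $\eta\cdot\alpha_i=\nu_{\alpha_i}(f)$, and uses $\eta\cdot\alpha_j\geq\nu_{\alpha_j}(f)$ to convert the codimension inequality into $|\delta|\geq\eta\cdot\delta\geq\nu_\delta(f)$. It then runs a case analysis on a minimizing $(a,b)\in\mathrm{supp}(f)$ with $a\delta_1+b\delta_2\leq\delta_1+\delta_2$: either this forces $(0,1)\in\mathrm{supp}(f)$ (so $f$ is smooth at $0$, contradiction), or it forces $\eta=(1,1)$ and $\alpha_i\perp(\eta-(1,1))$ with $\eta\geq(1,1)$ (using that $in_{\alpha_i}(f)$ is not a monomial since $\alpha_i\in T(f)$), again a contradiction unless $\delta=0$. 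Your alternative sketch via $\nu_{\alpha_i}(f)=\sum_l v_{\alpha_i}(f_l)$ and branch-by-branch intersection multiplicities would require controlling every branch of $f$ simultaneously, which is messier than the single support point $\eta$; and your remark that $|\alpha_j|\leq|\alpha_i|$ gives ``the reverse inequality, and hence equality of codimensions'' is not correct as written, since the reverse of the codimension inequality is $|\alpha_j|-\nu_{\alpha_j}(f)\geq|\alpha_i|-\nu_{\alpha_i}(f)$, not $|\alpha_j|\leq|\alpha_i|$. Until the $\delta=0$ step is actually carried out, the proof is incomplete.
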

\begin{proof}
	The inclusion: $F^{\alpha_i}_{i,k_i}\cap \mathbf{A}^2_m \subseteq \overline{F^{\alpha_j}_{j, k_j}\cap \mathbf{A}^2_m} $ tells us (via comparing codimensions) that
	$$1+m-\nu_{\alpha_i}(f)+|\alpha_i|\geq 1+m-\nu_{\alpha_j}(f)+|\alpha_j|,$$ which in turn leads to the following inequality
	\begin{equation}
		|\alpha_i-\alpha_j|= |\alpha_i| - |\alpha_j|\geq \nu_{\alpha_i}(f)-\nu_{\alpha_j}(f).
		\label{eq}
	\end{equation}

	Since 
	$U:=\mathbf{A}^2_m\cap F^{\alpha_{i}}_{i,k_i}\subseteq \mathcal{H}^{\alpha_{j}}_m$ and $x^{(\alpha_{i,1})}\neq 0$ in $U$, then $\alpha_{i,1}\geq \alpha_{j,1}$ and similarly we conclude that $\alpha_{i,2}\geq \alpha_{j,2}$ as $y^{(\alpha_{i,2})}\neq 0$ in $U$. Now
	choosing $\eta=(a,b)\in supp(f)$ a vertex of the Newton polygon of $f$ such that $\eta\cdot \alpha_i=\nu_{\alpha_i}(f)$ and setting $\delta=\alpha_i-\alpha_j\in \mathbf{N}^2_0$, then $\eta\cdot \alpha_j \geq \nu_{\alpha_j}(f)$.
	Therefore,
	$$|\delta|=|\alpha_i-\alpha_j|\geq \nu_{\alpha_i}(f) - \nu_{\alpha_j}(f) \geq  \eta\cdot \alpha_i-\eta\cdot \alpha_j=\eta\cdot \delta ,$$
	and hence $a\delta_1+b\delta_2 \leq \delta_1+\delta_2$. But if $a,b\geq 1$ then $\delta\cdot (a-1,b-1)=0$ where both $\delta$ and $(a-1,b-1)\in \mathbf{R}_{\geq 0}^2$ are in the first quadrant, which is obviously a contradiction. So $a=0$ or $b=0$ but not both, since $f(0)=0$. As $a$ and $b$ play symmetric roles, then we may suppose $a=0$ with $b\neq 0$ so that $b\delta_2\leq \delta_1+\delta_2$. As $f$ defines a singularity then $b\geq 2$, since otherwise $f=y+a(x)$ can be written in Weierstrass form defining a smooth curve. Therefore, $\delta_1\geq \delta_2(b-1)\geq \delta_2$. Now, let's set $a = \nu_{\alpha_i}(f_i)+m-\nu_{\alpha_i}(f)$ and for simplicity, $\alpha = \alpha_i$ and $\beta=\alpha_j$. We have 
    $$\varnothing\neq F^{\alpha_{i}}_{i,a}\cap \mathbf{A}^2_m \subseteq \overline{F^{\alpha_j}_{j,k_j}\cap \mathbf{A}^2_m} \subseteq \mathcal{H}^{\alpha_j}_{m}\cap \{f^{[\nu_{\alpha_j}(f_j)]}_j=0\},$$
    $$(x^{(0)},..., x^{(\beta_1-1)},y^{(0)},..., y^{(\beta_2-1)},f_j^{[\nu_\beta(f_j)]})\subseteq (I: x^{(\alpha_1)}).$$
where $I = (x^{(0)},..., x^{(\alpha_1-1)},y^{(0)},..., y^{(\alpha_2-1)},f_i^{[\nu_\alpha(f_i)]},..., f^{(a)}_i)\subseteq \mathcal{O}_{\mathbf{A}^2_m}$ and 
$$(I:x^{(\alpha_1)})=\{g\in \mathcal{O}_{\mathbf{A}^2_m} : x^{(\alpha_1)}g\in I\}$$ is the colon ideal. We can write
$$x^{(\alpha_1)}f^{[\nu_\beta(f_j)]}_j = \sum_{j=0}^{\alpha_1-1}x_jX_j+\sum_{j=0}^{\alpha_2-1}y_jY_j+\sum_{j=\nu_\alpha(f_i)}^af^{(j)}_i\Phi_j.$$
Let's annihilate the variables, $$x^{(0)},..., x^{(\beta_1-1)},y^{(0)},..., y^{(\beta_2-1)},x^{(\alpha_1+1)},..., x^{(m)}, y^{(\alpha_2+1)},..., y^{(m)},$$ 
we obtain a power series ring with variables $x^{(\beta_1)},..., x^{(\alpha_1)},y^{(\beta_2)},..., y^{(\alpha_2)}$ and the expressions
$$f_i^{[\nu_\alpha(f_i)]}\equiv {[y^{(\alpha_2)}]}^{p_i}-c_i{[x^{(\alpha_1)}]}^{q_i},$$
$$f_j^{[\nu_\beta(f_j)]}\equiv {[y^{(\beta_2)}]}^{p_j}-c_j{[x^{(\beta_1)}]}^{q_j}$$
where $T(f_l) = \mathbf{R}_{\geq 0}(p_l,q_l)$ and $gcd(p_l,q_l)=1$ for $l\in \{i,j\}$. Now, if $\delta_1>0$ i.e. $\alpha_1>\beta_1$, we annihilate $x^{(\beta_1)},..., x^{(\alpha_1-1)}$ to obtain in a power series ring
$$x^{(\alpha_1)}[y^{(\beta_2)}]^{p_j}=\sum_{j=\beta_2}^{\alpha_2-1} y_jY_j+([{y^{(\alpha_2)}]}^{p_i}-c_i{[x^{(\alpha_1)}}^{q_i}]]\Phi.$$
Next, we distinguish the following 2 cases:
\begin{itemize}
    \item if $\alpha_2=\beta_2$ then inside a power series we have an expression of the form
    $$x^{(\alpha_1)}[{y^{(\beta_2)}}]^{p_j}= ([{y^{(\alpha_2)}]}^{p_i}-c_i{[x^{(\alpha_1)}}^{q_i}])\Phi,$$ 
    which is clearly not possible.
    \item if $\alpha_2>\beta_2$, then annihilate the variables $y^{(\beta_2)},..., y^{(\alpha_2-1)}$ so that in a power series ring, we have an expression of the form
    $$x^{(\alpha_1)}[{y^{(\beta_2)}}]^{p_j}= ([{y^{(\alpha_2)}]}^{p_i}-c_i{[x^{(\alpha_1)}}^{q_i}])\Psi.$$ 
\end{itemize}
    Both cases lead to a contradiction, so $0\leq \delta_2\leq \delta_1=0$. As $\delta=(0,0)$, then  $\alpha_i = \alpha_j$. Now inspecting codimensions in the inclusion:
	$$\overline{\mathbf{A}^2_m\cap F^{\alpha_i}_{i,k_i}}\subseteq \overline{\mathbf{A}^2_m\cap F^{\alpha_j}_{j,k_j}}$$
	shows that both have the same codimension (as $\alpha_i=\alpha_j$). Since each side of the inclusion is irreducible  and both have  the same codimension, then they must be equal. This completes the proof.
\end{proof} 

Our next results deal with showing that $\mathcal{H}^\gamma_m$ and $\overline{\mathbf{A}^2_m\cap F^\alpha_{i,k}}$ are not comparable by inclusion.

\begin{lemma} \label{lm : UH}
	Let $f\in \mathcal{O}_{k^2}$ define a Newton plane curve. Let $f_i$ define a branche of $f$ such that 	$$T(f_i)=\mathbf{R}_{\geq 0}. \alpha_i : \alpha_i\in \mathbf{N}^2$$ 
	and $\nu_{\alpha_i}(f)\leq m$. Set $k_i=\nu_{\alpha_i}(f_i)+m-\nu_{\alpha_i}(f)$ and let $\gamma\in \mathbf{N}^2$ such that $\nu_\gamma(f)>m$ with $\gamma$ minimal for the product order. Then, we cannot have $$\mathcal{H}^\gamma_m\supseteq \overline{\mathbf{A}^2_m\cap F^{\alpha_{i}}_{i,k_i}}.$$
\end{lemma}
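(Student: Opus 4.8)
The plan is to argue by contradiction. Suppose that $\mathcal{H}^\gamma_m \supseteq \overline{\mathbf{A}^2_m\cap F^{\alpha_i}_{i,k_i}}$, and set $U:=\mathbf{A}^2_m\cap F^{\alpha_i}_{i,k_i}$. The first step is to record that $U$ is non-empty: since $f_i\mid f$ one has $\nu_{\alpha_i}(f_i)\leq \nu_{\alpha_i}(f)\leq m$, hence $\nu_{\alpha_i}(f_i)\leq k_i\leq m$, and Lemma \ref{lm : codim+irr} applied to the branch $f_i$ tells us that $U$ is irreducible, in particular non-empty (its codimension is $1+|\alpha_i|+m-\nu_{\alpha_i}(f)$). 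This non-emptiness is the only point that needs a word of care, since it is what makes the assumed inclusion actually constrain $\alpha_i$.

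Next I would compare coordinates. By the very definition of $F^{\alpha_i}_{i,k_i}$, every point of $U$ satisfies $x^{(\alpha_{i,1})}\neq 0$ and $y^{(\alpha_{i,2})}\neq 0$, whereas every point of $\mathcal{H}^\gamma_m$ satisfies $x^{(j)}=0$ for $0\leq j<\gamma_1$ and $y^{(j)}=0$ for $0\leq j<\gamma_2$. Since the non-empty set $U$ is contained in $\mathcal{H}^\gamma_m$, we cannot have $\alpha_{i,1}<\gamma_1$ (that would force $x^{(\alpha_{i,1})}=0$ on $U$), so $\alpha_{i,1}\geq\gamma_1$; symmetrically $\alpha_{i,2}\geq\gamma_2$. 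Thus $\alpha_i\geq\gamma$ for the product order.

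Finally I would invoke monotonicity of the Newton order function: if $\alpha\geq\beta$ componentwise then $\nu_\alpha(f)\geq\nu_\beta(f)$, because $\nu_\alpha(f)=\min\{(a,b)\cdot\alpha : c_{a,b}\neq 0\}$ and each $(a,b)\cdot\alpha\geq (a,b)\cdot\beta$, the exponents $a,b$ being non-negative. Applying this to $\alpha_i\geq\gamma$ gives $\nu_{\alpha_i}(f)\geq\nu_\gamma(f)>m$, which contradicts the hypothesis $\nu_{\alpha_i}(f)\leq m$. Hence the inclusion $\mathcal{H}^\gamma_m\supseteq\overline{\mathbf{A}^2_m\cap F^{\alpha_i}_{i,k_i}}$ is impossible. (Note that the minimality of $\gamma$ for the product order is not used in this argument; it is there only because such $\gamma$ are the ones indexing candidate components.) The ``hard part'' is essentially trivial here — the whole content is the coordinate comparison plus monotonicity of $\nu$.
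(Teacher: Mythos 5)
Your proof is correct and follows exactly the paper's argument: deduce $\alpha_i\geq\gamma$ componentwise from the assumed inclusion, then use monotonicity of $\nu$ to get the contradiction $m<\nu_\gamma(f)\leq\nu_{\alpha_i}(f)\leq m$. You simply spell out the non-emptiness of $U$ and the coordinate comparison, which the paper leaves implicit.
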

\begin{proof}
If $\mathbf{A}^2_m\cap F^{\alpha_{i}}_{i,k_i}\subseteq \mathcal{H}^\gamma_m$ then $\alpha_i\geq \gamma$ for the product order. Therefore,
	$$m<\nu_\gamma(f)\leq \nu_{\alpha_i}(f)\leq m$$
	 which is a contradiction. This concludes the proof.
\end{proof}

\begin{lemma} \label{lm : HU-ok}
		Let $f\in \mathcal{O}_{k^2}$ define a Newton plane curve. Let $f_i$ define a branch of $f$ such that 	$$T(f_i)=\mathbf{R}_{\geq 0} .\alpha_i : \alpha_i\in \mathbf{N}^2$$ 
	and set $\nu_{\alpha_i}(f)= m$, $k_i=\nu_{\alpha_i}(f_i)$ and  $\gamma=\alpha_i+(1,1)\in \mathbf{N}^2$. Then, we have $$\mathcal{H}^\gamma_m\subseteq\overline{\mathbf{A}^2_m\cap F^{\alpha_{i}}_{i,k_i}}.$$
	Moreover, if $m>\nu_{\alpha_i}(f)$ then $\mathcal{H}^\gamma_m \not\subset \overline{\mathbf{A}^2_m\cap F^{\alpha_{i}}_{i,k_i+h}}$ where $h=m-\nu_{\alpha_i}(f)$.
\end{lemma}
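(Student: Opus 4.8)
The plan is to treat the two assertions separately. For the inclusion $\mathcal{H}^\gamma_m \subseteq \overline{\mathbf{A}^2_m\cap F^{\alpha_i}_{i,k_i}}$ when $m = \nu_{\alpha_i}(f)$, I would follow the same reasoning already used in the first half of Theorem \ref{th : M1+}. Since $f$ is a Newton plane curve and $f_i$ is a branch with $T(f_i) = \mathbf{R}_{\geq 0}\alpha_i$, writing $\alpha_i = a(p,q)$ with $\gcd(p,q)=1$ we have $f_i^{[\nu_{\alpha_i}(f_i)]} = [y^{(\alpha_{i,2})}]^p - c_i[x^{(\alpha_{i,1})}]^q \bmod \alpha_i$, so that
$$\overline{\mathbf{A}^2_m\cap F^{\alpha_i}_{i,k_i}} = \{x^{(0)}=\cdots=x^{(\alpha_{i,1}-1)}=y^{(0)}=\cdots=y^{(\alpha_{i,2}-1)}=f_i^{[\nu_{\alpha_i}(f_i)]}=0\}.$$
Since $\gamma = \alpha_i + (1,1)$, the variety $\mathcal{H}^\gamma_m$ is cut out by $x^{(0)}=\cdots=x^{(\alpha_{i,1})}=0$ and $y^{(0)}=\cdots=y^{(\alpha_{i,2})}=0$; in particular on $\mathcal{H}^\gamma_m$ both $x^{(\alpha_{i,1})}$ and $y^{(\alpha_{i,2})}$ vanish, hence $f_i^{[\nu_{\alpha_i}(f_i)]}\equiv 0$ there, and all the lower-order equations defining $\overline{\mathbf{A}^2_m\cap F^{\alpha_i}_{i,k_i}}$ are among the defining equations of $\mathcal{H}^\gamma_m$. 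This gives the containment directly.

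For the second assertion, suppose $m > \nu_{\alpha_i}(f)$ and set $h = m - \nu_{\alpha_i}(f) > 0$, so $k_i + h = \nu_{\alpha_i}(f_i) + m - \nu_{\alpha_i}(f)$, which is exactly the exponent appearing in the infinite-component normalization. The argument is to compare codimensions inside $\mathbf{A}^2_m$. By Lemma \ref{lm : codim+irr} (or the codimension formula recalled before Theorem \ref{lm : inclusion}),
$$\mathrm{codim}\big(\overline{\mathbf{A}^2_m\cap F^{\alpha_i}_{i,k_i+h}};\ \mathbf{A}^2_m\big) = 1 + |\alpha_i| + h,$$
while $\mathrm{codim}(\mathcal{H}^\gamma_m;\mathbf{A}^2_m) = |\gamma| = |\alpha_i| + 2$. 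If the inclusion $\mathcal{H}^\gamma_m \subseteq \overline{\mathbf{A}^2_m\cap F^{\alpha_i}_{i,k_i+h}}$ held, then since both sets are irreducible (the latter by Lemma \ref{lm : codim+irr}), we would need $|\gamma| \geq 1 + |\alpha_i| + h$, i.e. $|\alpha_i| + 2 \geq 1 + |\alpha_i| + h$, i.e. $1 \geq h$, forcing $h = 1$; but even then the two codimensions coincide, so the inclusion would be an equality, contradicting the fact that $x^{(\alpha_{i,1})} \neq 0$ on $\mathbf{A}^2_m\cap F^{\alpha_i}_{i,k_i+h}$ whereas $x^{(\alpha_{i,1})} = 0$ on $\mathcal{H}^\gamma_m$ (recall $\gamma = \alpha_i + (1,1)$, so $\alpha_{i,1} < \gamma_1$). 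For $h \geq 2$ the codimension inequality already fails, so no inclusion is possible. Hence $\mathcal{H}^\gamma_m \not\subset \overline{\mathbf{A}^2_m\cap F^{\alpha_i}_{i,k_i+h}}$ in all cases with $h > 0$.

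The only mildly delicate point — and the one I would state carefully rather than rush — is bookkeeping the normalization $k_i = \nu_{\alpha_i}(f_i)$ versus $k_i + h = \nu_{\alpha_i}(f_i)+m-\nu_{\alpha_i}(f)$ and making sure the hypotheses $\nu_{\alpha_i}(f_i)\leq k_i+h\leq m$ of Lemma \ref{lm : codim+irr} are met, which holds since $\nu_{\alpha_i}(f_i)\leq\nu_{\alpha_i}(f)$ gives $\nu_{\alpha_i}(f_i) \leq k_i+h$ and $h = m - \nu_{\alpha_i}(f) \geq 0$ combined with $k_i = \nu_{\alpha_i}(f_i) \leq \nu_{\alpha_i}(f)$ gives $k_i + h \leq m$. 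Everything else is a direct codimension count together with the observation that the $x$-coordinate is a unit on the infinite component but vanishes on $\mathcal{H}^\gamma_m$.
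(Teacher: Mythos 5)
Your proposal is correct and follows essentially the same route as the paper: for $m=\nu_{\alpha_i}(f)$ you identify $\overline{\mathbf{A}^2_m\cap F^{\alpha_i}_{i,k_i}}$ with the irreducible hypersurface $\mathcal{H}^{\alpha_i}_m\cap\{f_i^{(k_i)}=0\}$ and check that its defining equations vanish on $\mathcal{H}^\gamma_m$, and for $h>0$ you rule out the inclusion by the same codimension comparison $|\alpha_i|+2$ versus $|\alpha_i|+1+h$. Your explicit handling of the borderline case $h=1$ (equal codimensions forcing equality, which fails because $x^{(\alpha_{i,1})}$ is a unit on the open set but vanishes on $\mathcal{H}^\gamma_m$) is the same strictness argument the paper invokes implicitly by writing the inclusion as strict.
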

\begin{proof}
	If $m=\nu_{\alpha_i}(f)$, then we can write
	$$\overline{\mathbf{A}^2_m\cap F^{\alpha_{i}}_{i,k_i}}=\mathcal{H}^{\alpha_i}_m\cap \{f^{(k_i)}_i=0\}$$
as both sides are irreducible with the same codimension $|\alpha_i|+1$. For the second part, suppose that $m>\nu_{\alpha_i}(f)$ and $\mathcal{H}^\gamma_m \subsetneq \overline{\mathbf{A}^2_m\cap F^{\alpha_{i}}_{i,k_i+h}}$. Then, by comparing codimensions:
$$|\alpha_i|+2 > |\alpha_i|+1+h,$$
we arrive at the inequality $1> h=m-\nu_{\alpha_i}(f)>0$ which is a contradiction.  
\end{proof}

\begin{theorem} \label{lm : HU}
	Let $f\in \mathcal{O}_{k^2}$ define a Newton plane curve. Let $f_i$ define a branch of $f$ such that 	$$T(f_i)=\mathbf{R}_{\geq 0} .\alpha : \alpha\in \mathbf{N}^2$$ 
	and $\nu_{\alpha}(f)\leq m$. Set $k=\nu_{\alpha}(f_i)+m-\nu_{\alpha}(f)$. Let $\gamma\in \mathbf{N}^2$ be minimal for the product order, such that  $\nu_\gamma(f)>m$. Moreover, assume that, $\gamma\neq \alpha+(1,1)$. Then, it holds that $$\mathcal{H}^\gamma_m\not\subset\overline{\mathbf{A}^2_m\cap F^{\alpha}_{i,k}}.$$
	
\end{theorem}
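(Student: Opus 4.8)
The plan is to argue by contradiction, assuming $\mathcal{H}^\gamma_m\subseteq\overline{\mathbf{A}^2_m\cap F^{\alpha}_{i,k}}$ and deriving a contradiction with the hypotheses on $\gamma$. The starting point is the two standard consequences of such an inclusion that were already exploited in Theorems \ref{th : M1+} and \ref{lm : inclusion}: first, since $F^{\alpha}_{i,k}\cap\mathbf{A}^2_m\subseteq\mathcal{H}^\alpha_m$, the inclusion forces $\mathcal{H}^\gamma_m\subseteq\mathcal{H}^\alpha_m$, hence $\gamma\geq\alpha$ in the product order; second, comparing codimensions via Lemma \ref{lm : codim+irr} gives $|\gamma|\geq 1+|\alpha|+m-\nu_\alpha(f)$.

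Next I would split according to whether $m=\nu_\alpha(f)$ or $m>\nu_\alpha(f)$. If $m>\nu_\alpha(f)$, Lemma \ref{lm : HU-ok} (its second part, with $h=m-\nu_\alpha(f)>0$) already shows that even $\mathcal{H}^{\alpha+(1,1)}_m$ fails to be contained in $\overline{\mathbf{A}^2_m\cap F^{\alpha}_{i,k}}$; for a general minimal $\gamma$ one can run the same codimension count, $|\gamma|\geq 1+|\alpha|+m-\nu_\alpha(f)\geq 2+|\alpha|$, which together with $\gamma\geq\alpha$ componentwise would force $\gamma$ to be even larger than $\alpha+(1,1)$, and then one shows such a $\gamma$ cannot be minimal with $\nu_\gamma(f)>m$ — because $\nu_{\alpha+(1,1)}(f)\geq\nu_\alpha(f)+(\text{something})$ and one compares against the minimality. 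The cleaner route: if $\gamma\geq\alpha$ componentwise and $\gamma\neq\alpha+(1,1)$ while $\nu_\gamma(f)>m$, minimality of $\gamma$ forces either $\gamma_1<\alpha_1+1$ or $\gamma_2<\alpha_2+1$, i.e. $\gamma_1=\alpha_1$ or $\gamma_2=\alpha_2$; but $\gamma\leq(\alpha_1,\alpha_2+?)$ with one coordinate equal to $\alpha$'s gives $|\gamma|\leq|\alpha|+1$ unless the free coordinate is large, and if it is large one contradicts minimality because shrinking it keeps $\nu_\gamma(f)>m$ (moving along a ray stays in the region $\nu>m$ eventually but minimality is about the product order). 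This case analysis on the position of $\gamma$ relative to $\alpha$ and $\alpha+(1,1)$ is where the bookkeeping lives.

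The residual case is $m=\nu_\alpha(f)$: here $\overline{\mathbf{A}^2_m\cap F^{\alpha}_{i,k}}=\mathcal{H}^\alpha_m\cap\{f_i^{[\nu_\alpha(f_i)]}=0\}$ with $f_i^{[\nu_\alpha(f_i)]}=(y^{(\alpha_2)})^p-c_i(x^{(\alpha_1)})^q \bmod \alpha$, a set of codimension $|\alpha|+1$. The inclusion $\mathcal{H}^\gamma_m\subseteq\mathcal{H}^\alpha_m\cap\{f_i^{[\nu_\alpha(f_i)]}=0\}$ forces $\gamma\geq\alpha$ and $|\gamma|\geq|\alpha|+1$. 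If $|\gamma|=|\alpha|+1$, then $\gamma=\alpha+(1,0)$ or $\gamma=\alpha+(0,1)$; but then $\mathcal{H}^\gamma_m$ does not kill $f_i^{[\nu_\alpha(f_i)]}$ — e.g. if $\gamma=\alpha+(1,0)$ the variable $y^{(\alpha_2)}$ is still free on $\mathcal{H}^\gamma_m$ and $(y^{(\alpha_2)})^p$ survives, contradiction (and $\gamma=\alpha+(1,0)$ or $\alpha+(0,1)$ also fails to be minimal with $\nu_\gamma(f)>m$ precisely because of Lemma \ref{lm : HU-ok}, as the minimal such $\gamma$ is $\alpha+(1,1)$ along the ray; the hypothesis excludes $\gamma=\alpha+(1,1)$). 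So $|\gamma|\geq|\alpha|+2$, forcing $\gamma\geq\alpha+(1,1)$ componentwise; combined with minimality of $\gamma$ among points with $\nu_\gamma(f)>m$ and the fact that $\nu_{\alpha+(1,1)}(f)>m$ already holds (since $\alpha+(1,1)$ strictly dominates $\alpha$ on the Newton side, as in Observation \ref{obs:branch} and the computations in Lemma \ref{lm : HU-ok}), we get $\gamma=\alpha+(1,1)$, contradicting the hypothesis $\gamma\neq\alpha+(1,1)$.

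The main obstacle I anticipate is the interplay between \emph{minimality of $\gamma$ in the product order} and the \emph{strict monotonicity of $\nu_{(-)}(f)$} along the relevant directions: one must be careful that minimality of $\gamma$ with $\nu_\gamma(f)>m$ does not automatically mean $\gamma$ lies on the ray $L_{(p,q)}$ shifted, and conversely that the componentwise domination $\gamma\geq\alpha+(1,1)$ together with $\nu_{\alpha+(1,1)}(f)>m$ does pin down $\gamma=\alpha+(1,1)$. Verifying $\nu_{\alpha+(1,1)}(f)>m=\nu_\alpha(f)$ cleanly — which is essentially the content that the Newton diagram strictly increases when you push a lattice point on the tropical ray off it by $(1,1)$ — is the crux, and it is exactly the kind of estimate already carried out in Observation \ref{obs:branch} and in the proof of Lemma \ref{lm : HU-ok}, so I would reuse that computation verbatim rather than redo it.
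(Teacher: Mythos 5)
Your reduction of the inclusion to its two standard consequences ($\gamma\geq\alpha$ componentwise and $|\gamma|\geq 1+|\alpha|+m-\nu_\alpha(f)$) matches the paper's starting point, and your treatment of the base case $m=\nu_\alpha(f)$ is essentially sound. The genuine gap is in the case $m>\nu_\alpha(f)$, where both of your proposed routes rest on false intermediate claims. You assert that minimality of $\gamma$ forces $\gamma_1=\alpha_1$ or $\gamma_2=\alpha_2$, or alternatively that $\gamma\geq\alpha+(1,1)$ together with $\nu_{\alpha+(1,1)}(f)>m$ pins down $\gamma=\alpha+(1,1)$. But $\nu_{\alpha+(1,1)}(f)>m$ holds only when $m$ is close to $\nu_\alpha(f)$; for larger $m$ the minimal elements drift away from $\alpha+(1,1)$. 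Concretely, take $f=y^2-x^3$, $\alpha=(2,3)$, $m=8$: then $\nu_{(3,4)}(f)=8\not>8$, and the unique minimal $\gamma$ with $\nu_\gamma(f)>8$ is $\gamma=(3,5)$, which satisfies $\gamma\geq\alpha$, $\gamma\neq\alpha+(1,1)$, and has neither coordinate equal to the corresponding coordinate of $\alpha$. Your argument says nothing about this $\gamma$; moreover here $|\gamma|=8=1+|\alpha|+m-\nu_\alpha(f)$, so the codimension inequality is an equality and cannot by itself produce a contradiction.

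What is missing is the converse bound $|\gamma|\leq 1+|\alpha|+m-\nu_\alpha(f)$ for the minimal $\gamma$ dominating $\alpha$. The paper obtains it by induction on $m$: at $m=\nu_\alpha(f)$ the minimal elements above $\alpha$ (other than $\alpha+(1,1)$) are $\alpha+(1,0)$ or $\alpha+(0,1)$, and as $m$ increases by one a minimal element either remains minimal or is replaced by $\gamma+(1,0)$ or $\gamma+(0,1)$ (or lands on $T(f)$, a case excluded via Theorem \ref{lm : inclusion}), so $|\gamma|$ grows by at most one per level, exactly matching the growth of $\operatorname{codim}\bigl(\overline{\mathbf{A}^2_m\cap F^{\alpha}_{i,k}}\bigr)$. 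Combining the two inequalities forces equality of codimensions, and then the inclusion of one irreducible closed set in another of the same codimension would force them to be equal, which is absurd since $x^{(\alpha_1)}$ or $y^{(\alpha_2)}$ vanishes identically on $\mathcal{H}^\gamma_m$ but not on the other set. Without this inductive upper bound your case analysis cannot close, so the proof as proposed does not go through for $m>\nu_\alpha(f)$.
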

\begin{proof} 
Suppose by contradiction that  the inclusion holds. If the inclusion is an equality then $\varnothing \neq F^\alpha_{i,k}\cap \mathbf{A}^2_m\subset\mathcal{H}^\gamma_{m}$. We then have $x^{(\alpha_1)}\neq 0$ and $y^{(\alpha_2)}\neq 0$ in $\mathcal{H}^\gamma_m$ and $\gamma_1\leq\alpha_1$ and $\gamma_2\leq \alpha_2$. Since $\mathcal{H}^\gamma_m\subseteq \overline{\mathbf{A}^2_m\cap F^\alpha_{i,k}}\subseteq \mathcal{H}^\alpha_m$, then $\gamma\geq \alpha$. But $m\geq \nu_\alpha(f)=\nu_\gamma(f)>m$. This is a contradiction. So the inclusion which we assumed has to be strict. Next, we show that $|\gamma|\leq 1+|\alpha|+m-\nu_\alpha(f)$ by induction on $m\geq \nu_\alpha(f)$:
\begin{itemize}
\item At the level $m=\nu_\alpha(f)$ (since $\gamma\geq \alpha$ is minimal), $\alpha$ is deleted and replaced by $\gamma = \alpha+(1,1)$, $\gamma=\alpha+(1,0)$ or $\gamma=\alpha+(0,1)$ because if we set $n_i = \nu_\alpha(f_i)$, $r=\nu_\alpha(R)$ and $f=Rf_i$, then
$$f^{(m)}=f^{(n_i)}_i R^{(r)} \mod \alpha$$
with $Rad\langle R^{(r)}\rangle \subset\langle x^{(\alpha_1)}y^{(\alpha_2)} \cdot *\rangle$ where $*$ is a product of pairwise distinct binomials. But since $\gamma\neq \alpha+(1,1)$ then this proves that $|\gamma|=|\alpha|+1$. 
\item If $m>\nu_\alpha(f)$ and $\nu_\gamma(f)> m+1$ then $\gamma$ is still minimal at level $m+1$ and 
$$|\gamma|\leq 1+|\alpha|+m-\nu_\alpha(f)< 1+|\alpha|+m+1-\nu_\alpha(f).$$
\item If $m>\nu_\alpha(f)$ and $\nu_\gamma(f)=m+1$ then $\gamma$ (which is minimal) either bifurcates to one of these  $\{\gamma+(1,0), \ \gamma+(0,1)\}$ or  when $\gamma\in T(f)$ it gives rise to a component
$$\overline{\mathbf{A}^2_{m+1}\cap F^\gamma_{j,\nu_\gamma(f_j)}}$$
whose codimension is $|\gamma|+1$. For the first case, as $\nu_\gamma(f)<\nu_{\gamma+\epsilon}(f)$ then at level $m+2$ we can write
$$|\gamma|+1\leq 1+|\alpha|+m+2-\nu_\alpha(f)$$
where $\gamma+\epsilon$ is minimal. Let us denote $\pi:=\pi_{m+1,m}: \mathbf{A}^2_{m+1}\rightarrow \mathbf{A}^2_m$ the projection morphism. The second  case when $\gamma\in T(f)$ tells us  that
$$\mathbf{A}^2_{m+1}\cap F^{\gamma}_{j,\nu_\gamma(f_j)}=\{f_j^{[\nu_\gamma(f_j)]}=0\} \cap \pi^{-1}[\mathbf{A}^2_m\cap F^\gamma_{j,\nu_\gamma(f_j)-1}]= \{f^{[\nu_\gamma(f_j)]}_j=0\}\cap \pi^{-1}(\mathcal{H}^\gamma_m).$$
Since the inclusion $\mathcal{H}^\gamma_m\subsetneq\overline{\mathbf{A}^2_m\cap F^\alpha_{i,k}}$ is strict, then
$$\{f^{[\nu_\gamma(f_j)]}_j=0\}\cap \pi^{-1}(\mathcal{H}^\gamma_m)\subsetneq \{f^{[\nu_\gamma(f_j)]}_j=0\}\cap \overline{\mathbf{A}^2_{m+1}\cap F^\alpha_{i,k}},$$
$$\mathbf{A}^2_{m+1}\cap F^\gamma_{j,\nu_\gamma(f_j)} \subsetneq \overline{\mathbf{A}^2_{m+1}\cap F^\alpha_{i,k}}\cap \{f^{[\nu_\gamma(f_j)]}_j=0\},$$
$$\overline{\mathbf{A}^2_{m+1}\cap F^\gamma_{j,\nu_\gamma(f_j)}} \subsetneq \mathcal{H}^\alpha_{m+1}\cap\{f_i^{[\nu_\alpha(f_i)]}=...=f_i^{(k)}=f^{[\nu_\gamma(f_j)]}_j=0\}.$$
As the inclusion is strict, then  by comparing codimensions of the irreducible sets,  we obtain
$$1+|\gamma| > 2+|\alpha|+m-\nu_\alpha(f),$$ which contradicts 
  the induction hypothesis: $|\gamma|\leq 1+|\alpha|+m-\nu_\alpha(f)$. Therefore, the case  $\gamma\in T(f)$ can not occur.
\end{itemize}
Thus, we have shown $|\gamma|\leq 1+|\alpha|+m-
\nu_\alpha(f)$, and
 the proof is complete.


\end{proof}

\section{Main Theorems} \label{sec:theorem}
In this section, we describe the irreducible components of the jet schemes over the origin of a Newton non-degenerate plane curve singularity. First, we fix some notation and  present some basic definitions. 

Let $f\in \mathcal{O}_{k^2}$ be Newton non-degenerate. A \emph{tropical decomposition} of $f$ is $f=f_1...f_t$ where $T(f_i)$ is a semi-line in $\mathbf{R}_{\geq 0}^2$ passing through the origin and $T(f_i)\cap T(f_j)=\{(0,0)\}$ for $1\leq i<j\leq t$.
Next, let  $\sigma=Conv[T(f)]$, and define
$$A_{m}(i)=\{\alpha\in T(f_i)\cap \mathbf{N}^2 : \nu_\alpha(f)\leq m \}$$ 
 $$B^*_m\subseteq B_m = \min_{\leq\times \leq} \{ \gamma\in \mathbf{N}^2 : \nu_{\gamma}(f)>m\}$$
 where $B^*_m$ is obtained  from $B_m$ by removing at $m=\nu_\alpha(f)$, every $\alpha+(1,1)$ with $\alpha\in B_{m-1}\cap \partial \sigma$.
 
 The next theorem gives us the decomposition of $\mathcal{C}^0_m$ into irreducible components.

\begin{theorem} \label{th : decom}
Let $f=f_1...f_t\in \mathcal{O}_{k^2}$ be a tropical decomposition of a Newton non-degenerate polynomial. Decompose $f_i=\prod_{j=1}^{r_i} f_{(i,j)}$ into irreducible factors in $\mathcal{O}_{k^2}$.
Then, we have the  following decomposition of $\mathcal{C}^0_m$ into irreducible components 
 $$\mathcal{C}^0_m = \bigcup_{\gamma\in B^*_m} \mathcal{H}^\gamma_m \cup \bigcup_{i=1}^t \bigcup_{\alpha\in A_{m}(i)}  \bigcup_{j=1}^{r_i} \overline{ \mathbf{A}^2_m\cap F^\alpha_{(i,j),k_{ij}}}$$
 where $k_{ij}=\nu_\alpha(f_{ij})+m-\nu_\alpha(f)$.

\end{theorem}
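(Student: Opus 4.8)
The plan is to prove the stated equality by a double inclusion, together with a verification that the listed pieces are genuinely the irreducible components (i.e.\ irreducible, and pairwise incomparable so that none is redundant). First I would establish the set-theoretic identity
$$\mathcal{C}^0_m = \bigcup_{\gamma\in B_m} \mathcal{H}^\gamma_m \cup \bigcup_{i=1}^t \bigcup_{\alpha\in A_{m}(i)} \bigcup_{j=1}^{r_i} \overline{ \mathbf{A}^2_m\cap F^\alpha_{(i,j),k_{ij}}}$$
\emph{before} passing from $B_m$ to $B_m^*$. A point of $\mathcal{C}^0_m$ is a jet $(x(t),y(t))$ with $x(0)=y(0)=0$ and $\mathrm{ord}_t f(x(t),y(t))\geq m+1$. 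Given such a jet, let $\alpha=(\mathrm{ord}_t x,\mathrm{ord}_t y)\in \mathbf{N}^2$ (with the convention that a zero coordinate truncation forces the jet into some $\mathcal{H}^\gamma_m$). If $\nu_\alpha(f)>m$ then the jet lies in $\mathcal{H}^\alpha_m$, hence in $\mathcal{H}^\gamma_m$ for the corresponding minimal $\gamma\in B_m$. If $\nu_\alpha(f)\leq m$, I would show $\alpha$ must lie on the tropical variety $T(f)$: off $T(f)$ the initial form $in_\alpha(f)$ is a monomial (this is exactly the Newton non-degeneracy characterization recalled after Theorem \ref{thm: newton shape}), so $f^{[\nu_\alpha(f)]}$ is a nonzero monomial in $x^{(\alpha_1)},y^{(\alpha_2)}$ modulo $\mathcal{H}^\alpha_\infty$, which is incompatible with $x^{(\alpha_1)}\neq 0\neq y^{(\alpha_2)}$ and $f^{[\nu_\alpha(f)]}=0$; so such a jet actually satisfies a stronger vanishing and falls into an $\mathcal{H}^\gamma_m$ after all. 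Once $\alpha\in T(f_i)$ for a unique $i$, Lemma \ref{lm : T} splits $F^\alpha_m$ along the branches: the jet lies in $F^\alpha_{(i,j),k_{ij}}$ for the branch $f_{(i,j)}$ whose initial binomial $y^p-c_j x^q$ vanishes on the initial data of the jet. This gives the $\subseteq$ inclusion; the reverse $\supseteq$ is immediate since each $\mathcal{H}^\gamma_m$ with $\gamma\in B_m$ has $\nu_\gamma(f)>m$, hence sits inside $\mathcal{C}^0_m$, and each $\overline{\mathbf{A}^2_m\cap F^\alpha_{(i,j),k_{ij}}}$ is contained in $\{f^{[\nu_\alpha(f)]}=\cdots=f^{(m)}=0\}\subseteq \mathcal{C}^0_m$ (using $k_{ij}=\nu_\alpha(f_{ij})+m-\nu_\alpha(f)$ and the product expansion $f^{(m)}=f_{(i,j)}^{(k_{ij})}\cdot(\text{unit-type factor})\bmod F^\alpha_{(i,j),k_{ij}-1}$ as in Lemma \ref{lm : T}).

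Next I would show each listed piece is irreducible: the $\mathcal{H}^\gamma_m$ are coordinate subspaces, hence irreducible of codimension $|\gamma|$; the $\overline{\mathbf{A}^2_m\cap F^\alpha_{(i,j),k_{ij}}}$ are irreducible of codimension $1+|\alpha|+m-\nu_\alpha(f)$ by Lemma \ref{lm : codim+irr} applied to the branch $f_{(i,j)}$ (noting $k_{ij}=\nu_\alpha(f_{(i,j)})+m-\nu_\alpha(f)$ and $\nu_\alpha(f_{(i,j)})\leq k_{ij}\leq m$). Then comes the redundancy analysis, organized exactly by Table 2 and the theorems of Section \ref{sec:crucial}. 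Among the $\mathcal{H}$-pieces, $\mathcal{H}^\alpha_m\subseteq\mathcal{H}^\beta_m$ iff $\alpha\geq\beta$ componentwise, so only the minimal $\gamma$'s survive — that is precisely $B_m$, except that when $m=\nu_\alpha(f)$ for $\alpha\in B_{m-1}\cap\partial\sigma$ the point $\gamma=\alpha+(1,1)$ becomes swallowed by the infinite component $\overline{\mathbf{A}^2_m\cap F^\alpha_{(i,j),k_{ij}}}$; this is exactly the content of Lemma \ref{lm : HU-ok} and Theorem \ref{th : M1+}, and removing these $\gamma$'s is the definition of $B_m^*$. Conversely no infinite component is contained in an $\mathcal{H}^\gamma_m$ (Lemma \ref{lm : UH}), and no $\mathcal{H}^\gamma_m$ with $\gamma\in B_m^*$ (hence $\gamma\neq\alpha+(1,1)$, or $m>\nu_\alpha(f)$) is contained in an infinite component (Theorem \ref{lm : HU} together with the second half of Lemma \ref{lm : HU-ok}). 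Finally, two infinite components $\overline{\mathbf{A}^2_m\cap F^\alpha_{(i,j),k}}$ and $\overline{\mathbf{A}^2_m\cap F^\beta_{(i',j'),k'}}$ are comparable only when they coincide, by Theorem \ref{lm : inclusion}, which forces $\alpha=\beta$ and $f_{(i,j)}=f_{(i',j')}$; so the triple union over $i,\alpha\in A_m(i),j$ lists each infinite component exactly once.

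Assembling these: every point of $\mathcal{C}^0_m$ lies in one of the listed closed irreducible sets, none of which is contained in another, so they are precisely the irreducible components, and dropping the $B_m\setminus B_m^*$ terms loses nothing since those $\mathcal{H}^\gamma_m$ are contained in infinite components. I expect the main obstacle to be the careful bookkeeping in the set-theoretic $\subseteq$ step when $\alpha$ has a zero coordinate or when the jet's true ``contact vector'' forces it deeper into a stratum than the naive $(\mathrm{ord}_t x,\mathrm{ord}_t y)$ suggests — i.e.\ making rigorous the claim that every jet not captured by an infinite component is captured by some $\mathcal{H}^\gamma_m$, which ultimately rests on the monomial-initial-form characterization of non-degeneracy and a descending induction on $m$. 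The inclusion-lattice part, by contrast, is essentially already done in Section \ref{sec:crucial}; here one only has to cite Theorems \ref{th : M1+}, \ref{lm : inclusion}, \ref{lm : HU} and Lemmas \ref{lm : UH}, \ref{lm : HU-ok} in the right order and check that ``$B_m\to B_m^*$'' matches the exceptional inclusion in Lemma \ref{lm : HU-ok}.
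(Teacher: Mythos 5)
Your proposal is correct and follows essentially the same route as the paper: stratify $\mathcal{C}^0_m$ by the contact orders $\alpha=(\mathrm{ord}_t x,\mathrm{ord}_t y)$, use the monomial-initial-form characterization of non-degeneracy to kill the strata with $\nu_\alpha(f)\le m$ off the tropical variety, split along branches via Lemma \ref{lm : T}, and then prune redundancies (including the passage $B_m\to B_m^*$) with Theorem \ref{th : M1+} and the inclusion results of Section \ref{sec:crucial}. The only quibble is the phrase that a jet in an empty stratum ``falls into an $\mathcal{H}^\gamma_m$ after all'' --- the correct conclusion is simply that the stratum is empty, as your preceding incompatibility argument already shows.
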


\begin{proof}

 Recall that for $v\in \{x,y\}$,  we know  that $Cont^{>0}_{\mathcal{C}}(v)_m=\bigsqcup_{i=1}^m Cont^i_\mathcal{C}(v)_m$ where 
	$$Cont^i_{\mathcal{C}}(v)_m=\{v^{(i)}\neq 0\}\cap \{v^{(0)}=...=v^{(i-1)}=0\}\cap \{f^{(0)}=...=f^{(m)}=0\}.$$
	Now setting $Cont^{(i,j)}_\mathcal{C}(x,y)_m=Cont^i_\mathcal{C}(x)_m\cap Cont^j_\mathcal{C}(y)_m$, and observing that 
	$$\mathcal{C}^0_m=\mathcal{H}^{(1,1)}_m\cap F_m=Cont^{>0}_{\mathcal{C}}(x)_m\cap Cont_{\mathcal{C}}^{>0}(y)_m=\bigsqcup_{1\leq  i,j\leq m} Cont^{(i,j)}_\mathcal{C}(x,y)_m,$$ and
    $Cont^{(i,j)}_\mathcal{C}(x,y)_m=F^{(i,j)}_{m}\cap \mathbf{A}^2_m$  for  $\alpha=(i,j)$, then clearly we obtain
	$$\mathcal{C}^0_m=\bigsqcup_{\alpha\in \Omega} (F^\alpha_m \cap \mathbf{A}^2_m)=\left(\bigsqcup_{\alpha\in \Omega^{>}} (F^\alpha_m\cap \mathbf{A}^2_m)\right) \sqcup \left(\bigsqcup_{\alpha\in \Omega^{\leq}} (F^\alpha_m\cap \mathbf{A}^2_m) \right) $$
	where $\Omega=[1,m]^2\cap \mathbf{N}^2$, $\Omega^{>}=\{\alpha\in \Omega : \nu_\alpha(f)>m\}$ and $\Omega^{\leq }=\Omega\backslash \Omega^{>}$. Set
	$$A_m=\bigcup_{i=1}^t A_m(i).$$
	
	If $\alpha\notin A_m$  then 
	$f^{[\nu_\alpha(f)]}=c\cdot {x^{(\alpha_1)}}^{k_1}{y^{(\alpha_2)}}^{k_2} \mod \alpha$ for some constant $c$.
	As $x^{(\alpha_1)}$ and $y^{(\alpha_2)}$ are $\neq 0$ in $F^\alpha_m$ then $F^\alpha_m\cap \mathbf{A}^2_m=\varnothing$ for $\alpha\notin A_m$. Note that  we can write:
	$$\mathcal{C}^0_m=\bigsqcup_{\alpha\in \Omega^{>}} (F^\alpha_m\cap \mathbf{A}^2_m) \sqcup \bigsqcup_{i=1}^t\bigsqcup_{\alpha\in A_m(i)} (F^\alpha_m \cap \mathbf{A}^2_m).$$
	In view of Lemma \ref{lm : T},  we know that if $\alpha\in A_m(i)$ then
	$F^\alpha_m=F^\alpha_{i,k}$ where $k=\nu_\alpha(f_i)+m-\nu_\alpha(f)$, and
	$$F^\alpha_{i,k}=\bigsqcup_{j=1}^{r_i}F^{\alpha}_{(i,j),k_{ij}}.$$
		Now, as $\mathcal{H}^\alpha_m=\overline{\mathbf{A}^2_m\cap F^\alpha_m}$ for $\nu_\alpha(f)>m$, then  after ordering along $\leq \times \leq $ and using Theorem \ref{th : M1+}, we obtain:
	$$\mathcal{C}^0_m=\bigcup_{\gamma\in B^*_m} \mathcal{H}^\gamma_m\cup \bigcup_{i=1}^t \bigcup_{\alpha\in A_{m}(i)}  \bigcup_{j=1}^{r_i} \overline{ \mathbf{A}^2_m\cap F^\alpha_{(i,j),k_{ij}}}.$$ 
	The irreducibility of these components follow then from Lemma \ref{lm : inclusion}, Lemma \ref{lm : HU} and Theorem \ref{lm : UH}.
\end{proof}

It is worth mentioning here that we have just shown that $Jet(f)$ has two kinds of vertices: vertices of the form $\mathcal{H}^\gamma_m$ and others that do not vanish and continue to $\infty$. The next theorem deals with the characterization of such vertices.  

\begin{theorem} \label{th : embed}
	Let $f=f_1...f_t\in \mathcal{O}_{k^2}$ be Newton non-degenerate ; $T(f_i)\simeq \mathbf{R}_{\geq 0}$,
	$$T(f_i)\cap T(f_j)=\{(0,0)\} : i<j$$
Decompose $f_i=\prod_{j=1}^{r_i} f_{ij}$ into irreducible factors. Let $V_m\in irr(\mathcal{C}^0_m)$ with weight $(d,e)$. Then, $V_m$ is of the form $\mathcal{H}^\gamma_m$ for some $\gamma\in B^*_m$ if and only if $e=d$. Otherwise, $e>d$.
\end{theorem}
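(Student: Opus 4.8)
The plan is to go through the two families of irreducible components produced by Theorem \ref{th : decom} — the $\mathcal{H}^{\gamma}_m$ with $\gamma\in B^*_m$, and the closures $\overline{\mathbf{A}^2_m\cap F^{\alpha}_{(i,j),k_{ij}}}$ — and to show that the first family satisfies $d=e$ while the second satisfies $e>d$. Since by Theorem \ref{th : decom} these two families together exhaust $irr(\mathcal{C}^0_m)$, and the conditions $d=e$ and $e>d$ are mutually exclusive, the stated equivalence is immediate once both one-sided statements are proved.

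For the first family this is essentially a tautology: $\mathcal{H}^{\gamma}_m$ is, by its very definition, the common zero locus in $\mathbf{A}^2_m$ of a subset of the coordinate functions $x^{(0)},\dots,x^{(\gamma_1-1)},y^{(0)},\dots,y^{(\gamma_2-1)}$, hence a coordinate linear subspace of $\mathbf{A}^2_m$, in particular smooth; therefore its Krull dimension and its embedding dimension coincide, i.e.\ $d=e$ (both equal $2(m+1)-\min(\gamma_1,m+1)-\min(\gamma_2,m+1)$).

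For the second family, fix $V=\overline{\mathbf{A}^2_m\cap F^{\alpha}_{(i,j),k}}$ with $\alpha\in T(f_{ij})\cap\mathbf{N}^2$, $\nu_\alpha(f)\le m$ and $k=\nu_\alpha(f_{ij})+m-\nu_\alpha(f)$. Write $T(f_{ij})=\mathbf{R}_{\ge 0}(p,q)$ and $\alpha=a(p,q)$; because $f_{ij}$ carries a Puiseux characteristic pair (Proposition \ref{th : new-br}) we have $p,q\ge 2$, hence $f_{ij}\in\mathfrak{m}^2$, and consequently every coefficient $f_{ij}^{(\ell)}$ in $f_{ij}(x(t),y(t))=\sum_\ell f_{ij}^{(\ell)}t^\ell$ is a polynomial in the jet variables all of whose monomials have degree $\ge 2$, so $df_{ij}^{(\ell)}$ vanishes at the origin $P$ of $\mathbf{A}^2_m$. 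By Lemma \ref{lm : codim+irr} together with Lemma~4.3 of \cite{M_eq}, $V$ is cut out in $\mathbf{A}^2_m$ exactly by the $|\alpha|$ coordinate equations of $\mathcal{H}^{\alpha}_m$ and the equations $f_{ij}^{[\nu_\alpha(f_{ij})]}=\dots=f_{ij}^{(k)}=0$, and $\mathrm{codim}(V;\mathbf{A}^2_m)=1+|\alpha|+m-\nu_\alpha(f)=:c$, so $d=2(m+1)-c$. Next, $P$ lies in $V$: the $\mathbf{G}_m$-action $\lambda\cdot x^{(i)}=\lambda^i x^{(i)}$, $\lambda\cdot y^{(j)}=\lambda^j y^{(j)}$ makes each $f^{(\ell)}$ homogeneous and preserves $\mathcal{H}^{\alpha}_m$ and $\{x^{(\alpha_1)}\neq0\}$, hence preserves $\mathbf{A}^2_m\cap F^{\alpha}_{(i,j),k}$ and its closure $V$; since on $\mathcal{H}^{\alpha}_m\supseteq V$ every coordinate weight is $\ge\min(\alpha_1,\alpha_2)\ge 1>0$, one has $\lim_{\lambda\to0}\lambda\cdot v=P$ for every $v\in V$, so $P\in V$ by closedness. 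Finally apply the Jacobian criterion at $P$: the $|\alpha|$ coordinate equations contribute rank $|\alpha|$ and the $f_{ij}^{(\ell)}$ contribute nothing, so the Jacobian has rank $|\alpha|<c$; hence $V$ is singular at $P$ and $e\ge\dim_k T_PV=2(m+1)-|\alpha|=d+(c-|\alpha|)=d+\bigl(1+m-\nu_\alpha(f)\bigr)>d$.

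The only genuinely delicate points are, first, invoking the precise equational (complete-intersection) description of the $F^{\alpha}$-components from Lemma~4.3 of \cite{M_eq}, which is what makes the Jacobian criterion yield the exact tangent space dimension $2(m+1)-|\alpha|$ at $P$; and second, the standing hypothesis that each branch is singular, i.e.\ $p,q\ge 2$. The latter is essential rather than cosmetic: for a smooth branch ($p=1$ or $q=1$) the corresponding $F^{\alpha}$-component is a smooth non-linear graph, so it would have $e=d$ without being of the form $\mathcal{H}^{\gamma}_m$; thus the second half of the statement really requires that every branch of $\mathcal{C}$ have a Puiseux characteristic pair, as in Proposition \ref{th : new-br}. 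Everything else is dimension bookkeeping and the elementary $\mathbf{G}_m$-limit argument for $P\in V$.
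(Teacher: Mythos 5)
Your handling of the coordinate components and your overall strategy are fine, and your route through the second half is genuinely different from the paper's: the paper gets the lower bound $e\ge 2(m+1)-|\alpha|$ from an asserted surjection $\mathcal{O}_{V_m}\to\mathcal{O}_{\mathcal{H}^\alpha_m}$ (which, as written, goes the wrong way, since $V_m\subsetneq\mathcal{H}^\alpha_m$), whereas you localize at the origin, prove $0\in V_m$ by the weighted $\mathbf{G}_m$-contraction (a correct step the paper leaves implicit), and then try to compute $T_0V_m$. Your remark about smooth branches is also a genuine catch: for an ordinary node $(y-x)(y-2x)$ the infinite components at level $2$ are linear subspaces with $e=d$ that are not of the form $\mathcal{H}^\gamma_m$, so the "only if" direction really does need every branch to satisfy $p,q\ge 2$ as in Proposition \ref{th : new-br}.

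There is, however, a gap at the decisive step. You assert that $V=\overline{\mathbf{A}^2_m\cap F^\alpha_{(i,j),k}}$ is cut out \emph{exactly} by the coordinate equations of $\mathcal{H}^\alpha_m$ together with $f_{ij}^{[\nu_\alpha(f_{ij})]}=\dots=f_{ij}^{(k)}=0$. For $m>\nu_\alpha(f)$ this is false: the common zero locus $Z$ of those equations is in general reducible (already for $f=y^2-x^3$, $\alpha=(2,3)$, $m=k=7$, one has $Z=V\cup\mathcal{H}^{(3,4)}_7$ with both components of the same dimension), and $V$ is only the component obtained by saturating with respect to $x^{(\alpha_1)}y^{(\alpha_2)}$. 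Hence $I(V)$ may strictly contain the ideal $J$ generated by your equations, and the Jacobian criterion applied to $J$ only gives $T_0V\subseteq T_0Z$, i.e. $\dim T_0V\le 2(m+1)-|\alpha|$ --- an upper bound, which is the wrong direction for bounding $e$ from below; Lemma~4.3 of \cite{M_eq} describes the component on the open set $x^{(\alpha_1)}\neq0$, which does not contain the origin, so it does not control $I(V)$ there. To close the gap you must exclude extra linear forms in $I(V)$: for instance, use that $I(V)$ is homogeneous for the weights $w(x^{(i)})=w(y^{(i)})=i$ (by the very $\mathbf{G}_m$-invariance you establish), so any linear form in $I(V)$ decomposes into forms $ax^{(i)}+by^{(i)}\in I(V)$, and then verify directly on the dense subset $\mathbf{A}^2_m\cap F^\alpha_{(i,j),k}$ that no such form vanishes other than the coordinate functions of $\mathcal{H}^\alpha_m$. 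Only in the boundary case $m=\nu_\alpha(f)$, where a single irreducible equation $f_{ij}^{[\nu_\alpha(f_{ij})]}$ is present and $Z$ is irreducible, is your argument complete as written.
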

\begin{proof}
First, we set $c=codim(V_m ; \mathbf{A}^2_m)$. If $V_m=\mathcal{H}^\gamma_m$ then $c=|\gamma|$. Next, we show that $e+c=2(m+1)$. For,
$$\mathcal{O}_{V_m}= \mathcal{O}_{\mathbf{A}^2_m}/\langle x^{(0)},..., x^{(\gamma_1-1)},y^{(0)},..., y^{(\gamma_2-1)} \rangle ,$$
and $e\in \mathbf{N}$ is the least integer such that we have a surjective map of rings 
$$\mathcal{O}_{k^e}\rightarrow \mathcal{O}_{V_m}.$$
Hence, $e=dim(\mathbf{A}^2_m)-|\gamma|=2(m+1)-|\gamma|$. Thus, $e+c=2(m+1)$ i.e. $e=d$. Now, if $V_m$ is not of the form $\mathcal{H}^\gamma_m$ then there exists $\alpha\in T(f_{ij})$ for some $i,j$ such that
$$V_m=\overline{\mathbf{A}^2_m\cap F^\alpha_{ij,\nu_\alpha(f_{ij})+m-\nu_\alpha(f)}}$$
with $m\geq \nu_\alpha(f)$. Also, $c=|\alpha|+1+m-\nu_\alpha(f)$, then there are surjective ring maps:
$$\mathcal{O}_{k^e}\rightarrow \mathcal{O}_{V_m}\rightarrow \mathcal{O}_{\mathcal{H}^\alpha_m}.$$
However, this means that $e \geq 2(m+1)-|\alpha|$, and  then we can write
$$e+c\geq 2(m+1)-|\alpha|+c=2(m+1)+1+m-\nu_\alpha(f),$$
which in turn implies that $e+c>2(m+1)$. Thus, $e>d$, and the  proof is complete.
\end{proof}
Next, we  show that $Jet(f)$ determines the embedded topological type of the singularity of the Newton non-degenerate plane curve defined by $f$.

\begin{theorem} \label{th : grph}
	Let $f,g \in  \mathcal{O}_{k^2}$ be Newton non-degenerate. If the jet-components graph $Jet(f)=Jet(g)$, then $f$ and $g$ have the same embedded topological type. 
\end{theorem}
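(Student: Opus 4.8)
The plan is to read off from the abstract weighted leveled graph $Jet(f)$ exactly the data that Theorem \ref{th : semigrp} needs, namely the multiset of tropical rays of $f$ together with, for each ray $\mathbf{R}_{\geq 0}(p_i,q_i)$, the number $r_i$ of irreducible branches of $f$ lying on it. Once the multiset $\{(p_i,q_i;r_i):1\leq i\leq t\}$ is recovered, the theorem follows: for Newton non-degenerate plane curves the intersection number $(\beta,\beta')_0$ of two branches depends only on their tropical rays — it equals $p_iq_i$ when both lie on $\mathbf{R}_{\geq 0}(p_i,q_i)$, and $\min(p_iq_j,p_jq_i)$ when they lie on distinct rays $\mathbf{R}_{\geq 0}(p_i,q_i)\neq\mathbf{R}_{\geq 0}(p_j,q_j)$ (a short Puiseux computation, using non-degeneracy, the single characteristic pair of each branch from Proposition \ref{th : new-br}, and the fact that distinct branches on a common ray have distinct coefficients $c$ by Theorem \ref{thm: newton shape}). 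Hence $Jet(f)=Jet(g)$ forces a bijection between the branches of $f$ and those of $g$ under which all tropical rays and all pairwise intersection numbers coincide, and Theorem \ref{th : semigrp} gives the equality of embedded topological types.

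First I would separate the two kinds of vertices: by Theorem \ref{th : embed} a vertex of weight $(d,e)$ is a hyperplane component $\mathcal{H}^\gamma_m$ exactly when $d=e$, and is a non-vanishing (``infinite'') component otherwise, so the partition is determined by the weights. By Theorem \ref{th : decom} and Theorem \ref{lm : inclusion} the infinite vertices assemble into pairwise disjoint one-sided infinite strands: the strand of index $(\alpha,f_{ij})$ — $\alpha$ a lattice point on a tropical ray — consists of the vertices $\overline{\mathbf{A}^2_m\cap F^{\alpha}_{(i,j),k_{ij}}}$ for $m\geq\nu_\alpha(f)$, each mapping under $\tau_{m+1}$ onto its predecessor. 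Thus each strand has a well-defined birth level equal to $\nu_\alpha(f)$, and at that level its vertex has codimension $1+|\alpha|$ (from $\mathrm{codim}=1+|\alpha|+m-\nu_\alpha(f)$ evaluated at $m=\nu_\alpha(f)$), so both $\nu_\alpha(f)$ and $|\alpha|$ are graph-theoretic invariants of the strand.

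Next I would recover the rays and their multiplicities from the hyperplane part of $Jet(f)$ and its attachments to the strands. Tracing the dynamics of the minimal sets $B^*_m$ underlying Theorem \ref{th : decom}, one checks that, after forgetting levels, the hyperplane vertices retract onto the disjoint union of the staircase walks of Theorem \ref{walk}: for each ray $i$ and each $s\geq 1$ the vertices $\mathcal{H}^\gamma_m$ with $\gamma$ on the walk segment from $s(p_i,q_i)+(1,1)$ to $(s+1)(p_i,q_i)$ form a chain whose bottom vertex $\mathcal{H}^{\,s(p_i,q_i)+(1,1)}_{\,\nu_{s(p_i,q_i)}(f)+1}$ is linked in $Jet(f)$ to all $r_i$ infinite strands of index $s(p_i,q_i)$ — by Lemma \ref{lm : HU-ok}, which places $\mathcal{H}^{\,s(p_i,q_i)+(1,1)}_{\,\nu_{s(p_i,q_i)}(f)}$ inside each of those infinite components — and is disjoint from every other infinite component by Theorem \ref{lm : HU} and Lemma \ref{lm : HU-ok}, while at its top the chain is absorbed into the strands of index $(s+1)(p_i,q_i)$ when those are born. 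Reading off the block pattern of such a chain recovers $[[d_1^{(i)},\dots,d_{p_i}^{(i)}]]=\mathrm{SC}(q_i/p_i)$, hence $p_i$ (the number of blocks) and $q_i=\sum_k d_k^{(i)}$ by Lemma \ref{r_p}, hence the pair $(p_i,q_i)$; the relation ``linked by such a chain'' partitions the strands into the $t$ rays, and the number of strands linked to a common chain bottom equals $r_i$. (The dual case $q_i<p_i$ is treated by interchanging the two coordinates, as in Section \ref{stair}.) This yields $\{(p_i,q_i;r_i)\}$ from $Jet(f)$ alone.

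The hard part will be the retraction statement in the previous paragraph: one must verify carefully that the hyperplane part of $Jet(f)$ really does collapse onto the disjoint union of the walks $J_{SC}(f_i)$ — in particular that walks belonging to different tropical rays do not interleave in a way that would blur the block sequence $\mathrm{SC}(q_i/p_i)$, and that each walk-chain attaches unambiguously to its two neighbouring families of infinite strands — so that $(p_i,q_i)$ and the strand-to-ray grouping can genuinely be reconstructed from the abstract graph. This is a detailed bookkeeping of the $B^*_m$-evolution already implicit in the proof of Theorem \ref{th : decom}, but it is the step that carries the weight of the argument.
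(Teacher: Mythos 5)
Your proposal is correct and follows essentially the same route as the paper's proof: separate hyperplane from infinite vertices via Theorem \ref{th : embed}, recover the tropical rays and the number $r_i$ of branches on each ray from the graph, observe that all pairwise intersection numbers of branches are then determined, and conclude with Theorem \ref{th : semigrp}. The only difference is one of emphasis — you reconstruct each $(p_i,q_i)$ explicitly from the staircase block pattern $\mathrm{SC}(q_i/p_i)$ and honestly flag that step as the delicate one, whereas the paper reads the needed invariants ($\nu_{\alpha_i}(f)$, $\nu_{\alpha_i}(f_i)=p_iq_i$) more directly from the levels and the sets $B^*_m$.
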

\begin{proof}
By Theorem \ref{th : embed}, we see that  $J:=Jet(f)$ determines $B^*:=\bigcup_{m\in \mathbf{N}_0}B^*_m$. In other words, $B^*_m:=B^*_m(f) = B^*_m(g)$ for all $m$.
Let $R^*$ denote the induced subgraph of $J$ which is obtained by removing the vertices of the form $\mathcal{H}^\gamma_m$ with  $\gamma\in B^*_m$ and $\nu_\gamma(f)>m\in \mathbf{N}_0$ and such that $\mathcal{H}^\gamma_m$ could not be joined with a vertex outside of $B^*$. Next, we define $A=\{\alpha : \exists m : \mathcal{H}^\alpha_m \in R^*\}$, and we note that 
$$\alpha\in A \Leftrightarrow \mathcal{H}^\alpha_{\nu_\alpha(f)-1} \in R^*, \alpha\in T(f)\cap \mathbf{N}^2.$$ Now let
 $T = \bigcup_{\alpha\in A} \mathbf{R}_{\geq 0}\cdot \alpha$ which is the tropical variety of both $f$ and $g$ and let  $f=f_1...f_r$ and $g=g_1...g_s$ be written as product of branches. In addition, for $\alpha_i\in A$, let $T_{\alpha_i} = \mathbf{R}_{\geq 0} \cdot \alpha_i$  denote the tropical variety of the branches $f_{1},..., f_{r_i},g_{1},..., g_{s_i}$ with $r_i\leq r$ and $s_i\leq s$. Since $f$ and $g$ are Newton non-degenerate with the same graph $J$ then there are exactly $r_i=s_i$ vertices above $\mathcal{H}^{\alpha_i}_{\nu_{\alpha_i}(f)-1} = \mathcal{H}^{\alpha_i}_{\nu_{\alpha_i}(g)-1}$ connected to it. Then,  $r=s$ and for $1\leq i<j\leq l<\ell\leq r$ we have
$$(f_i,f_j)_0 = \nu_{\alpha_i}(f_i) = \nu_{\alpha_i}(f_j),$$
$$(f_i,f_\ell)_0 = \nu_{\alpha_i}(f_\ell).$$
Clearly, the same holds for $g$. Now if we denote by $F = f/(f_1...f_l)$ and $G = g/(g_1...g_l)$, then
$$l\nu_{\alpha_i}(f_1)+\nu_{\alpha_i}(F)=\nu_{\alpha_i}(f)=\nu_{\alpha_i}(g)=l\nu_{\alpha_i}(g_1)+\nu_{\alpha_i}(G),$$
$\nu_{\alpha_i}(f_1) = \mbox{lcm}(\alpha_{i,1},\alpha_{i,2})= \nu_{\alpha_i}(g_1)$. Since $\nu_{\alpha_i}(f)=\nu_{\alpha_i}(g)$ then $\nu_{\alpha_i}(F)=\nu_{\alpha_i}(G)$ which holds for all $\alpha_i\in A$. Thus, we have shown that $f$ and $g$ have the same number of branches and the same intersection multiplicities (of their branches) as well as the same tropical varieties for branches (since $r_i=s_i$). Finally, in view of Theorem \ref{th : semigrp}, $f$ and $g$ have the same embedded topological type. 
\end{proof}

As we did earlier in Figures \ref{walk(2,3)}, \ \ref{fig:lambda} and \ref{walk:T(f)}, we can associate to $J=Jet(f)$ a special staircase walk $J_{SC}(f)$ in $\mathbf{R}^2_{\geq 0}$ as follows. With the same notation as in Theorem \ref{th : grph}, and with $T_i = T(f_i)$ where we assume that $T_j$ above $T_i$ for $i<j$, then  we can define a road-map  for this walk: every drawn $\alpha\in \mathbf{N}^2$ represents the irreducible components of the form $\mathcal{H}^\alpha_m$ of $\mathcal{C}^0_m$ with $m<\nu_\alpha(f)$. The red arrows correspond to the irreducible components of $\mathcal{C}^0_m$ that do not vanish and continue till $\infty$. Up to this point, we distinguish the following 3 cases.
\begin{enumerate}
	\item If $t=1$, then the staircase walk is identical to that shown in Figure \ref{fig:lambda} except this time, we add $r$ red arrows above $\alpha$ at each $\alpha\in T(f)$.
More explicitly, if we let $\lambda\in L_{(p,q)}\cap \mathbf{N}^2_0$ and with the staircase notation $[[d_1,..., d_p]]$ associated to $(p,q)$ from Theorem \ref{walk}, then our walk is given by the following moves:
\begin{itemize}
	\item The starting point is  at $(1,1)$ where $\lambda=(0,0) $,
	\item move $d_1$  unit steps vertically followed by one horizontal step,
	\item move $d_2$  unit steps vertically followed by one horizontal step,
	\item[] \ \ \ \ \ \ \  \vdots	
	\item continuing this way, we move $d_{p-1}$  unit steps vertically followed by one horizontal step,
	\item  our last move here consists of  moving $d_p-1$ vertical steps to reach $(p,q)$.
	\item Next, we add this time $r$ red arrow above $(p,q)$,
	\item  then we take $\lambda=(p,q)$ and restart again.
\end{itemize}
	Now we keep repeating the above steps for $\lambda\in L_{(p,q)}\cap \mathbf{N}_0^2$ until reaching  $\lambda+(p,q)$ above which we add $r$ red arrows.    
It is worthy to note here that each time we \textit{move} from a point $\alpha$ to  either $\alpha+(0,1)$ or $\alpha+(1,0)$, we delete $\alpha$ and  replace it by the new point. However, the red arrows are never deleted. 

	\item If $t=2$, we set $\sigma=T_1+T_2$; then the staircase walk obeys the following rules:
	\begin{itemize}
		\item Start always at $(1,1)$,
\item if $\alpha\in \overset{\circ}{\sigma}$, then replace $\alpha$ with $\alpha+(0,1)$ and $\alpha+(1,0)$,
\item if $\alpha \in T_1$, then draw $r_1$ red arrows above $\alpha$ and add $\alpha+(1,0)$,
\item if $\alpha$ is above $T_2$ then replace $\alpha$ by $\alpha+(1,0)$,
\item if $\alpha\in T_2$, then draw $r_2$ red arrows above $\alpha$ and add $\alpha+(0,1)$,
\item if $\alpha$ is below $T_1$ then replace $\alpha$ by $\alpha+(0,1)$.

\end{itemize}
	
	\item If $t>2$, we set $T=T(f)$ and $\sigma=T_1+T_t$ then the staircase walk obeys the following rules:

\begin{itemize}
		\item Start always at $(1,1)$,
\item if $\alpha\in \sigma\backslash T$, then replace $\alpha$ with $\alpha+(0,1)$ and $\alpha+(1,0)$,
\item if $\alpha\in T_i$ where $2\leq i\leq t-1$, then  draw $r_i$ red arrows above $\alpha$ and add $\alpha+(1,0)$ with $\alpha+(0,1)$,
\item if $\alpha \in T_1$, then draw  $r_1$ red arrows above $\alpha$ and add $\alpha+(1,0),$
\item if $\alpha$ is above $T_t$ then replace $\alpha$ by $\alpha+(1,0)$,
\item if $\alpha\in T_t$, then draw $r_t$ red arrows above $\alpha$ and add $\alpha+(0,1)$,
\item if $\alpha$ is below $T_1$ then replace $\alpha$ by $\alpha+(0,1)$.
\end{itemize}
\end{enumerate}

\begin{table}[htbp]
\begin{tabular}{|l|l|l|}
\hline
\textbf{Irreducible components} & \textbf{Interpretation} & \textbf{Jet-components graph} \\ \hline
                      $\mathcal{H}^{\alpha}_m : m<\nu_\alpha(f)$ &    Big, Vanishing & $[\alpha,\nu_\alpha(f)-1]$                                  \\ \hline
                     $\mathbf{A}^2_m\cap F^\alpha_{ij,k} : \alpha\in T(f_i)$  & Non-vanishing & $(\alpha,ij,\infty)$                               \\ \hline
                       
\end{tabular}
\caption{Dictionary of notation for a Newton non-degenerate  $f$.} 
\end{table} 
 
 \begin{theorem}
 	For $\mathcal{C}=\{f=0\}\subset k^2$ a Newton plane curve singularity, we can extract the irreducible components of $\mathcal{C}^0_m$ for all $m\in \mathbf{N}$ from $J_{SC}(f)$.
 \end{theorem}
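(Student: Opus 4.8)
The plan is to combine the explicit decomposition of Theorem~\ref{th : decom} with the observation that every ingredient appearing in it is recorded by the weighted walk $J_{SC}(f)$. Recall that Theorem~\ref{th : decom} writes the irreducible components of $\mathcal{C}^0_m$ as the $\mathcal{H}^\gamma_m$ with $\gamma\in B^*_m$ together with the $\overline{\mathbf{A}^2_m\cap F^\alpha_{(i,j),k_{ij}}}$ with $\alpha\in A_m(i)$ and $1\le j\le r_i$, where $k_{ij}=\nu_\alpha(f_{ij})+m-\nu_\alpha(f)$. Thus it suffices to recover from $J_{SC}(f)$ the sets $B^*_m$, the rays $T(f_i)$, the integers $r_i$, and the orders $\nu_\alpha(f)$ and $\nu_\alpha(f_{ij})$ for the relevant $\alpha$.

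First I would recover the tropical data. By construction the red arrows of $J_{SC}(f)$ are attached precisely to the lattice points lying on $T(f)$; hence the rays $T_i$ are exactly the half-lines $\mathbf{R}_{\geq 0}\cdot\alpha$ through points carrying a red arrow, the number of red arrows over such a point is $r_i$ (and is constant along $T_i$), and ordering the rays from lowest to highest fixes $T_1,\dots,T_t$ and the value of $t$. This pins down which of the three walk rules ($t=1$, $t=2$, $t>2$) produces $J_{SC}(f)$, so the walk it encodes is intrinsic to $f$: by Theorem~\ref{walk}, Corollary~\ref{th : equiv} and the bifurcation rules, the unweighted walk is exactly the evolution of the staircase $B_m=\min_{\leq\times\leq}\{\gamma:\nu_\gamma(f)>m\}$, while the appearance of a red arrow at a point $\alpha\in\partial\sigma$ at level $m=\nu_\alpha(f)$ is exactly the deletion of $\alpha+(1,1)$ in passing from $B_m$ to $B^*_m$. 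Consequently the vertices drawn at ``time'' $m$ together with the red arrows active by time $m$ form exactly $B^*_m\sqcup\bigcup_i A_m(i)$, and the weight of a vertex $\alpha$ is $\nu_\alpha(f)-1$.

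Next I would translate this back into the component list. A weighted vertex $[\gamma,\nu_\gamma(f)-1]$ still alive at level $m$ contributes the component $\mathcal{H}^\gamma_m$, of codimension $|\gamma|$. For the infinite components, each $\alpha\in T_i\cap\mathbf{N}^2$ with $\nu_\alpha(f)\leq m$ contributes precisely $r_i$ components $\overline{\mathbf{A}^2_m\cap F^\alpha_{(i,j),k_{ij}}}$; here $\nu_\alpha(f)$ is the weight-plus-one of $\alpha$ read off the walk, and if $\alpha=a\cdot(p,q)$ with $T_i=\mathbf{R}_{\geq 0}(p,q)$ then $\nu_\alpha(f)$ is $a$ times the weight-plus-one of the first lattice point $(p,q)$ of $T_i$, while $\nu_\alpha(f_{ij})=a\,pq=lcm(\alpha_1,\alpha_2)$ because each $f_{ij}$ is irreducible with a single Puiseux pair and tropical line $T_i$ (Proposition~\ref{th : new-br}). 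Hence $k_{ij}$ and the codimension $1+|\alpha|+m-\nu_\alpha(f)$ (Lemma~\ref{lm : codim+irr}) are determined, and collecting the two families yields exactly the decomposition of Theorem~\ref{th : decom}, now read off from $J_{SC}(f)$.

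The hard part will be the faithfulness of the bookkeeping when $t>2$: one must check that at an interior tropical ray $T_i$ ($2\le i\le t-1$) the walker really must split into the two successors $\alpha+(1,0)$ and $\alpha+(0,1)$ while simultaneously recording $r_i$ red arrows, and that between consecutive tropical rays it follows exactly the staircase $\mathrm{SC}\!\left(\tfrac{q}{p}\right)$ of Theorem~\ref{walk} anchored at the correct lattice point of the lower ray; this is precisely the content hidden in the passage from $B_m$ to $B^*_m$ and in Lemma~\ref{lm : T}. Once this is pinned down, the remainder is the routine translation above together with the codimension formula of Lemma~\ref{lm : codim+irr}.
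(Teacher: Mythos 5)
Your proposal is correct and follows essentially the same route as the paper: both arguments are a bookkeeping translation of the weighted walk $J_{SC}(f)$ into the component list of Theorem~\ref{th : decom}, reading the vanishing components off the weighted vertices and the non-vanishing ones off the red arrows. The paper phrases this as a graph surgery producing $Jet(f)$ (inserting $w'-w-1$ vertices on each edge and infinitely many above each arrow), while you more explicitly recover the data $t$, $T_i$, $r_i$, $\nu_\alpha(f)$ and $\nu_\alpha(f_{ij})=\mathrm{lcm}(\alpha_1,\alpha_2)$ from the walk; this is a harmless elaboration of the same idea.
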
 
 \begin{proof}
 	Consider first an edge and denote by $(\alpha,w)$ and $(\alpha',w')$ with $w<w'<\infty $ the weighted vertices of this edge. Note that $\alpha'=\alpha+(1,0)$ or $\alpha'=\alpha+(0,1)$ with $w=\nu_\alpha(f)-1$ and $w'=\nu_{\alpha'}(f)-1$. Add on the edge $\alpha\alpha'$, $w'-w-1$ vertices representing the big components defined by the same $\alpha$. Now consider a non-vanishing component represented by $(\alpha,ij, \infty)$. Add on the arrow above $\alpha$, a countably infinite number of vertices. These represent the non-vanishing components for $m\geq \nu_\alpha(f)$ and $\alpha\in T(f)$. Finally, scale each weight $w$ by its level $m=w+1$. The obtained graph is the jet components graph.
 \end{proof}
\section{Generating Series} \label{sec:sum}
In this section, we introduce the generating series $G:=G(f)$  associated to a Newton non-degenerate power series $f\in K[[x,y]]$. If $\mathcal{C}=\{f=0\}$ denotes its associated plane curve singularity, then we define 
$$G=\sum_{K,m} u^{\text{codim}(K;\mathbf{A}^2_\infty)}\cdot v^m\in \mathbf{Z}[[u,v]]$$
where $m\geq 1$ and $K$ is an irreducible component in $\text{Cont}^m(\mathcal{C})^0\neq \varnothing$. Now given a sum $S$, then a subsummation of  $S$ is a sum whose terms form a subset of the terms of $S$. 

This section is dedicated to showing that $G=G(f)$ is rational in $u,v$, this means that $G\in \mathbf{Z}(u,v)$.
We shall always write $G=H+R$ where 
\begin{itemize}
    \item $H$ is the subsummation of $G$ with $K$ being of the form $\mathcal{H}^\beta_\infty$, and
    \item $R$ is the rest of the sum 
\end{itemize}
It is worthy to note that here we have used the letter $H$ because the $K$s appearing in $H$ are all irreducible components having hyperplane coordinates and $R=G-H$ is just the rest of the sum.

Let us start with the following two different examples.

\begin{example}
Let  $f=(y^2-x^3)(y^2-2x^3)$ whose  tropical variety $T(f)=\mathbf{R}_{\geq 0}(2,3)$ is clearly a semi-line. Set $G=G(f)$ and let 
$$R:=2\sum_{\alpha,m} u^{1+|\alpha|+m-\nu_\alpha(f)}v^m$$  
where $m\geq \nu_\alpha(f)$ and $\alpha\in T(f)$. Since $\alpha=s\cdot (2,3)$ for $s\in \mathbf{N}$, then
  $$R=2u\sum_{s=1}^{\infty} \sum_{m=12s}^\infty u^{5s+m-12s}v^{m}=\frac{2u}{1-uv}\sum_{s=1}^\infty u^{5s} v^{12s} = \frac{2u}{1-uv}\frac{u^5v^{12}}{1-u^5v^{12}}.$$
  Now, set $S_1 = (0,4)$, $S_2=(6,0)$ and let $S^*_i$ be
 the corresponding cone to $S_i$ in the dual Newton fan. In addition, let  
 $$H:=\sum_{s=1}^\infty u^{5s}v^{12s} + \sum_{\beta_1} u^{|\beta_1|}v^{\beta_1\cdot S_1}+\sum_{\beta_2} u^{|\beta_2|}v^{\beta_2\cdot S_2}$$
  where $\beta_i \in B^*_m\cap \overset{\circ}{S^*_i}, m\geq 1$. In order to describe where $\beta_i$ lies, note that 
  $$\beta_1-\lambda \in\{(1,1);(2,2)\}$$
  for $\lambda\in T(f)\cap \mathbf{N}_0^2$, and  so we get $\beta_2-\lambda\in \{(1,2)\}$ for the same $\lambda=(2s,3s)$. This means that
  $$H=\frac{u^5v^{12}}{1-u^5v^{12}}+ \sum_{s\geq 0}u^{5s+2}v^{12s+4}+\sum_{s\geq 0}u^{5s+4}v^{12s+8}+\sum_{s\geq 0}u^{5s+3}v^{12s+6}$$ 
$$H = \frac{u^5v^{12}}{1-u^5v^{12}}+\frac{u^2v^4}{1-u^5v^{12}}+\frac{u^4v^8}{1-u^5v^{12}}+\frac{u^3v^6}{1-u^5v^{12}}\in \mathbf{Z}(u,v).$$
Thus, $G=H+R\in \mathbf{Z}(u,v)$.
\end{example}

\begin{example}
Consider $f=f_1f_2=(y^2-x^3)(y^3-x^5)$ whose tropical variety is 
$$T(f)=\mathbf{R}_{\geq 0} (2,3)\cup \mathbf{R}_{\geq 0}(3,5).$$
Now we compute $$R=\underbrace{\sum_{\alpha_1,m_1}u^{1+|\alpha_1|+m_1-\nu_{\alpha_1}(f)}v^{m_1}}_{=R_1} +\underbrace{\sum_{\alpha_2,m_2}u^{1+|\alpha_2|+m_2-\nu_{\alpha_2}(f)}v^{m_1}}_{=R_2}$$
where $\alpha_i\in T(f_i)$, $m_i\geq \nu_{\alpha_i}(f)$. 
In this case,
$$R_1= u \sum_{s=1}^\infty \sum_{m=0}^\infty u^{5s+m}v^{m+15s}=\frac {u}{1-uv}\frac{u^5v^{15}}{1-u^5v^{15}}, $$ and
$$R_2 = \frac{u}{1-uv}\frac{u^8v^{24}}{1-u^8v^{24}}.$$
Next, let $S_1 = (0,5)$, $S_2 = (3,3)$ and $S_3 = (8,0)$ be the vertices of the Newton polygon from left to right. As earlier, let $S_i^*$ be the corresponding cone
to $S_i$ in the dual Newton fan. 
Then we can wrote 
$$H:=\sum_{m,\beta} u^{|\beta|}v^m=\underbrace{\sum_{r,s\geq 1} u^{5r+8s}v^{[r(2,3)+s(3,5)]\cdot (3,3)}}_{=H_{\overset{\circ}{\sigma_2}}}+\tilde{H}=\tilde{H}+\sum_{r,s} u^{5r+8s}v^{15r+24s}$$
where $\beta\in B^*_m, m\geq 1$ and $\sigma_2 = \mathbf{R}_{\geq 0}(2,3)+\mathbf{R}_{\geq 0}(3,5)=S^*_2$.
By computing the geometric series, we obtain
$$D:=H-\tilde{H} = \frac{u^5v^{15}}{1-u^5v^{15}}\frac{u^8v^{24}}{1-u^8v^{24}} \in \mathbf{Z}(u,v).$$
Next, we compute 
$$\tilde{H}= \sum_{s\geq 1}u^{5s}v^{s(2,3)\cdot (0,5)}+\sum_{s\geq 1}u^{8s}v^{s(3,5)\cdot (8,0)}+H^*$$
where $H^*$ is the subsummation of $H$ whose $\beta$ lies in the first and the last cone determined by the  orientation of the tropical decomposition.
Then, $\tilde{H}-H^*=\frac{u^5v^{15}}{1-u^5v^{15}}+\frac{u^8v^{24}}{1-u^8v^{24}}\in \mathbf{Z}(u,v)$ so that
$$H^* = \sum_{\beta_1} u^{|\beta_1|}v^{\beta_1\cdot S_1}+\sum_{\beta_3} u^{|\beta_3|}v^{\beta_3\cdot S_3}$$
where $\beta_i\in \overset{\circ}{S_i^*}\cap B^*_m, m\geq 1$, $S_1 = (0,5)$ and $S_3 = (8,0)$. Therefore, 
$$\beta_1-\lambda_1 \in \{(1,1),(2,2)\} : \lambda_1 = s_1(2,3), s_1\in \mathbf{N}_0$$
and $\beta_3-\lambda_3\in \{(1,2),(2,4)\} : \lambda_3 = s_3(3,5), s_3\in \mathbf{N}_0$. So we get $H^*$ equal to
$$ \sum_{s_1\geq 0} (u^{5s_1+2}v^{15s_1+5}+u^{5s_1+4}v^{15s_1+10})+\sum_{s_3\geq 0}(u^{8s_3+3}v^{24s_3+8}+u^{8s_3+6}v^{24s_3+16})$$
which leads to $$H^* = \frac{u^2v^5+u^4 v^{10}}{1-u^5v^{15}}+\frac{u^3v^8+u^6v^{16}}{1-u^8v^{24}}\in \mathbf{Z}(u,v).$$
Thus, $H\in \mathbf{Z}(u,v)$ and $G=H+R$ as well.
\end{example}

In the next lemma, we shall  write an expression for $R$ (which was introduced at the beginning of the section).

\begin{lemma}
Let $f\in K[x,y]$ be Newton non-degenerate whose tropical decomposition is written as $f=f_1... f_t$. Also, for $1\leq i\leq t$, let $f_i=\prod_{j=1}^{r_i}f_{i,j}$. Then,
$$R=\sum_{i,j,\alpha,m} u^{1+|\alpha|+m-\nu_\alpha(f)}v^m$$
where $1\leq i\leq t$, $1\leq j\leq r_i$, $\alpha\in T(f_i)$ and $m\geq \nu_\alpha(f)$. Moreover, $R\in \mathbf{Z}(u,v)$ and more explicitly,
$$R=\sum_{i}\frac{r_iu}{1-uv}\cdot \frac{u^{|\alpha_i|}v^{\alpha_i\cdot S_i}}{1-u^{|\alpha_i|}v^{\alpha_i\cdot S_i}}$$ 	where $T(f_i)=\mathbf{R}_{\geq 0}\cdot \alpha_i$, $gcd(\alpha_{i,1},\alpha_{i,2})=1$ and $S_{1},..., S_{t+1}$ are the vertices of the Newton polygon of $f$ (from left to right).
\end{lemma}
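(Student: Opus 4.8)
\emph{Overall plan.} The plan is to read the terms of $R$ directly off the decomposition of the jet schemes established in Theorem~\ref{th : decom}, substitute the codimension formula recalled just before Theorem~\ref{lm : inclusion}, and then evaluate the resulting multiple series as a nested product of geometric series. First I would identify the terms of $R$. By definition $R=G-H$ collects the pairs $(K,m)$ with $K$ an irreducible component of $\mathrm{Cont}^m(\mathcal{C})^0$ that is \emph{not} of hyperplane type, i.e. (by Theorem~\ref{th : embed}, equivalently by the explicit list in Theorem~\ref{th : decom}) exactly the components of the form $\overline{\mathbf{A}^2\cap F^{\alpha}_{(i,j),k}}$ with $\alpha\in T(f_i)\cap\mathbf{N}^2$, $1\le i\le t$, $1\le j\le r_i$, contact order $m\ge\nu_\alpha(f)$, and $k=\nu_\alpha(f_{ij})+m-\nu_\alpha(f)$. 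The splitting of a single $F^\alpha$ into the $r_i$ pieces indexed by $j$ is precisely Lemma~\ref{lm : T}. Moreover, by Theorem~\ref{lm : inclusion} two such components coincide only when they share the same $\alpha$ and come from the same branch $f_{ij}$, while for a fixed component the index $\alpha$ is the unique one with $V=\overline{\mathbf{A}^2\cap F^{\alpha}}$; hence there is no over-counting, and the terms of $R$ are in bijection with the quadruples $(i,j,\alpha,m)$ above.

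\emph{Codimension and summation.} Next I would substitute, for each such quadruple, the codimension formula recalled before Theorem~\ref{lm : inclusion}, namely that $\overline{\mathbf{A}^2\cap F^{\alpha}_{(i,j),k}}$ has codimension $1+|\alpha|+m-\nu_\alpha(f)$ in $\mathbf{A}^2_\infty$; this immediately gives
$$R=\sum_{i,j,\alpha,m} u^{1+|\alpha|+m-\nu_\alpha(f)}\,v^m,$$
with the indices ranging as above, which is the first assertion. To evaluate it, write $T(f_i)=\mathbf{R}_{\ge 0}\cdot\alpha_i$ with $\gcd(\alpha_{i,1},\alpha_{i,2})=1$, so that $T(f_i)\cap\mathbf{N}^2$ consists of the points $s\alpha_i$, $s\ge 1$. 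Since $T(f_i)$ is the ray of the dual Newton fan corresponding to the compact edge $[S_i,S_{i+1}]$, the minimum defining $\nu_\bullet(f)$ is attained on that edge, so $\nu_{s\alpha_i}(f)=s\,(\alpha_i\cdot S_i)$, and of course $|s\alpha_i|=s|\alpha_i|$. Fixing $i,j,s$ and putting $m=s(\alpha_i\cdot S_i)+\ell$ with $\ell\ge 0$, the sum over $m$ is the geometric series
$$\sum_{\ell\ge 0} u^{1+s|\alpha_i|+\ell}\,v^{s(\alpha_i\cdot S_i)+\ell}=\frac{u}{1-uv}\,\bigl(u^{|\alpha_i|}v^{\alpha_i\cdot S_i}\bigr)^{s}.$$
Summing over $s\ge 1$ (again a geometric series, in the variable $u^{|\alpha_i|}v^{\alpha_i\cdot S_i}$) and over the $r_i$ values of $j$ yields
$$R=\sum_{i=1}^{t}\frac{r_i\,u}{1-uv}\cdot\frac{u^{|\alpha_i|}v^{\alpha_i\cdot S_i}}{1-u^{|\alpha_i|}v^{\alpha_i\cdot S_i}},$$
which is a finite sum of elements of $\mathbf{Z}(u,v)$, hence $R\in\mathbf{Z}(u,v)$.

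\emph{Where the difficulty lies.} The geometric-series computation is routine; the only step requiring care is the first one, namely that the infinite components of $\mathrm{Cont}^m(\mathcal{C})^0$ are \emph{exactly} those attached to the irreducible factors $f_{ij}$ with $\alpha\in T(f_i)$ and $m\ge\nu_\alpha(f)$, with the stated codimension, and that each is counted once. This is precisely the content of Theorems~\ref{th : decom}, \ref{th : embed} and \ref{lm : inclusion} together with Lemma~\ref{lm : T}; once this dictionary is in place the identity follows from the elementary summation above, and one also records that the formula is symmetric in the roles of $x$ and $y$ so it applies verbatim to tropical rays of either slope.
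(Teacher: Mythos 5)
Your proposal is correct and follows essentially the same route as the paper: split $R$ into the pieces $R_i$ indexed by the rays $T(f_i)$, use $\nu_{s\alpha_i}(f)=s\,(\alpha_i\cdot S_i)$, and evaluate the double geometric series in $\ell$ and $s$. The only difference is that you spell out the justification (via Theorem~\ref{th : decom}, Lemma~\ref{lm : T} and Theorem~\ref{lm : inclusion}) that the terms of $R$ are in bijection with the quadruples $(i,j,\alpha,m)$ with the stated codimension, a step the paper's proof leaves implicit.
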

\begin{proof}
Set $R_i = \sum_{j,\alpha_i,m} u^{1+|\alpha_i|+m-\nu_{\alpha_i}(f)}v^m$ in such a way that $R=\sum_i R_i$. Since $\nu_{\alpha}(f) = \alpha \cdot S_i$ (because $\alpha\in T(f_i)$), then we obtain:
$$R_i = r_i\cdot \sum_{s\geq1} \sum_{m\geq 0} u^{1+s|\alpha_i|+m}v^{m+s\cdot \nu_{\alpha_i}(f)}=\frac{r_iu}{1-uv}\frac{u^{|\alpha_i|}v^{\alpha_i\cdot S_i}}{1-u^{|\alpha_i|}v^{\alpha_i\cdot S_i}}$$
which completes the proof.	
\end{proof}

Now let $f=f_1...f_t$ be a tropical decomposition of a Newton non-degenerate polynomial. Then, the  decomposition $f=f_1...f_t$ is \emph{oriented} in such a way that for all $i<j$, 
$$det[T(f_i), T(f_j)]>0$$
i.e. $T(f_j)$ is "above" $T(f_i)$ for which we write $T(f_j)>T(f_i)$.
Making use of this new concept of oriented tropical decomposition, we obtain the following observation.

\begin{observation}
\label{obs : uv}
    Let $f$ define a Newton Plane Curve singularity such that $(1,1)\notin supp(f)$. Then, for every $\alpha\in T(f)$, we have $\nu_\alpha(f)>|\alpha|$ unless $\alpha\in \mathbf{R}_{\geq 0}(1,1)$ with $f$ having two branches and $T(f) = \mathbf{R}_{\geq 0}\cdot (1,1)$.
\end{observation}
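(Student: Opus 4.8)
The plan is to reduce everything to a statement about the primitive generators of the rays of $T(f)$ and then extract the bound from the two endpoints of the corresponding edge of the Newton polygon, both of which — crucially — lie in $\mathrm{supp}(f)$. First I would reduce by homogeneity: a point $\alpha\in T(f)$ lies on a ray $\mathbf{R}_{\geq0}\cdot\rho$ with $\rho=(p,q)$ primitive, and since $\rho$ is dual to a compact edge of the Newton polygon of a curve singularity through the origin, $p,q\geq 1$ and $\alpha=s\rho$ for some $s>0$. Because $\nu_{s\rho}(f)=s\,\nu_\rho(f)$ and $|s\rho|=s(p+q)$, the desired inequality $\nu_\alpha(f)>|\alpha|$ is equivalent to $\nu_\rho(f)>p+q$, so it suffices to treat primitive $\rho$.

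Next I would use the Newton non-degeneracy characterization recalled after Proposition \ref{th : new-br}: for such a ray generator we may write $in_\rho(f)=c\,x^{a_0}y^{b_0}\prod_{j=1}^{\ell}(y^p-c_jx^q)$ with $c\neq 0$, the $c_j$ distinct and nonzero, and $\ell\geq 1$ (as $\rho$ is dual to a genuine one-dimensional face). Hence $\nu_\rho(f)=pa_0+qb_0+pq\ell$, and the two endpoints of this edge of the Newton polygon are $P_-=(a_0,\,b_0+p\ell)$ and $P_+=(a_0+q\ell,\,b_0)$, both vertices of $\mathrm{NP}(f)$ and in particular both in $\mathrm{supp}(f)$. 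I would then feed in the two hypotheses: "$f$ defines a singularity'' gives $(0,0),(1,0),(0,1)\notin\mathrm{supp}(f)$, and together with $(1,1)\notin\mathrm{supp}(f)$ this says that no point of $\mathrm{supp}(f)$ — in particular neither $P_-$ nor $P_+$ — lies in $\{(0,0),(1,0),(0,1),(1,1)\}$.

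The remaining argument is an elementary case analysis on $(\ell,a_0,b_0,p,q)$: one bounds $\nu_\rho(f)=pa_0+qb_0+pq\ell$ from below and discards every configuration in which $P_-$ or $P_+$ would fall into the forbidden set. Whenever $\ell\geq 2$, or an appropriate one of $a_0,b_0$ is positive, the bound $\nu_\rho(f)>p+q$ drops out directly; the only tight configuration that survives is $\ell=2$, $p=q=1$, $a_0=b_0=0$, where $in_{(1,1)}(f)=c(y-c_1x)(y-c_2x)$ and $(1,1)\notin\mathrm{supp}(f)$ forces the coefficient $-(c_1+c_2)$ of $xy$ to vanish, so $in_{(1,1)}(f)=c(y^2-c_1^2x^2)$ with $c_1\neq 0$. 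Since this initial form has no monomial factor, $f$ has no branch off the diagonal, hence exactly two branches and $T(f)=\mathbf{R}_{\geq0}\cdot(1,1)$, and there $\nu_{(1,1)}(f)=2=|(1,1)|$ — precisely the stated exception. The step I expect to be the real obstacle is the bookkeeping for the directions $\rho=(1,q)$ and $\rho=(p,1)$ adjacent to the axes: the crude estimates alone do not close these cases, and one must genuinely invoke membership of \emph{both} endpoints $P_\pm$ in $\mathrm{supp}(f)$ (for instance to force $a_0\geq 2$ when $p=1$, etc.). Organizing these sub-cases so that they collapse cleanly, rather than the individual inequalities, is the delicate part of the write-up.
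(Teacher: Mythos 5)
Your proposal is correct, and it takes a genuinely different route from the paper. The paper argues by contradiction globally: assuming $\nu_\alpha(f)\leq|\alpha|$ for some $\alpha=(m,n)\in T(f)$, it first forces the tropical variety to be a \emph{single} ray by observing that the supporting vertex $S$ would have to satisfy $\alpha\cdot((1,1)-S)=0$, which (since $\alpha$ is strictly positive and $(1,1)-S\leq 0$ componentwise when $S$ is an interior vertex) forces $S=(1,1)\in \mathrm{supp}(f)$, contradicting the hypothesis; only then does it write $\nu_\alpha(f)=rmn$, invoke Observation \ref{obs:branch} to get $n=1$, and use a Weierstrass-form smoothness argument to pin down $r=2$, $m=1$. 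You instead work edge-by-edge: the factorization $in_\rho(f)=c\,x^{a_0}y^{b_0}\prod_j(y^p-c_jx^q)$ gives the closed formula $\nu_\rho(f)=pa_0+qb_0+pq\ell$ valid for every ray simultaneously (the monomial factor absorbing the rest of the Newton polygon), and the forbidden configurations are eliminated because the two edge endpoints $P_\pm$ lie in $\mathrm{supp}(f)$ and hence avoid $\{(0,0),(1,0),(0,1),(1,1)\}$; the collapse to a single ray is not assumed but falls out in the surviving case from $a_0=b_0=0$. Your approach trades the paper's orthogonality trick and Weierstrass argument for a multi-case count, and you correctly flag the genuinely delicate sub-cases ($p=1$ or $q=1$ with $\ell=1$), where one must use \emph{both} endpoints --- e.g.\ $P_-=(a_0,1)$ forces $a_0\geq 2$ via $(1,1)\notin\mathrm{supp}(f)$ and $(0,1)\notin\mathrm{supp}(f)$. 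I verified that every case closes; the only caveat is that the case analysis is sketched rather than written out, but all the ideas needed to complete it are present and sound.
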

\begin{proof}
    Due to the symmetry $(x,y)\mapsto(y,x)$, then without loss of generality we assume that  $\alpha = (m,n)$ with $m\geq  n\geq 1$ and $\gcd(m,n)=1$. Suppose that
    $$\nu_\alpha(f_\alpha)\leq \nu_\alpha(f)\leq |\alpha|$$
    where $f_\alpha$ denotes the product of branches having tropical variety $\mathbf{R}_{\geq 0}\cdot \alpha$. 
    Now we  show that $f=f_\alpha$. If not, then $t\geq 2$ where $t$ is the number of semi-lines in the tropical variety $T(f)$. Denote by $f=f_1...f_t$ an oriented tropical decomposition of $f$. Since  $f_\alpha\in \{f_1,..., f_t\}$, then there exists $S=(a,b)$ a vertex in the Newton polygon of $f$ with $a,b\geq 1$ (as $t>1$) and $\nu_{\alpha}(f)=\alpha\cdot S$. Next, set $\eta=(1,1)-S$ then
    $$0\leq |\alpha|-\nu_\alpha(f) = \alpha\cdot \eta.$$
    Note that as $\alpha_1,\alpha_2\geq 1$, 
    and $a,b\geq 1$, then  this last inequality implies that $\alpha\cdot \eta=0$. However, $\alpha$ being in the first positive quadrant $\mathbf{R}^2_{> 0}$ and $\eta$ being in the third quadrant $\mathbf{R}^2_{\leq 0}$, they can not be orthogonal unless $\eta=0$ and then $S=(1,1)\in supp(f)$ which is a   contradiction. Therefore,  $t=1$ and $f=f_\alpha$. Observe that
    $$mn\leq rmn\leq m+n$$
    where $r$ is the number of branches of $f$. In view of Observation \ref{obs:branch}, $n=1$  which in turn implies that
    $$1\leq r \leq 1+\frac{1}{m}.$$
    If $m>1$, then $r=1$ and we can write in Weierstrass form $$f = y+a(x)$$
    and this leads to $f$ being smooth, which contradicts the fact that  $f$ defines a singularity. Therefore, $r=2$ and $m=1$. So that after performing a change of variables, we can write in Weiersrass form 
    $$f=(y-x+...)(y-cx+...).$$
    Thus, $|\alpha|=\nu_\alpha(f)=2$ with $\alpha=(1,1)$, and this concludes the proof of our observation.
\end{proof}

In the next lemma, we shall give an expression for $H$ (which was introduced
at the beginning of the section). It is worth recalling here the following:
\begin{itemize}
    \item if $T=\cup_i T_i$ is the tropical variety of $f$, then $H_{T_i}$ is the subsummation of $H$ where $\beta\in T_i$;
    \item if $\sigma_i=T_{i-1}+T_i$ then $H_{\overset{\circ}{\sigma_i}}$ is the subsummation of $H$ whose $\beta$ lie in the interior $\overset{\circ}{\sigma_i}$ of the cone.
\end{itemize}

\begin{lemma} \label{lm : sum}
	Let $f\in K[x,y]$ be a Newton non-degenerate polynomial whose oriented tropical decomposition is denoted $f=f_1...f_t$. Then $H$ is given by
	$$H=\sum_{m,\beta}u^{|\beta|}v^m$$
	where $m\geq 1$, $Cont^m(\mathcal{C})^0\neq \varnothing$ and $\beta\in B^*_m$. Also, $H\in \mathbf{Z}(u,v)$. Moreover, if $S_1,..., S_{t+1}$ are the vertices of the Newton polygon of $f$ from left to right, then, the following holds.
	\begin{itemize}
		\item If $t=1$ and $T(f)=\mathbf{R}_{\geq 0}(1,1)$, then $H = \frac{u^2v^s}{1-u^2v^s}$ where the Newton polygon of $f$ is the line=segment joining $(s,0)$ and $(0,s)$,
	\item If $t=1$ and $T(f)>\mathbf{R}_{\geq 0}(1,1)$, then $H = H_T + H_{\overset{\circ}{\sigma_1}}+H_{\overset{\circ}{\sigma_2}}$ with
	$$H_T = \frac{u^{|\alpha|}v^{\alpha\cdot S}}{1-u^{|\alpha|}v^{\alpha\cdot S}},$$
	and where $T(f)=\mathbf{R}_{\geq 0}\cdot \alpha$, $\gcd(\alpha_1,\alpha_2)=1$ and $S=(s,0)$ is a vertex of the Newton polygon of $f$ as well as
	$$H_{\overset{\circ}{\sigma_1}}=\frac{1}{1-u^{|\alpha|}v^{\alpha\cdot S}}\sum_{i=1}^p\sum_{j=D_{i-1}+1}^{D_i} u^{i+j}v^{js},$$
	$$H_{\overset{\circ}{\sigma_2}}=\frac{1}{1-u^{|\alpha|}v^{\alpha\cdot S}}\sum_{i=1}^p u^{i+D_i+1}v^{is}$$
	where $D_0 = 0$, $D_i = \sum_{j=1}^i d_j$ and the $d_i$'s  come from $SC(\alpha_2/\alpha_1)=[[d_1,..., d_p]]$.
	\item If $t>1$, then by setting  $T_i = T(f_i)=\mathbf{R}_{\geq 0}\cdot \alpha_i$ with $T_1<...< T_t$ and $T_1=\mathbf{R}_{\geq 0}(1,1)$ we obtain 
	$$H = H_{T_1}+...+H_{T_t}+H_{\overset{\circ}{\sigma_2}}+...+H_{\overset{\circ}{\sigma_{t+1}}}$$
	where $H_{T_i} = \frac{u^{|\alpha_i|}v^{\alpha_i\cdot S_i}}{1- u^{|\alpha_i|}v^{\alpha_i\cdot S_i}}$ and 
	$$H_{\overset{\circ}{\sigma_i}}=\frac{u^{|\alpha_i|}v^{\alpha_i\cdot S_i}}{1-u^{|\alpha_i|}v^{\alpha_i\cdot S_i}}\cdot \frac{u^{|\alpha_{i-1}|}v^{\alpha_{i-1}\cdot S_{i-1}}}{1-u^{|\alpha_{i-1}|}v^{\alpha_{i-1}\cdot S_{i-1}}}$$
	  for $2\leq i\leq t$ as well as 
	$$H_{\overset{\circ}{\sigma_{t+1}}}=\frac{1}{1-u^{|\alpha_{t}|}v^{\alpha_t\cdot S_{t+1}}}\sum_{i=1}^{p_t} u^{i+D_{i,t}+1}v^{i\cdot s_{t+1}}$$
	where $S_{t+1}=(s_{t+1},0)$,  $D_{0,t}=0$, $D_{i,t}=\sum_{j=1}^i d_{j,t}$  and the $d_i$'s again come from $SC(\alpha_{2,t}/\alpha_{1,t})=[[d_{1t},..., d_{p_t,t}]]$. 
	\item If $t>1$, then by setting $T_i =\mathbf{R}_{\geq 0}\cdot \alpha_i$ with $\mathbf{R}_{\geq 0}.(1,1)<T_1<...< T_t$ we get
	$$H=H_{T_1}+...+H_{T_t} + H_{\overset{\circ}{\sigma_1}}+...+H_{\overset{\circ}{\sigma_{t+1}}}$$
	with $H_{T_i} = \frac{u^{|\alpha_i|}v^{\alpha_i\cdot S_i}}{1-u^{|\alpha_i|}v^{\alpha_i\cdot S_i}}$, and  $H_{\overset{\circ}{\sigma_i}}$ is given by the previous case for all  $2\leq i\leq t+1$ as well as 
	$$H_{\overset{\circ}{\sigma_1}}=\frac{1}{1-u^{|\alpha_1|}v^{\alpha_1\cdot S_1}}\sum_{i=1}^{p_1}\sum_{j=D_{i-1,1}+1}^{D_{i,1}} u^{i+j}v^{js_1}$$
	where $S_1 = (0,s_1)$.
	\item If $t>1$ with $T_1<\mathbf{R}_{\geq 0} (1,1)<T_t$ then 
	$$H = H_{T_1}+...+H_{T_t}+H_{\overset{\circ}{\sigma_1}}+...+ H_{\overset{\circ}{\sigma_{t+1}}}$$ where
	$H_{T_i}$, $H_{\overset{\circ}{\sigma_i}}$ are as before for all $2\leq i\leq t+1$ and with
	$$H_{\overset{\circ}{\sigma_1}} = \frac{1}{1-u^{|\alpha_1|}v^{\alpha_1\cdot S_1}}\sum_{i=1}^{p_1} u^{i+D_{1,i}+1}v^{i\cdot s_1}$$
	where $D_{1,i} = \sum_{j=1}^i d_{1,j}$, $SC(\alpha_{1,1}/\alpha_{2,1})=[[d_{11},..., d_{p_1,1}]]$.
	\end{itemize}
\end{lemma}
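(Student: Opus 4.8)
\emph{Strategy.} The plan is to collapse the double sum defining $H$ into a sum over the lattice points swept out by the staircase walk $J_{SC}(f)$, and then to read off rationality from the self-similarity of that walk together with the piecewise-linearity of $\nu_{\bullet}(f)$ on the dual Newton fan. First, by Theorems \ref{th : decom} and \ref{th : embed}, the irreducible components of $\mathcal C^0_{m-1}$ with hyperplane coordinates are exactly the $\mathcal H^\beta_{m-1}$ with $\beta\in B^*_{m-1}$, and $\bigcup_m B^*_m$ is precisely the set of lattice points of $J_{SC}(f)$ (the removals in the definition of $B^*$ are only temporary: since $\mathcal C$ is singular $f$ has no monomial $x$ or $y$, so $\nu_{\alpha+(1,1)}(f)\ge \nu_\alpha(f)+2$ for $\alpha\in T(f)$ by Observations \ref{obs:branch} and \ref{obs : uv}, hence $\alpha+(1,1)$ re-enters $B^*$ at the next level). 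Now $\mathcal H^\beta_{m-1}$ yields a component of $\text{Cont}^m(\mathcal C)^0=\mathcal C^0_{m-1}\cap\{f^{(m)}\ne 0\}$ only if $f^{(m)}\not\equiv 0\bmod\mathcal H^\beta$, which within the range $m\le\nu_\beta(f)$ forced by $\beta\in B^*_{m-1}$ happens exactly at $m=\nu_\beta(f)$; and there it does occur, since $f^{(\nu_\beta(f))}\not\equiv 0\bmod\mathcal H^\beta$, so $\text{Cont}^{\nu_\beta(f)}(\mathcal C)^0\ne\varnothing$. As $\text{codim}(\mathcal H^\beta;\mathbf A^2_\infty)=|\beta|$, this gives the clean formula
$$H=\sum_{\beta\in J_{SC}(f)}u^{|\beta|}v^{\nu_\beta(f)}.$$

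\emph{Splitting by the dual fan.} Partition the vertices of $J_{SC}(f)$ by the cone of the dual Newton fan they lie in: the rays $T_i=\mathbf R_{\ge 0}\alpha_i$ with $\alpha_i$ primitive, and the two-dimensional cones $\sigma_i$, each dual to a vertex $S_i$ of the Newton polygon. On $T_i$ the walk hits exactly $\{s\alpha_i:s\ge 1\}$; since $\nu_{s\alpha_i}(f)=s(\alpha_i\cdot S_i)$ and $|s\alpha_i|=s|\alpha_i|$, summing a geometric series gives $H_{T_i}$. For an interior cone $\sigma_i=T_{i-1}+T_i$ ($2\le i\le t$) one shows the walk's vertices inside $\overset{\circ}{\sigma_i}$ are exactly $\{r\alpha_i+s\alpha_{i-1}:r,s\ge 1\}$: near each bounding ray the walk bounces off it, and the splitting rule fills precisely this sub-semigroup; using that $\nu_\bullet(f)=\bullet\cdot S_i$ on $\sigma_i$ and that $|\bullet|$ is linear there, a double geometric sum produces the product formula for $H_{\overset{\circ}{\sigma_i}}$.

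\emph{The boundary cones via continued fractions.} In the leftmost cone $\sigma_1$ and the rightmost cone $\sigma_{t+1}$ the walk is a genuine non-splitting staircase, so here we invoke the explicit description from Theorem \ref{walk} and Corollary \ref{th : equiv}. With $\mathrm{SC}(\alpha_2/\alpha_1)=[[d_1,\dots,d_p]]$ and $D_i=d_1+\cdots+d_i$, one period of the walk starting from $\lambda\in T\cap\mathbf N^2$ consists of the riser points $\lambda+(i,j)$ with $D_{i-1}+1\le j\le D_i$ together with the landing points $\lambda+(i,D_i+1)$ just before each horizontal step. Evaluating $|\lambda+(i,j)|=|\lambda|+i+j$ and $\nu_{\lambda+(i,j)}(f)=\nu_\lambda(f)+(i,j)\cdot S$, and summing the finite fundamental-domain contribution against the geometric series in $\lambda$, gives the stated closed forms for $H_{\overset{\circ}{\sigma_1}}$ and $H_{\overset{\circ}{\sigma_{t+1}}}$ in terms of the $D_i$.

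\emph{The five configurations and conclusion.} The precise shape of the answer depends on where $\mathbf R_{\ge 0}(1,1)$ sits relative to $T(f)$: it may equal $T(f)$ (forcing $t=1$, $p=q=1$, a trivial walk and $H=u^2v^s/(1-u^2v^s)$); or lie strictly below all rays, strictly above all rays, strictly between two rays, or coincide with some $T_i$. This determines whether the walk begins and/or ends with a plain vertical (resp. horizontal) segment along $x=1$ (resp. $y=1$), hence which of $H_{\overset{\circ}{\sigma_1}}$, $H_{\overset{\circ}{\sigma_{t+1}}}$ is of staircase type and which degenerates to a pure geometric series; assembling the pieces in each case gives the lemma, and since $H$ is then a finite sum of products of geometric series, $H\in\mathbf Z(u,v)$. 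Granting the collapse of the $m$-sum and the additivity $\nu_{\lambda+\gamma}(f)=\nu_\lambda(f)+\nu_\gamma(f)$ whenever $\lambda,\gamma$ lie in a common cone, rationality is essentially formal; the real work — and the main obstacle — is the combinatorics: proving that the $B^*$-trimmed walk visits exactly the claimed lattice set in each cone, in particular that inside $\overset{\circ}{\sigma_i}$ only the semigroup $\mathbf N_{\ge 1}\alpha_{i-1}+\mathbf N_{\ge 1}\alpha_i$ survives, and carrying the fundamental-domain bookkeeping cleanly through all five positions of $\mathbf R_{\ge 0}(1,1)$.
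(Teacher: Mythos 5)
Your strategy is the same as the paper's: collapse the $(K,m)$-sum to a single sum $\sum_{\beta}u^{|\beta|}v^{\nu_\beta(f)}$ over the lattice points of the staircase walk, partition that sum by the cones of the dual Newton fan, evaluate the ray contributions and the interior two-dimensional cones as (products of) geometric series, and use the fundamental domain supplied by Theorem \ref{walk} and Corollary \ref{th : equiv} for the two boundary cones, with the final case analysis governed by the position of $\mathbf{R}_{\geq 0}(1,1)$. Your justification of the collapse of the $m$-sum (a given $\mathcal{H}^\beta_\infty$ contributes only at $m=\nu_\beta(f)$) and your remark that the removals defining $B^*_m$ are transient are correct and are in fact spelled out more carefully than in the paper, whose proof simply declares $H_{T_i}$ and $H_{\overset{\circ}{\sigma_i}}$ to be the sums over the corresponding lattice sets.

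The genuine gap is the step you explicitly defer: identifying $B^*\cap\overset{\circ}{\sigma_i}$ with the claimed lattice sets. For the two boundary cones this does follow from Theorem \ref{walk} (your riser/landing bookkeeping matches the sets $U$ and $V$ appearing in the paper's proof, up to the same endpoint conventions), so deferring it there is benign. But for an interior cone $\sigma_i=T_{i-1}+T_i$ with $2\leq i\leq t$, the assertion that the walk visits exactly $\{r\alpha_i+s\alpha_{i-1}:r,s\geq 1\}$ is not something ``one shows'' in general: it presupposes that $(\alpha_{i-1},\alpha_i)$ is a unimodular pair. When $\det(\alpha_{i-1},\alpha_i)>1$ the splitting rule produces, and $B^*$ genuinely contains, interior lattice points outside that sub-semigroup. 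For instance, for $f=(y^2-x^3)(y^3-x^2)$ (Figure \ref{walk:T(f)}) the points $(1,1),(2,2),(3,3),(3,4),(4,3),\dots$ all lie in $\overset{\circ}{\sigma_2}$ and contribute $u^2v^4+u^4v^8+\cdots$ to $H$, whereas $\mathbf{N}_{\geq 1}(3,2)+\mathbf{N}_{\geq 1}(2,3)$ only begins at $(5,5)$; so the heuristic ``the splitting rule fills precisely this sub-semigroup'' does not close. Either a unimodularity hypothesis is needed, or $H_{\overset{\circ}{\sigma_i}}$ must be computed as the sum over all of $\overset{\circ}{\sigma_i}\cap\mathbf{N}^2$ (a finite union of translates of the sub-semigroup, so rationality survives even though the displayed product formula does not). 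Be aware that the paper's own proof asserts the same product formula without justification, so this is a gap you share with the source rather than one you introduce.
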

\begin{proof}
The first part comes from the fact that $H$ has the decomposition:
$$H = \sum_{i=1}^t H_{T_i} +\sum_{i=1}^{t+1} H_{\overset{\circ}{\sigma_i}}$$
where $H_{T_i}$ is the sum over the lattice points which lie on $T_i\cap \mathbf{N}^2$ and $H_{\overset{\circ}{\sigma_i}}$ is the sum over the lattice points which lie in the interior of the cone $\sigma_i = T_{i-1} + T_{i}$. More explicitly,
$$H_{T_i} = \sum_{s=1}^\infty u^{s|\alpha_i|}v^{s\alpha_i\cdot S_i} = \frac{u^{|\alpha_i|}v^{\alpha_i\cdot S_i}}{1-u^{|\alpha_i|}v^{\alpha_i\cdot S_i}}$$	and for $2\leq i\leq t$, 
$$H_{\overset{\circ}{\sigma_i}} = \sum_{r,s\geq 1} u^{r|\alpha_i|+s|\alpha_{i-1}|}v^{(r\alpha_i+s\alpha_{i-1})\cdot S_i} = \frac{u^{|\alpha_i|}v^{\alpha_i\cdot S_i}}{1-u^{|\alpha_i|}v^{\alpha_i\cdot S_i}} \cdot \frac{u^{|\alpha_{i-1}|}v^{\alpha_{i-1}\cdot S_{i-1}}}{1-u^{|\alpha_{i-1}|}v^{\alpha_{i-1}\cdot S_{i-1}}}$$
as $\alpha_{i-1}\cdot S_i = \alpha_{i-1}\cdot S_{i-1}$ for $\alpha_i\in \sigma_i$. For $H_{\overset{\circ}{\sigma_1}}$ and $H_{\overset{\circ}{\sigma_{t+1}}}$, there are multiple cases depending on whether $T(f_1)\geq \mathbf{R}_{\geq 0}(1,1)$ or $T(f_1)<\mathbf{R}_{\geq 0}(1,1)<T(f_t)$. 
\begin{itemize}
    \item If $T(f_1)=\mathbf{R}_{\geq 0}(1,1)$, then $H_{\overset{\circ}{\sigma}_1} = 0$.
    \item If on the other hand $T(f_1)>\mathbf{R}_{\geq 0}(1,1)$ and $t=1$, then choose $\alpha=(p,q)$ as the generator of the tropical variety of $f$ such that $\gcd(p,q)=1$ and  $SC(q/p) = [[d_1,..., d_p]]$. In addition,
    $$\overset{\circ}{\sigma}_1\cap B^* = \{\lambda+u : \lambda\in T(f)\cap \mathbf{N}^2_0, u\in U\}$$
    where $U = \{(1,1),..., (1,d_1), (2,d_1+1),..., (2,d_1+d_2),..., (p,d_1+...+d_{p-1}+1),..., (p,q-1)\}$ and we have
    $$H_{\overset{\circ}{\sigma}_1} = \sum_{\lambda\in T(f)\cap \mathbf{N}_0^2} \:\ \sum_{\nu\in U} u^{|\lambda|+|\nu|}v^{[\lambda + \nu]\cdot S_1}$$
    where $S_1=(0,s_1)$ is the first vertex of the Newton polygon of $f$. As for $H_{\overset{\circ}{\sigma}_2}$, the following holds
    $$H_{\overset{\circ}{\sigma}_2} = \sum_{\lambda\in T(f)\cap \mathbf{N}_0^2} \:\ \sum_{\nu\in V} u^{|\lambda|+|\nu|}v^{[\lambda + \nu]\cdot S_2}$$
    with $V = \{(1,d_1+1), (2,d_1+d_2+1),...,(p-1, d_1+...+d_{p-1}+1)\}$.
    \item If $T(f_1)>\mathbf{R}_{\geq 0}(1,1)$ and $t>1$, then $H_{\overset{\circ}{\sigma}_1}$ (resp. $H_{\overset{\circ}{\sigma}_{t+1}}$) can be computed in the same way as $H_{\overset{\circ}{\sigma}_1}$ (resp. $H_{\overset{\circ}{\sigma}_2}$) in the previous case but with replacing $T(f)$ by $T(f_1)$ (resp. by $T(f_t)$) and $SC(q/p)$ by $SC(\alpha_{1,2}/\alpha_{1,1})$ (resp. $SC(\alpha_{t,2}/\alpha_{t,1})$).
    \item If $T(f_1) < \mathbf{R}_{\geq 0}(1,1)< T(f_t)$, then $H_{\overset{\circ}{\sigma}_{t+1}}$ can be found in the same way  as $H_{\overset{\circ}{\sigma}_{t+1}}$ in the previous case too. As for $H_{\overset{\circ}{\sigma}_1}$ it is given by:
    $$H_{\overset{\circ}{\sigma}_1} = \sum_{\lambda\in T(f)\cap \mathbf{N}_0^2} \:\ \sum_{u'\in U'} u^{|\lambda|+|u'|}v^{[\lambda + u']\cdot S_1}$$ where
    $U'$ is the same set as $U$ but where axes and ordinates of each lattice point are flipped. 
\end{itemize}
Now a simple check shows that each of $H_{\overset{\circ}{\sigma}_1}$ and $H_{\overset{\circ}{\sigma}_{t+1}}$ are given  as in the statement of the theorem. 
Finally, observe  that the case where $T(f_1)>T(f_t)$ can not occur since we have assumed that
$$f=f_1...f_t$$ 
is an oriented tropical decomposition, and  the case $T(f_1)\leq T(f_t)\leq\mathbf{R}_{\geq 0}(1,1)$ can be dealt with by  flipping to $\mathbf{R}_{\geq 0}(1,1)\leq T(g_1)\leq T(g_t)$
where $g_i(x,y) = f_i(y,x)$ for all $i$.


\end{proof} 
Our next goal is to show that  for a  Newton non-degenerate $f$, $G=G(f)$ is rational and we shall compute its  poles. Recall that $-\frac{b}{a}$ with $\gcd(a,b)=1$ is a \emph{pole} of the generating series $G$ of $f$ if we can write $G = \frac{N}{D}$ with $N,D \in \mathbf{Z}[u,v]$, $\gcd(N,D)=1$ and $1-u^av^b$ divides $D$. 

First, we start with the following lemmas which will be used in the proof of Theorem \ref{th : fin}.

\begin{lemma} \label{lm : pole}
    Let $G=H+R$ be a sum of $H,R\in \mathbf{Z}[u,v]$ such that $p$ is a pole of $R$ but not a pole of $H$. Then $p$ is a pole of $G$.
\end{lemma}
\begin{proof}
    Let $p=-\frac{b}{a}$ with $\gcd(a,b)=1$, and denote by $H = \frac{N_H}{D_H}$ and $R = \frac{N_R}{D_R}$ with $\gcd(N_H,D_H)=\gcd(N_R,D_R)=1$. Since $p$ is a pole of $R$  but not a pole of $H$, then $1-u^av^b$ divides $D_R$ and $1-u^av^b$ does not divide $D_H$. Now by writing $D_R = (1-u^av^b)\Delta_R$, then
    $$G = H+R = \frac{N_HD_R+N_RD_H}{(1-u^av^b)\Delta_R D_H}.$$
    If $p$ is not a pole of $G$, then $1-u^av^b$ divides $N_HD_R+N_RD_H$, which in turn means that $1-u^av^b$ divides $N_RD_H$. However, $1-u^av^b$ is irreducible in $\mathbf{Z}[u,v]$ since $\gcd(a,b)=1$. Since $p$ is not a pole of $H$, then $\gcd(1-u^av^b, D_H)=1$. Therefore, $1-u^av^b$ divides $N_R$. So that $1-u^av^b$ divides both $N_R$, $D_R$ which are relatively prime, which is a contradiction. Thus, the proof is complete.
\end{proof}

\begin{lemma} \label{lm : unit}
    In $R = \mathbf{Z}[w,w^{-1}]$, the units are $R^\times = \pm w^{\mathbf{Z}}$.
\end{lemma}
\begin{proof}
    First note that $\pm w^\mathbf{Z} \subseteq R^\times $. Let $f\in R^\times$ and denote by $f'$ its inverse in $R$. Also, let
    $f = \sum_{k=m}^M f_kw^k$ and $f' = \sum_{i=m'}^{M'} f'_iw^i$. Since $f'f=1$, then $m+m' = M +M'=0$ which can be seen from comparing the least and largest exponents of $f'f=1$. Moreover, $f$ must have one exponent $f=f_nw^n$ because otherwise there would be at least two distinct exponents in the product. As $f_n\in \mathbf{Z}^\times =\{-1,1\}$, then $f=\pm w^n$ with $n\in \mathbf{Z}$. This concludes our proof.
\end{proof}

Now, we end with our final theorem concerning the poles of the generating series.

\begin{theorem} \label{th : fin}
Let $f$ define a Newton non-degenerate plane curve singularity. Denote by $G$ its generating series. Then, $G=H+R$ (from the previous lemmas) is in $\mathbf{Z}(u,v)$ and its poles are given by:
$$\{-1\}\cup \{\frac{-\alpha_l\cdot S_l}{|\alpha_l|}:1\leq l\leq t\}.$$
\end{theorem}
\begin{proof}
Recall first that $\alpha_i\cdot S_i = \nu_{\alpha_i}(f)$. Now $R = \frac{u}{1-uv} \sum_{i=1}^t r_iH_{T_i}$ where $H_{T_i} = \frac{u^{|\alpha_i|}v^{\nu_{\alpha_i}(f)}}{1-u^{|\alpha_i|}v^{\nu_{\alpha_i}(f)}}$, and  $H = \sum_{i=1}^t H_{T_i} + \sum_{i=1}^{t+1}H_{\overset{\circ}{\sigma_i}}$ where for $2\leq i\leq t$, $H_{\overset{\circ}{\sigma_i}} = H_{T_i}H_{T_{i-1}}$ and for $j\in \{1,t+1\}$, set $\alpha_t = \alpha_{t+1}$ and write
    $$H_{\overset{\circ}{\sigma_j}} = \frac{P_j(u,v)}{1-u^{|\alpha_j|}v^{\nu_{\alpha_j}(f)}}$$
    where $P_j\in \mathbf{C}[u,v]$. Then, the set of poles $\mathcal{P}$ of $G$ verifies 
    $$\mathcal{P}\subseteq \{-1\}\cup \{\frac{-\alpha_l\cdot S_l}{|\alpha_l|}:1\leq l\leq t\}.$$
    \begin{itemize}
        \item If $T(f) = \mathbf{R}_{\geq 0}(1,1)$ with $f$ having two branches, then choose coordinates $(x,y)$ such that
        $$f=(y-x+...)(y-cx+...).$$
        Then $s=2$ in
        $$G = \frac{u^2v^s}{1-u^2v^s}+\frac{2u}{1-uv}\cdot\frac{u^2v^2}{1-u^2v^2}=\frac{u^2v^2(1-uv+2u)}{(1-u^2v^2)(1-uv)}$$
        so that $G$ has $-1$ as the only pole. 
        \item Otherwise,  if also $(1,1)\notin supp(f)$, then by Observation \ref{obs : uv},  $\nu_\alpha(f)>|\alpha|$ for all $\alpha\in T(f)$. Hence, 
        $$G=H+R$$ with $R$ having $-1$ as a pole. In fact, 
        $$R = \frac{u}{1-uv}\sum_{i=1}^t \frac{ r_i\cdot u^{|\alpha_i|}v^{\alpha_i\cdot S_i}}{1-u^{\alpha_i}v^{\alpha_i\cdot S_i}}$$
        which after performing computation leads to
        $$R=\frac{u\sum_{i=1}^t r_i\cdot u^{|\alpha_i|}v^{\alpha_i\cdot S_i}\prod_{j\neq i} (1-u^{|\alpha_j|}v^{\alpha_j\cdot S_j})}{(1-uv)\prod_{l=1}^t (1-u^{|\alpha_l|}v^{\alpha_l\cdot S_l})}.$$
        Thus, $-1$ is a pole of $R$ since $1-uv$ does not divide its numerator.
As for $I = H-H_{\overset{\circ}{\sigma}_1}-H_{\overset{\circ}{\sigma}_{t+1}}$, it does not have $-1$ as a pole since after a simple check we can write
$$I = \frac{q_1\prod_{j=2}^t (1-q_j)+\sum_{i=2}^t q_i\prod_{j\neq i,i-1}(1-q_j)}{\prod_{l=1}^t (1-q_l)}$$
where $q_j = u^{|\alpha_j|}v^{\alpha_j\cdot S_j}\neq u^kv^k$ for all $k$ in the case of $I$. As for $H_{\overset{\circ}{\sigma_1}}$ and $H_{\overset{\circ}{\sigma_{t+1}}}$ they are of the form
$$\frac{P(u,v)}{1-u^{|\alpha|}v^{\nu_\alpha(f)}}$$
of course with $|\alpha|<\nu_\alpha(f)$ so that  $-1$ is not a pole for both. This in turn means that $H$ does not have $-1$ as a pole and since $R$ has $-1$ as a pole, then $G$ has $-1$ as a  pole in view of Lemma \ref{lm : pole}. As for $\frac{-\alpha_l\cdot S_l}{|\alpha_l|}\neq -1$ where $1\leq l\leq t$, then we can write

$$G = I+\frac{P_1}{1-q_1}+\frac{P_{t+1}}{1-q_{t+1}}+\sum_{i=1}^t\frac{r_iq_iu}{1-q_i},$$

$$I = \frac{N_I}{\prod_{j=1}^t (1-q_j)} = \frac{q_1}{1-q_1}+\sum_{i=2}^t \frac{q_i}{(1-q_i)(1-q_{i-1})}.$$
Set $L = \{i\in [1,t+1] : \frac{\alpha_l\cdot S_l}{|\alpha_l|} = \frac{\alpha_i\cdot S_i}{|\alpha_i|}\}$.
Now there exists $\frac{-b}{a} = \frac{-\alpha_l\cdot S_l}{|\alpha_l|}$ with $\gcd(a,b)=1$ so that we can write
$$l\in L = \{i\in [1,t+1] : 1-u^av^b \text{ divides }1-q_i\} = \{i_1<...<i_\lambda\}.$$ where $\lambda = \#L$. Next, we shall set $$N = (P_1+q_1)\prod_{k=2}^{t+1}(1-q_k)+\sum_{i=2}^t q_i\prod_{k\neq i,i-1} (1-q_k)+P_{t+1}\prod_{k=1}^t (1-q_k)+\sum_{i=1}^t r_iq_iu\prod_{k\neq i} (1-q_k),$$
$D = \prod_{k=1}^{t+1}(1-q_k)$ and keep in mind that  $G = \frac{N}{D}$. Next, we consider the following cases:
\begin{itemize}
    \item[$\blacksquare$] if $i_1\neq 1$, then we write 
    $$D = (1-u^av^b)^\lambda \Delta$$
    where $1-u^av^b \nmid \Delta$. We have
    $$N \equiv r_lu\prod_{k\neq l} (1-q_k)+\prod_{k\neq l-1,l} (1-q_k)+q_{l+1}\prod_{k\neq l,l+1} (1-q_k)\mod (1-u^av^b).$$
    There exist $\varepsilon_1,\varepsilon_2\in\{0,1\}$, $\Delta_i\neq 0 \mod (1-u^av^b)$ for $i\in \{l-1,l,l+1\}$ such that, 
    $$N\equiv r_lu(1-u^av^b)^{\lambda-1}\Delta_l+(1-u^av^b)^{\lambda-1-\varepsilon_1}\Delta_{l-1}+(1-u^av^b)^{\lambda-1-\varepsilon_1}\Delta_{l+1}$$
    modulo $1-u^av^b$. Since $\lambda-1-\varepsilon_i<\lambda$, we get that $G$ has $-\frac{b}{a}$ as a pole.

    \item[$\blacksquare$] If $i_1=1$, $2\leq l\leq t$, then
    $$N\equiv q_2\prod_{k\neq 1,2}(1-q_k)\mod (1-u^av^b)$$
    but as in the previous case,  this becomes
    $$N\equiv (1-u^av^b)^{\lambda-1-\varepsilon}\Delta_{1,2} \mod (1-u^av^b)$$
    where $\Delta_{1,2}$ is not divisible by $1-u^av^b$. Since $\lambda-1-\varepsilon<\lambda$, then $-b/a$ is again a pole of $G$.
   \item[$\blacksquare$] If $\lambda>1$ and  $i_1=l=1$, then
   $$N\equiv q_2\prod_{k\neq 1,2}(1-q_k) \equiv q_2(1-u^av^b)^{\lambda-2}\Delta_2\mod (1-u^av^b)$$
   which implies as before that $G$ has $-b/a$ as a pole.
   \item[$\blacksquare$] If $\lambda=i_1=l=1$, then
   $$N\equiv (P_1+1+r_1u)\prod_{k=2}^{t+1}(1-q_k)+q_2\prod_{k\neq 1,2}(1-q_k)\mod (1-u^av^b)$$
   $$ \ \equiv \prod_{k\neq 1,2}(1-q_k)\cdot [q_2+(1-q_2)(P_1+1+r_1u)] \mod (1-u^av^b)$$
   $$ \ \  \ \equiv (1-u^av^b)^{\lambda-1}\Delta'\cdot [q_2+(1-q_2)(P_1+1+r_1u)] \mod (1-u^av^b)$$
   with $1-u^av^b\nmid  \Delta'$. Suppose by contradiction that $$q_2 \equiv (q_2-1)(P_1+1+r_1u) \mod \mathfrak{p}$$ (where $\mathfrak{p} =  (1-u^av^b)$). In the quotient ring $\mathbf{Z}[u,v]/\mathfrak{p}$, we have 
   $$q_2 = (q_2-1)(P_1+1+r_1u)$$
   so that $q_2-1\mid q_2$. However, in this case
   $$(q_2-1) = (q_2, q_2-1) = \mathbf{Z}[u,v]/\mathfrak{p}=\mathbf{Z}[v,v^{-b/a}].$$
   Hence, $1-q_2 = u^{|\alpha_2|}v^{\alpha_2\cdot S_2}$ is a unit in the quotient ring.  
   Moreover, 
   $$\mathbf{Z}[v,v^{-b/a}] = \sum_{r,s\in \mathbf{N}_0} \mathbf{Z}\cdot v^{\frac{1}{a}(ra-sb)}.$$
   On the other hand, by B\'ezout's theorem, there exist $r,s\in \mathbf{N}_0$ such that $ra-sb = \pm 1$. 
   \begin{itemize}
       \item if $ra-sb=1$ then $\mathbf{Z}[v,v^{-b/a}]=\mathbf{Z}[v^{1/a}]$ and $v^{b}$ is not invertible which is a  contradiction.
       \item if $ra-sb=-1$ then $\mathbf{Z}[v,v^{-b/a}]=\mathbf{Z}[v,v^{-1/a}]$. Next if we  set $w=v^{1/a}$, then we get $u^aw^{ab}=1$ and
       $$\mathbf{Z}[v,v^{-1/a}]=\mathbf{Z}[w^a,w^{-1}]=\mathbf{Z}[w,w^{-1}].$$
       \end{itemize}
   Now $1-q_2$ is a unit in the ring $\mathbf{Z}[w,w^{-1}]$ so that $q_2 = 1+\varepsilon w^n$ with $\varepsilon = \pm1$, $n=-b|\alpha_2|+a(\alpha_2\cdot S_2)\in \mathbf{Z}$ by making use of Lemma \ref{lm : unit}. So in $\mathbf{Z}[u,v]/\mathfrak{p} = \mathbf{Z}[w,w^{-1}]$:
    $$q_2+(1-q_2)(P_1+1+r_1u)= 0, \text{ hence }$$
    $$P_1+1+r_1u = \varepsilon w^{-n}(1+\varepsilon w^n)=1+\varepsilon w^{-n},$$ which gives that
    $$P_1 = \varepsilon w^{-n}-r_1u.$$
    As $u^aw^{ab}=1$ then $u = \varepsilon_u w^{-b}$  with $\varepsilon_u=\pm 1$. So we arrive at,
    $$P_1  =\sum_{\nu\in U_1} u^{|\nu|}v^{\nu\cdot S_1}=\sum_\nu (\varepsilon_u)^{|\nu|} w^{a(\nu\cdot S_1)-b|\nu|}$$
    where $U_1\subset \mathbf{N}^2$ is a set of lattice points. 
Now  $$\varepsilon w^{-n}-r_1\varepsilon_uw^{-b}=P_1=\sum_\nu (\varepsilon_u)^{|\nu|} w^{a(\nu\cdot S_1)-b|\nu|}\in \mathbf{Z}[w,w^{-1}]$$ and
$1-u^av^b = 1-\varepsilon_u w^{-ab+ab}=1-\varepsilon_u$. Then $\varepsilon_u = 1$ because $1-u^av^b = 0\in \mathbf{Z}[w,w^{-1}]$. Next, we split the 2 cases: $P_1\neq 0$ and $P_1=0$.
 \begin{enumerate}
    \item if $P_1\neq 0$ then $\alpha_1\neq (1,1)$ and there exists $\beta\in U_1$ with
    $$a(\beta\cdot S_1)-b|\beta|=-b,$$
    $$\frac{\alpha_1\cdot S_1}{|\alpha_1|}=b/a=\frac{\beta\cdot S_1}{|\beta|+1}.$$
 As $-b=a(\beta\cdot S_1)-b|\beta|$, then
 $$a(\beta\cdot S_1)<b|\beta|<b(1+|\beta|),$$
 $$b/a=\frac{\beta\cdot S_1}{1+|\beta|}<\frac{b}{a}\cdot \frac{|\beta|}{1+|\beta|}<b/a$$ which is clearly
 a contradiction.
 
\item if $P_1=0$ then $\varepsilon = r_1 = 1$, $b=n$ and $\alpha_1=(1,1)$ so that
$$uq_2 = u(1+w^n) = u(1+w^b)=u(1+u^{-1}) = 1+u$$
in the quotient ring. Then, 
$$uq_2 = 1+u-(1-u^av^b)Q,$$
for some $\mathbf{Z}[u,v]\ni Q\equiv 1 \mod u$. If we  write $Q=1+uU$, then
$$u^{|\alpha_2|}v^{\alpha_2\cdot S_2}=1-q_2=(1-u^av^b)U-u^{a-1}v^b.$$
Let us set $i=|\alpha_2|$ and $j=\alpha_2\cdot S_2$ in 
    $$u^iv^j+u^{a-1}v^b = w^{aj-bi}+w^{b}\neq 0.$$
    Since $1-u^av^b=0\in \mathbf{Z}[w,w^{-1}]$ and $U\in \mathbf{Z}[u,v]$ then we must have $u^iv^j+u^{a-1}v^b =0\in \mathbf{Z}[w,w^{-1}]$ which  is not the case. Hence we arrived at a  contradiction. 
 \end{enumerate}
   Therefore, in all cases, $1-u^av^b$ does not divide $q_2+(1-q_2)(P_1+1+r_1u)$. Thus, $N\equiv (1-u^av^b)^{\lambda-1}\Delta'' \mod (1-u^av^b)$ with $1-u^av^b \nmid \Delta''$. So that $G$ has $-b/a$ as a pole.

\end{itemize}

        \item Finally, if $(1,1)\in supp(f)$ then the Newton Polygon of $f$ has two faces (i.e. $t=2$) with $(1,1)$ as a middle vertex. Moreover, $f$ has two branches since $(1,1)\in supp(f)$ and for all $\alpha\in T(f)$, $\nu_\alpha(f)=|\alpha|$. So $\alpha$ and $S=(1,1)$ should correspond to the pole $-1=\frac{-\alpha\cdot S}{|\alpha|}$. Since $T(f) = \mathbf{R}_{\geq 0}\cdot \alpha\cup \mathbf{R}_{\geq 0}\cdot \beta$ where $\alpha=(a,1)$, $\beta=(1,b)$  (as $(1,1)\in supp(f)$), then
        $$R = (1+\frac{u}{1-uv})(\frac{u^{|\alpha|} v^{|\alpha|}}{1-u^{|\alpha|}v^{|\alpha|}}+\frac{u^{|\beta|} v^{|\beta|}}{1-u^{|\beta|}v^{|\beta|}})$$
        and according to Lemma \ref{lm : sum},
        $$H = \frac{(uv)^{|\alpha|+|\beta|}}{(1-u^{|\alpha|}v^{|\alpha|})(1-u^{|\beta|}v^{|\beta|})}+\frac{u^{|\alpha|}v^{|\alpha|}+u^{1+|\alpha|}v^{s_1}}{1-u^{|\alpha|}v^{|\alpha|}}+\frac{u^{|\beta|}v^{|\beta|}+u^{1+|\beta|}v^{s_3}}{1-u^{|\beta|}v^{|\beta|}}$$
        where $S_1=(0,s_1)$ and $S_3=(s_3,0)$ are the first and last vertex of the Newton polygon of $f$. This shows that the pole of $G$ is $-1=\frac{-\alpha\cdot S}{|\alpha|}=\frac{-\beta\cdot S}{|\beta|}$ because $G\in \mathbf{Z}(u,v)\backslash \mathbf{Z}[u,v]$.
    \end{itemize}
    This concludes the proof.
\end{proof}

\bibliographystyle{plain}
\bibliography{bib}

\end{document}